\begin{document}


\newfont{\ef}{eufm10 scaled\magstep1}
\newfont{\efs}{eufm8}
\newfont{\bbl}{msbm10 scaled\magstep1}
\newfont{\cl}{cmsy10 scaled\magstep1}
\newfont{\cls}{cmsy8}

%
\newcommand{\St}{\mbox{\ef T}}
\newcommand{\XG}{\mbox{\ef X}}
\newcommand{\CG}{\mbox{\ef C}}
\newcommand{\DG}{\mbox{\ef D}}
\newcommand{\UG}{\mbox{\ef U}}
\newcommand{\UGs}{\mbox{\efs U}}
\newcommand{\LG}{\mbox{\ef L}}
\newcommand{\MG}{\mbox{\ef M}}
\newcommand{\NGG}{\mbox{\ef N}}
\newcommand{\PG}{\mbox{\ef P}}
\newcommand{\WG}{\mbox{\ef W}}
\newcommand{\XGG}{\mbox{\ef X}}
\newcommand{\XGGs}{\mbox{\efs X}}
\newcommand{\Zh}{\mbox{\bbl Z}}
\newcommand{\Nat}{\mbox{\bbl N}}
\newcommand{\Rd}{\mbox{\cl R}}
\newcommand{\Gh}{\mbox{\cl G}}
\newcommand{\Ghs}{\mbox{\cls G}}
\newcommand{\HGh}{\mbox{\cl H}}
\newcommand{\HGhs}{\mbox{\cls H}}
\newcommand{\Sy}{\mbox{\cl S}}
\newcommand{\Sys}{\mbox{\cls S}}
\newcommand{\Ty}{\mbox{\cl T}}
\newcommand{\Tys}{\mbox{\cls T}}
\newcommand{\Nc}{\mbox{\cl N}}
\newcommand{\Wsy}{\mbox{\cl W}}
\newcommand{\Vt}{\mbox{\cl V}}
\newcommand{\Vts}{\mbox{\cls V}}
\newcommand{\Ed}{\mbox{\cl E}}
\newcommand{\Bs}{\mbox{\cl B}}
\newcommand{\Ms}{\mbox{\cl M}}
\newcommand{\Ds}{\mbox{\cl D}}

\newtheorem{theorem}{Theorem}[section]
\newtheorem{lemma}[theorem]{Lemma}
\newtheorem{proposition}[theorem]{Proposition}
\newtheorem{corollary}[theorem]{{\bf Corollary}}

\theoremstyle{definition}
\newtheorem{definition}[theorem]{Definition}
\newtheorem{example}[theorem]{Example}
\newtheorem{xca}[theorem]{Exercise}
\newtheorem{remark}[theorem]{Remark}

\theoremstyle{remark}
\newtheorem{claim}[theorem]{Claim}


\newcommand{\R}{\mathbb{R}}
\newcommand{\C}{\mathbb{C}}
\newcommand{\N}{\mathbb{N}}
\newcommand{\Q}{\mathbb{Q}}
\newcommand{\Z}{\mathbb{Z}}
\newcommand{\F}{\mathbb{F}}


\newcommand{\tvec}[2]{\left(\begin{array}{c}{#1}\\{#2}\end{array}\right)}
\newcommand{\hvec}[3]{\left(\begin{array}{c}{#1}\\{#2}\\{#3}\end{array}\right)}
\newcommand{\fvec}[4]{\left(\begin{array}{c}{#1}\\{#2}\\{#3}\\{#4}\end{array}\right)}
\newcommand{\tma}[4]{\left(\begin{array}{cc} {#1} & {#2} \\ {#3} & {#4} \end{array}\right)}
\newcommand{\hma}[9]{\left(\begin{array}{ccc} {#1} & {#2} & {#3} \\ {#4} & {#5} & {#6} \\ {#7} & {#8} & {#9} \end{array}\right)}


\newcommand{\bi}[2]{\binom{{#1}}{{#2}}}
\newcommand{\stir}[2]{\left\{\begin{array}{c}{#1}\\{#2}\end{array}\right\}}

\newcommand{\fun}[5]{\begin{array}{cccc}{#1}:&{#2}&\to&{#3}\\&{#4}&\mapsto&{#5}\end{array}}

\newcommand{\abs}[1]{\left|{#1}\right|} 
\newcommand{\brac}[1]{\left[{#1}\right]} 
\newcommand{\set}[1]{\left\{{#1}\right\}} 
\newcommand{\setof}[2]{\left\{{#1}\,:\,{#2}\right\}}
\newcommand{\pr}[1]{\left({#1}\right)} 
\newcommand{\piece}[4]{\left\{\begin{array}{cc} {#1} & \textrm{if }{#2} \\ {#3} & \textrm{if }{#4} \end{array}\right.}
\newcommand{\otherpiece}[3]{\left\{\begin{array}{cc} {#1} & \textrm{if }{#2} \\ {#3} & \textrm{otherwise} \end{array}\right.}
\newcommand{\norm}[1]{\left\Vert{#1}\right\Vert}
\newcommand{\ceil}[1]{\left\lceil{#1}\right\rceil}
\newcommand{\floor}[1]{\left\lfloor{#1}\right\rfloor}
\newcommand{\ip}[2]{\left\langle{#1},{#2}\right\rangle}

\renewcommand{\b}[1]{\mathbf{{#1}}}
\newcommand{\m}[1]{\mathcal{{#1}}}
\newcommand{\mb}[1]{\mathhbb{{#1}}}

\renewcommand{\t}[1]{\textnormal{{#1}}}
\newcommand{\tr}[1]{\textrml{{#1}}}
\newcommand{\num}[1]{\#(\textnormal{{#1}})}


\newcommand{\dis}{\displaystyle}
\newcommand{\hor}{\line(1,0){500}}
\newcommand{\np}{\newpage}
\newcommand{\q}{\quad}
\newcommand{\qq}{\qquad}


\newcommand{\sub}{\subseteq}
\newcommand{\nsub}{\subsetneq}

\newcommand{\del}{\bigtriangledown}
\renewcommand{\P}{\mathbb{P}}
\newcommand{\E}{\mathbb{E}}
\newcommand{\V}{\mathbb{V}}
\newcommand{\I}{\mathbb{I}}
\newcommand{\Cov}{\textrm{Cov}}
\newcommand{\Var}{\textnormal{Var}}

\newcommand{\congp}{\equiv_p}
\newcommand{\congf}{\equiv_4}
\newcommand{\iso}{\cong}

\newcommand{\ops}{\overset{ops}{\longleftrightarrow}}
\newcommand{\egf}{\overset{egf}{\longleftrightarrow}}
\newcommand{\dir}{\overset{Dir}{\longleftrightarrow}}

\newcommand{\sm}{\setminus}


\renewcommand{\d}{\delta}
\newcommand{\e}{\epsilon}
\renewcommand{\O}{\Omega}
\renewcommand{\o}{\omega}
\renewcommand{\l}{\lambda}
\newcommand{\g}{\gamma}
\newcommand{\D}{\Delta}


\newcommand{\cd}{\cdots}
\newcommand{\ld}{\ldots}
\newcommand{\vd}{\vdots}
\newcommand{\dd}{\ddots}
\newcommand{\cdpt}{\cdot}


\newcommand{\K}{K_{\d,n-\d}}
\newcommand{\s}{\sum}
\newcommand{\n}{[n]}






\title{Non-noetherian groups and primitivity of their group algebras}


\author[J.~Alexander]{James Alexander}
\author[T.~Nishinaka]{Tsunekazu Nishinaka}

\address[J.~Alexander]{
\begin{flushleft}
\hspace{0.3cm} Department of Mathematics\\
\hspace{0.3cm} University of Delaware\\
\hspace{0.3cm} Newark, DE 19716, United States \\
\end{flushleft}
}
\email{jamesja@udel.edu}

\address[T.~Nishinaka]{
\begin{flushleft}
\hspace{0.3cm} University of Hyogo\\
\hspace{0.3cm} 8-2-1 Gakuen Nishimachi Nishiku \\
\hspace{0.3cm} Kobe-City 651-2197, Japan
\end{flushleft}
}
\email{nishinaka@econ.u-hyogo.ac.jp}

\thanks{The second author partially supported by Grants-in-Aid for Scientific Research under grant no. 26400055}

\keywords{
primitive group ring, one relator group, amalgamated free product,
HNN extension, two-edge coloured graph}

\subjclass[2000]{16S34, 20C07, 20E25, 20E06, 05C15}

\maketitle

\begin{abstract}
We prove that the group algebra 
$KG$ of a group $G$ over a field $K$ is primitive,
provided that $G$ has a non-abelian free subgroup with the same cardinality as $G$,
and that $G$ satisfies the following condition 
$(\ast)$:
for each subset $M$ of $G$ consisting of a
finite number of elements not equal to $1$, and for any positive integer $m$,
there exist distinct $a$, $b$, and $c$ in $G$
so that
if 
$(x_{1}^{-1}g_1x_{1})
\cdots (x_{m}^{-1}g_mx_{m})=1$, where $g_i$ is in $M$ and $x_i$ is equal to $a$, $b$, or $c$ for all $i$ between $1$ and $m$, then $x_{i}=x_{i+1}$ for some $i$. This generalizes results of \cite{Bal}, \cite{For}, \cite{Ni07}, and \cite{Ni11}, and proves that, for every countably infinite group $G$ satisfying $(\ast)$, $KG$ is primitive for any field $K$. We use this result to determine the primitivity of group algebras of one relator groups with torsion. 
\end{abstract}




\section{Introduction}
\label{int}

A ring $R$ is said to be (right) primitive if it contains a faithful irreducible (right) $R$-module,
or equivalently, if  there exists a maximal (right) ideal in $R$ which includes no non-trivial ideal of $R$. The main purpose of this work is to determine, as generally as possible, for which fields $K$ the  group algebra $KG$ of a non-noetherian group $G$ is primitive. 

The study of the primitivity of general group algebras has been a topic of much interest over the last few decades. In 1978, by a series of studies by 
Domanov \cite{Dom78}, Farkas-Passman \cite{Far-Ps78},
and Roseblade \cite{Rosb78},
a complete classification of the primitivity of group algebras of
polycyclic-by-finite groups was given.
In particular, it was determined that, 
for a polycyclic-by-finite group $G$,
the group algebra $KG$ is primitive
if and only if its FC-center is trivial
and $K$ is not an absolute field.
These groups
belong to the class of noetherian groups.
We note that it is known to be difficult to find a noetherian group
which is not polycyclic-by-finite (see \cite{Ols}), and that almost all other known infinite groups 
belong to the class of non-noetherian groups, including free groups, locally free groups, free products,
amalgamated free products, HNN-extensions, Fuchsian groups, one relator groups, and free Burnside groups. 

In 1973,
Formanek \cite{For} showed that $KG$ is primitive for any field $K$,
provided that $G$ is the free product of non-trivial groups $A$ and $B$,
both of which are not isomorphic to the infinite dihedral group.
In 1989,  Balogun \cite{Bal} generalized this result
to one for amalgamated free products.
Since then, author Nishinaka
gave two results on primitivity of group rings $KG$, one 
in 2007 \cite{Ni07}, and another in 2011 \cite{Ni11};
one is a result for the ascending HNN-extension $G$ of a non-abelian free group,
and the other is for a locally free group $G$.
In this work, we will give a result which generalizes these main results of \cite{Bal}, \cite{For}, \cite{Ni07}, and \cite{Ni11}.
Consider the following condition:
\vskip3pt

\begin{tabularx}{11.5cm}{lX}
$(\ast)$
&For each subset $M$ of $G$ consisting of a
finite number of elements not equal to $1$, and for any positive integer $m$,
there exist distinct $a$, $b$, and $c$ in $G$
so that
if 
$(x_{1}^{-1}g_1x_{1})
\cdots (x_{m}^{-1}g_mx_{m})=1$, where $g_i$ is in $M$ and $x_i$ is equal to $a$, $b$, or $c$ for all $i$ between $1$ and $m$, then $x_{i}=x_{i+1}$ for some $i$.\\
\end{tabularx}
\vskip3pt

We will explain that if $G$ is a countably infinite group which satisfies $(\ast)$,
then $KG$ is primitive for any field $K$.
More generally,
we will prove the following theorem:
\begin{theorem}\label{MR_TH}
Let $G$ be a group
which has a non-abelian free subgroup whose cardinality is the same as that of $G$, and suppose that $G$ satisfies $(\ast)$. Then, if $R$ is a domain with $|R|\leq |G|$, the group ring $RG$ of $G$ over $R$ is primitive. In particular, the group algebra $KG$ is primitive for any field $K$.
\end{theorem}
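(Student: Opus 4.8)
The plan is to exhibit a faithful simple (irreducible) right $RG$-module, which is equivalent to producing a maximal right ideal of $RG$ whose core (its largest two-sided subideal) is zero. I would build this module as $RG/\rho^{*}$ for a maximal right ideal $\rho^{*}$ obtained by first constructing a proper right ideal $\rho$ with the following density property: for every nonzero $\alpha \in RG$ there is some $\beta \in RG$ with $1 - \alpha\beta \in \rho$. Granting such a $\rho$, choose by Zorn's lemma a maximal right ideal $\rho^{*} \supseteq \rho$; then $M = RG/\rho^{*}$ is simple, and it is faithful because if $0 \neq \alpha$ lay in $\mathrm{Ann}(M) = \mathrm{core}(\rho^{*})$ we would have $\alpha\beta \in \rho^{*}$ while $1-\alpha\beta \in \rho \subseteq \rho^{*}$, forcing $1 \in \rho^{*}$, contrary to properness. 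Throughout, the cardinal bookkeeping is controlled by the hypotheses: since $G$ contains a non-abelian free subgroup it is infinite, and since $|R| \le |G|$ we have $|RG| = |G| =: \kappa$, so the nonzero elements of $RG$ can be listed as $\{\alpha_{\xi} : \xi < \kappa\}$.

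I would construct $\rho$ as the union of an increasing chain $(\rho_{\xi})_{\xi < \kappa}$ of proper right ideals by transfinite recursion, handling $\alpha_{\xi}$ at stage $\xi$. Here the free subgroup $F \le G$ (which has cardinality $\kappa$, and hence contains a family of $\kappa$ free generators) does two jobs: it matches the length of the recursion, and its free generators supply a reservoir of mutually independent elements, one (or a fresh block) of which is reserved at each stage so that the relation introduced at stage $\xi$ cannot interact with those introduced at other stages. At stage $\xi$, after multiplying by a group element I may assume $1$ lies in the support of $\alpha_{\xi}$, so that $\alpha_{\xi} = r\cdot 1 + \sum_{g \in M_{\xi}} r_g\,g$ with $r \neq 0$ and $M_{\xi}$ a finite set of elements $\neq 1$. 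I then apply condition $(\ast)$ to $M_{\xi}$ (together with a bound $m$ coming from the word lengths that can arise) to obtain distinct conjugators $a,b,c$, and I use $a,b,c$ together with the reserved free generator to define $\beta_{\xi}$ and set $\rho_{\xi+1} = \rho_{\xi} + (1 - \alpha_{\xi}\beta_{\xi})RG$; by construction $\alpha_{\xi}\beta_{\xi} \equiv 1 \pmod{\rho_{\xi+1}}$, which is what the density property requires.

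The main obstacle, and the only place condition $(\ast)$ is genuinely needed, is verifying that each $\rho_{\xi+1}$ remains proper, i.e. that $1 \notin \rho_{\xi+1}$ (limit stages are immediate, the chain being increasing). Expanding a hypothetical expression of $1$ as a right-$RG$-combination of the generators $1 - \alpha_{\eta}\beta_{\eta}$ ($\eta \le \xi$) and collecting terms, the independence built in through the reserved free generators isolates the contribution of stage $\xi$ and reduces the question to whether a product of conjugates $(x_1^{-1}g_1 x_1)\cdots(x_m^{-1}g_m x_m)$, with each $g_i \in M_{\xi}$ and each $x_i \in \{a,b,c\}$, can equal $1$. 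The role of having three conjugators rather than two is that I can lay out the $x_i$ so that no two consecutive ones coincide; condition $(\ast)$ then asserts that such a product cannot be $1$, which supplies the contradiction and keeps the ideal proper. The delicate points here are that $R$ is only a domain (so coefficients cannot be inverted, and one argues instead with the nonzero leading coefficient $r$ and the absence of zero divisors) and that the alternation pattern must be compatible simultaneously with all earlier relations; this bookkeeping is exactly what the free-generator reservoir is arranged to handle.

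Finally, for the displayed ``in particular'' I would reduce an arbitrary field $K$ to the case already covered. The prime subfield $k \subseteq K$ is countable, so $|k| \le |G|$ and the main statement gives that $kG$ is primitive; passing from primitivity of $kG$ to that of $KG$ is then a standard scalar-extension argument. I expect this last reduction to be routine compared with the central construction, whose whole difficulty is concentrated in the properness verification governed by $(\ast)$.
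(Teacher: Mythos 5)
Your high-level architecture is the same as the paper's: this is Formanek's comaximal-ideal method (build a proper right ideal $\rho$ meeting, for each nonzero $\alpha$, the two-sided ideal generated by $\alpha$ in an element congruent to $1$; pass to a maximal right ideal and note its core must be zero). Two problems, one small and one serious. The small one: your generators $1-\alpha_\xi\beta_\xi$ involve only \emph{right} multiplication of $\alpha_\xi$, which is incompatible with your own stage-$\xi$ construction --- to make the conjugates $x^{-1}gx$ of the support elements of $\alpha_\xi$ appear, you must multiply $\alpha_\xi$ on both sides, i.e.\ use an element of the form $1+\sum_i u_i\alpha_\xi v_i$ of the two-sided ideal (the paper takes $\varepsilon(b)=\sum_{s,t} b_s x_{bt}^{-1}\varphi(b)x_{bt}$ with the reserved free generators $b_s$ placed on the \emph{left}). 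Your faithfulness argument survives this change unaltered, since the core is two-sided, but as written the construction cannot produce the conjugates you later invoke.

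The serious gap is in the properness verification. You assert that expanding $1=\sum_\eta(1-\epsilon_\eta)u_\eta$ and ``collecting terms'' reduces the question to whether a product $(x_1^{-1}g_1x_1)\cdots(x_m^{-1}g_mx_m)$ with no two consecutive $x_i$ equal can be $1$, which $(\ast)$ forbids. That reduction is not valid: condition $(\ast)$ does \emph{not} prevent coincidences among the group elements appearing in the expansion --- products in which consecutive conjugators repeat \emph{can} equal $1$, so distinct terms of $\sum_\eta\epsilon_\eta u_\eta$ genuinely collide, and terms of $\sum_\eta u_\eta$ can cancel surviving terms as well. The correct statement is quantitative, not qualitative: one must show that the number of ``isolated'' support elements (those admitting no collision) strictly exceeds the number of elements available to cancel them, namely $|\mathrm{Supp}(w_1)|>\sum_b m_b$ with $m_b>|H_b|$, whence $|\mathrm{Supp}(w)|\geq|\mathrm{Supp}(w_1)|-|\mathrm{Supp}(w_2)|>0$. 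This counting is the actual content of the paper's proof; it is carried out in Lemmas 3.3 and 3.4, which translate the collision pattern into a two-edge-coloured graph and deduce from the absence of an alternating (SR-)cycle --- guaranteed by $(\ast)$ and the mutual reducedness of the free generators --- a bound on the number of components, hence on the number of non-isolated vertices. Your proposal contains no substitute for this step, and ``the reserved free generators isolate the contribution of stage $\xi$'' is precisely the claim that needs proof. (A lesser omission: the passage from the prime field to arbitrary $K$ is not purely formal; it uses Passman's theorem and therefore requires verifying $\Delta(G)=1$, which the paper deduces from $(\ast)$.)
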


As we discuss in Section~\ref{PAFP}, one can easily check that non-noetherian groups with free subgroups often satisfy $(\ast)$; e.g., non-abelian locally free groups, amalgamated free products, or HNN-extensions will satisfy $(\ast)$. Moreover,
using Theorem~\ref{MR_TH}, we will show that every group algebra of a one relator group with torsion is primitive.

In order to prove Theorem~\ref{MR_TH},
we construct a maximal right ideal in $KG$
which includes no non-trivial ideal of $KG$.
We then show that the constructed right ideal
is proper.
To do this, we use graph-theoretic methods.
In particular,
we define what we call an SR-graph and an SR-cycle in Section~\ref{SRgraph}, and show in Section~\ref{MF} that 
the proof of Theorem~\ref{MR_TH} can be reduced to
finding an SR-cycle in a certain SR-graph. Applications of Theorem~\ref{MR_TH} and future work are then discussed in subsequent sections. 

For the remainder of this document, let $\N$ denote the set of positive integers, $\N_0:=\N\cup\{0\}$, and $[n]:=\{1,2,\ld,n\}$ for any $n\in\N$. As mentioned, the results of Section~\ref{SRgraph} are established graph-theoretically. We do not assume much prior knowledge of graph theory, however, some basic familiarity is assumed; for any terminology and notation which we do not define as it is assumed particularly standard, we follow \cite{B-M} (which can also serve as an introductory text if needed). Though it is nonstandard in modern graph theory, we use script letters to denote graphs so that our notation for graphs is easily distinguishable from our notation for groups.


\section{SR-graphs}
\label{SRgraph}

In this section, we define an SR-graph and an SR-cycle; we show that certain SR-graphs have SR-cycles. We write $\m{G}=(V,E)$ to denote that $\m{G}$ is a simple graph (undirected and without loops or multi-edges) having vertex set $V$ and edge set $E$. We denote $\{v,w\}\in E$ by $vw$ when there is no risk of confusion. We let $I(\m{G})$ denote the isolated vertices of $\m{G}$, i.e., the set of all $v\in V$ for which $vw\notin E$ for all $w\in V$. We denote by $C(\m{G})$ the set of components of $\m{G}$, i.e., the set of subgraphs of $\m{G}$ which partition $\m{G}$, so that in each subgraph any two vertices are joined by a path, and so that no vertices which do not lie in the same subgraph are joined by a path in $\m{G}$; we let $c(\m{G}):=|C(\m{G})|$. We say that $\m{G}$ is connected if $c(\m{G})=1$. For any $W\sub V$, we let $\m{G}[W]$ denote the subgraph of $\m{G}$ induced by $W$, i.e., $\m{G}[W]:=(W,\{vw\in E\,|\,v,w\in W\})$; let $\m{G}_v:=\m{G}[V\sm\{v\}]$. We let $X(\m{G})$ denote the set of all cut-vertices of $\m{G}$, i.e., the set of all $v\in V$ so that $c({\m{G}_v})>c(\m{G})$. We begin with two definitions:

\begin{definition}\label{def:SRdefnew}
Let $\m{G}:=(V,E)$ and $\m{H}:=(V,F)$. If every component of $\m{G}$ is a complete graph, and if $E\cap F=\emptyset$, then we call the triple $\m{S}=(V,E,F)$ a \emph{sprint relay graph}, abbreviated SR-graph. We view $\m{S}$ as the graph $(V,E\cup F)$, guaranteed simple as $E\cap F=\emptyset$, with edges partitioned into $E$ and $F$; we denote $\m{S}$ by $(\m{G},\m{H})$ rather than $(V,E,F)$ when convenient. 
\end{definition}

\begin{definition}\label{def:SRcycledefnew}
A cycle in an SR-graph $(V,E,F)$ is called an SR-cycle if its edges belong alternatively to $E$ and not to $E$; more formally, we call cycle $(V',E')$ an SR-cycle if there is labeling $V'=\{v_1,v_2,\ld,v_c\}$ and $E'=\{v_1v_2,v_2v_3,\ld,v_{c-1}v_c,v_c v_1\}$ so that $v_iv_{i+1}\in E$ if and only if $i$ is odd, for some even $c$. 
\end{definition}

The class of SR-graphs is a subclass of the class
of two-edge coloured graphs
in which an SR-cycle is called an alternating cycle (see \cite{GROSSMAN}).

For the remainder of this section, fix $\m{S}=(V,E,F)$, $\m{G}=(V,E)$, and $\m{H}=(V,F)$ so that $V\neq\emptyset$, every component of $\m{G}$ complete, and $\m{S}$ an SR-graph. 
Moreover, let $\m{H}_1,\m{H}_2,\ld,\m{H}_n$ denote the components of $\m{H}$ with $\m{H}_i=(V_i,E_i)$ over $i\in[n]$.  
We first address the case in which $\m{H}_i$ is a complete graph for each $i\in[n]$ as follows:

\begin{theorem}\label{thm:complete}
If $\m{S}$ is connected and each component of $\m{H}$ is complete, then $\m{S}$ has an SR-cycle if and only if $c(\m{G})+c(\m{H})<|V|+1$. 
\end{theorem}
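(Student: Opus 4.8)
The plan is to encode the SR-graph $\m{S}$ as the incidence structure of the cliques of $\m{G}$ and $\m{H}$, thereby converting the colour-alternating SR-cycles into ordinary cycles of an auxiliary bipartite graph, where the existence of a cycle is governed purely by an edge count.

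Concretely, I would first build a bipartite graph $B$ whose two vertex classes are the set of components of $\m{G}$ and the set of components of $\m{H}$ (all of which are complete by hypothesis). Since each $v\in V$ lies in exactly one $\m{G}$-component $g(v)$ and exactly one $\m{H}$-component $h(v)$, I let $v$ index an edge of $B$ joining $g(v)$ to $h(v)$; thus $B$ has $c(\m{G})+c(\m{H})$ vertices and exactly $|V|$ edges. The key observation, and precisely where the completeness of both families is used, is that no two distinct $v\neq w$ can share both their $\m{G}$-component and their $\m{H}$-component: otherwise $vw$ would lie in $E\cap F=\emptyset$. Hence distinct vertices give distinct edges, so $B$ is simple and $v\mapsto\{g(v),h(v)\}$ is a bijection $V\to E(B)$.

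Next I would establish the correspondence that $\m{S}$ contains an SR-cycle if and only if $B$ contains a cycle. For the backward direction, a cycle $g_1,h_1,g_2,\ldots,g_k,h_k$ in $B$ (necessarily of even length $2k$ with $k\ge 2$, as $B$ is bipartite) has edges $a_i$ joining $g_i,h_i$ and $b_i$ joining $h_i,g_{i+1}$, all distinct; since $a_i,b_i$ share the complete $\m{H}$-component $h_i$ we get $a_ib_i\in F$, and since $b_i,a_{i+1}$ share the complete $\m{G}$-component $g_{i+1}$ we get $b_ia_{i+1}\in E$, so $a_1b_1a_2\cdots a_kb_k$ is an honest SR-cycle on $2k$ distinct vertices. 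Conversely, an SR-cycle, read through the bijection above, becomes a closed trail in $B$ (distinct vertices map to distinct edges) of length at least $4$, which therefore contains a cycle. This is exactly the step that exploits completeness: it makes ``sharing a component'' synonymous with ``being joined by an edge of the appropriate colour,'' and it is what distinguishes this base case from the later general theorem where $\m{H}$ need not be complete.

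Finally I would convert the cycle count into the stated inequality. Because $\m{S}$ is connected, $B$ must be connected as well: any path in $\m{S}$ only ever passes between vertices sharing a component, so it remains within one component of $B$, whence a disconnection of $B$ would force one of $\m{S}$ (equivalently, $\m{S}$ is the line graph of $B$, and $B$ has no isolated vertices). A connected graph contains a cycle exactly when its edge count is at least its vertex count; for $B$ this reads $|V|\ge c(\m{G})+c(\m{H})$, i.e.\ $c(\m{G})+c(\m{H})<|V|+1$. Combined with the correspondence, this yields the claimed equivalence. I expect the main obstacle to be the careful bookkeeping in the cycle correspondence — ensuring the vertices produced in each direction are genuinely distinct, so that one obtains honest cycles rather than mere closed walks — together with the short (but necessary) verification that $B$ inherits connectedness from $\m{S}$.
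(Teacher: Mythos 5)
Your proof is correct, but it takes a genuinely different route from the paper. The paper proves Theorem~\ref{thm:complete} by induction on $|V|$: it first establishes the a priori bound $c(\m{G})+c(\m{H})\leq|V|+1$, then handles the two directions by deleting a vertex $v$ and splitting into the cases $v\in X(\m{S})$ and $v\notin X(\m{S})$ via Remark~\ref{rem:pre}, invoking the Grossman--H{\"a}ggkvist result (Lemma~\ref{lem:gross}) to produce a vertex in $I(\m{G})\cup I(\m{H})\cup X(\m{S})$ when no SR-cycle exists. You instead observe that when both clique families are complete, $\m{S}$ is exactly the line graph of the simple bipartite ``clique-incidence'' graph $B$ on $c(\m{G})+c(\m{H})$ vertices and $|V|$ edges (simplicity being precisely the condition $E\cap F=\emptyset$), that SR-cycles in $\m{S}$ correspond to cycles in $B$, and that a connected graph has a cycle iff its edge count is at least its vertex count. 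All the steps check out: the edge/vertex bijection is injective because two vertices sharing both a $\m{G}$- and an $\m{H}$-clique would force an edge of $E\cap F$; the backward direction yields honest SR-cycles on $2k\geq 4$ distinct vertices since cycles in the simple bipartite $B$ have length at least $4$; the forward direction produces a closed trail in $B$ whose edge set has all degrees even and positive, hence contains a cycle; and connectedness of $B$ follows from that of $\m{S}$ together with $V\neq\emptyset$. What your approach buys is a short, non-inductive, self-contained argument that dispenses with Lemma~\ref{lem:gross} and the cut-vertex analysis, and it also delivers the inequality (\ref{eq:lem}) for free in this complete--complete setting (as $|V(B)|-1\le |E(B)|$ fails only by the forest bound); what the paper's machinery buys is that Remark~\ref{rem:pre} and the inductive bound (\ref{eq:lem}) are stated for general SR-graphs and are reused in the surrounding development, whereas your encoding is specific to the case where every component of $\m{H}$ is complete and does not extend to the complete-multipartite setting of Theorem~\ref{thm:multipart}.
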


Consider the following result of Grossman and H{\"a}ggkvist \cite{GROSSMAN}:

\begin{lemma}\label{lem:gross}
If $\m{S}$ has no SR-cycle, then $I(\m{G})\cup I(\m{H})\cup X(\m{S})\neq\emptyset$. 
\end{lemma}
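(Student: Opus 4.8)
The plan is to prove the contrapositive. So suppose $I(\m{G})=I(\m{H})=\emptyset$ and $X(\m{S})=\emptyset$; that is, every vertex of $\m{S}$ meets at least one edge of $E$ and at least one edge of $F$, and $\m{S}$ has no cut-vertex. I will produce an SR-cycle. First I would reduce to the case that $\m{S}$ is $2$-connected with $|V|\ge 3$: since each vertex meets both $E$ and $F$ it has degree at least $2$, so no component is a single vertex or a single edge; passing to a component if necessary, the hypothesis $X(\m{S})=\emptyset$ forces it to have no cut-vertex, and (being connected with $|V|\ge 3$) to be $2$-connected. An SR-cycle found there is an SR-cycle of $\m{S}$.

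Next I would take a longest path $P=v_0v_1\cdots v_k$ whose edges alternate between $E$ and $F$, say with $v_0v_1\in E$, so that the $i$-th edge $v_{i-1}v_i$ lies in $E$ exactly when $i$ is odd. The crucial local fact is a parity observation at the endpoint $v_0$. Since $I(\m{H})=\emptyset$, $v_0$ meets some $F$-edge; by maximality of $P$ its other end cannot lie outside $P$ (otherwise we could prepend it and lengthen $P$), so every $F$-edge at $v_0$ runs to some $v_j$ on $P$. If such a $j$ were odd, then the cycle $v_0v_1\cdots v_jv_0$ would be an SR-cycle, since the $j$-th edge lies in $E$ and the closing edge $v_jv_0$ lies in $F$, so the colours alternate all the way around. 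Hence every $F$-neighbour of $v_0$ occurs at an even index, and at least one such index exists.

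The heart of the proof is then a colour-respecting rotation of P\'osa type. Using an $F$-edge $v_0v_j$ with $j$ even, one re-routes $P$ into another longest alternating path on the same vertex set, now terminating at $v_{j-1}$ with an $E$-edge; iterating this generates a whole family of alternating paths with varying endpoints but a common vertex set. Applying the same parity observation to each new endpoint, and using $2$-connectedness to guarantee a chord joining two of these configurations, one forces the existence of a chord whose index parity closes an SR-cycle, contradicting our assumption. I expect this last step to be the main obstacle: unlike the uncoloured P\'osa argument, every rotation must respect the $E/F$-alternation, so one must carefully track the parity of the index at which each chord attaches and show that $2$-connectedness always supplies a chord of the parity that completes an SR-cycle. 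This is exactly the argument of Grossman and H{\"a}ggkvist \cite{GROSSMAN}, which I would follow.
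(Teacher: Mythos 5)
The paper does not actually prove this lemma: it is quoted verbatim as a known result of Grossman and H\"aggkvist \cite{GROSSMAN} and used as a black box, so there is no internal proof for your argument to be compared against. Your proposal therefore has to be judged as a standalone reconstruction.

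As such, the framing is right but the core is missing. The reduction to a $2$-connected component is fine (every vertex meets an $E$-edge and an $F$-edge, hence has degree at least $2$, and $X(\m{S})=\emptyset$ rules out cut vertices), and the parity observation at the endpoint of a longest alternating path is correct: an $F$-chord $v_0v_j$ with $j$ odd closes an SR-cycle, so all $F$-chords from $v_0$ must land at even indices. But everything after that --- the ``colour-respecting rotation of P\'osa type'' and the claim that $2$-connectedness ``always supplies a chord of the parity that completes an SR-cycle'' --- is precisely the content of the lemma, and you do not carry it out; you name it as the main obstacle and then defer to \cite{GROSSMAN}. The deferral is not innocuous: in the two-coloured setting a rotation at $v_0v_j$ changes which chords are available at the new endpoint $v_{j-1}$, the set of reachable endpoints does not enjoy the closure properties that drive the uncoloured P\'osa argument, and it is not evident that iterating rotations must terminate in a configuration where a chord of the correct parity exists. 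So as written this is a plan plus a citation rather than a proof. Since the paper itself only cites \cite{GROSSMAN} for this statement, resting on the reference is a defensible expository choice, but the sketch should not be mistaken for a self-contained argument.
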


Before moving on, let us  collect some straightforward observations:

\begin{remark}\label{rem:pre}
Assume that $\m{S}$, $\m{G}$, and $\m{H}$ satisfy the hypotheses of Theorem~\ref{thm:complete}. 
\begin{enumerate}[(I)]
\item If $v\notin X(\m{S})$, then \label{1}
\begin{enumerate}[(i)]
\item $v\in I(\m{G})\cup I(\m{H})$ implies $c({\m{G}_v})+c({\m{H}_v})=c(\m{G})+c(\m{H})-1$;\label{1a}
\item $v\notin I(\m{G})\cup I(\m{H})$ implies $c({\m{G}_v})=c(\m{G})$ and $c({\m{H}_v})=c(\m{H})$.\label{1b}
\end{enumerate}
\item If $v\in X(\m{S})$, then without loss of generality, \label{2}
\begin{enumerate}[(i)]
\item ${\m{S}_v}$ is an SR-graph with components $(\m{G}_1,\m{H}_1)$ and $(\m{G}_2,\m{H}_2)$;\label{2a}
\item $\sum_{i=1}^2(c(\m{G}_i)+c(\m{H}_i))=c(\m{G})+c(\m{H})$ and $|V_1|+|V_2|=|V|-1$, where $V_1$ and $V_2$ are the vertex sets of $(\m{G}_1,\m{H}_1)$ and $(\m{G}_2,\m{H}_2)$, respectively.\label{2b}
\end{enumerate}
\end{enumerate}
\end{remark}

We are now ready to prove Theorem~\ref{thm:complete}. 

\begin{proof}[Proof of Theorem~\ref{thm:complete}]
Before entering the heart of this proof, we show that 
\begin{equation}
c(\m{G})+c(\m{H})\leq |V|+1, \label{eq:lem}
\end{equation}
which holds trivially when $|V|=1$. Assume, by way of induction, that $|V|>1$ and that (\ref{eq:lem}) holds for SR-graphs on fewer vertices. Fix $v\in V$. If $v\notin X(\m{S})$, then ${\m{S}_v}$ is connected and ${\m{H}_v}$ has complete components; thus, $c(\m{G}_v)+c(\m{H}_v)\leq |V|$ by induction, and so (\ref{eq:lem}) follows from Remark~\ref{rem:pre}(\ref{1}). If $v\in X(\m{S})$, then ${\m{S}_v}$ has components $(\m{G}_1,\m{H}_1)$ and $(\m{G}_2,\m{H}_2)$ by Remark~\ref{rem:pre}(\ref{2})(i); by induction, $c(\m{G}_i)+c(\m{H}_i)\leq |V_i|+1$ for $i\in[2]$, and thus (\ref{eq:lem}) holds by Remark~\ref{rem:pre}(\ref{2})(ii). 

We are now ready for the crux of our argument. First, assume that $\m{S}$ has an SR-cycle. We prove by induction on $|V|$ that $c(\m{G})+c(\m{H})<|V|+1$, noting that we may assume $|V|\geq 4$. This holds trivially if $|V|=4$, so assume $|V|>4$ and, by way of induction, that the the result holds for SR-graphs on fewer vertices. This result holds trivially if $\m{S}$ is an SR-cycle, so we may assume that there is $C\subsetneq V$ so that $\m{S}[C]$ is an SR-cycle. 

Consider $v\in V\sm C$.  If $v\notin X(\m{S})$, then we can obtain the desired result with a similar argument to that which we used in the first paragraph when $v\notin X(\m{S})$ was assumed. Assume $v\in X(\m{S})$, in which case $\m{S}_v$ has components $(\m{G}_1,\m{H}_1)$ and $(\m{G}_2,\m{H}_2)$ by Remark~\ref{rem:pre}(\ref{2})(i). Since $v\in X(\m{S})$ and $\m{G}$ and $\m{H}$ have complete components, either $C\sub V_1$ or $C\sub V_2$; say, without loss of generality, that $C\sub V_1$. Then, by our induction hypothesis, $c(\m{G}_1)+c(\m{H}_1)<|V_1|+1$. Also, by (\ref{eq:lem}), $c(\m{G}_2)+c(\m{H}_2)\leq |V_2|+1$. Thus, by Remark~\ref{rem:pre}(\ref{2})(ii) that $c(\m{G})+c(\m{H})<|V|+1$. 

To prove the converse, by (\ref{eq:lem}), it suffices to show that if $\m{S}$ has no SR-cycle, then $c(\m{G})+c(\m{H})=|V|+1$. To that end, assume $\m{S}$ has no SR-cycle. Our proof will again be by induction on $|V|$. If $X(\m{S})\neq\emptyset$ then we may consider $v\in X(\m{S})$ and obtain the result with a similar argument to that which we used in the first paragraph when $v\in X(\m{S})$ was assumed. Assume $X(\m{S})=\emptyset$. By Lemma~\ref{lem:gross}, there is $v\in I(\m{G})\cup I(\m{H})$. By induction, $c(\m{G}_v)+c(\m{H}_v)=|V|$. It follows from Remark~\ref{rem:pre}(\ref{1})(i) that $c(\m{G})+c(\m{H})=|V|+1$.  
\end{proof}

For the remainder of this section, let $I:=I(\m{G})$, $W:=V\sm I$, $W_i:=V_i\sm I$, and say $\m{H}[W_i]=(W_i,F_i)$. For any $m_1,m_2,\ld,m_k\in\N$, we let $K_{m_1,m_2,\ld,m_k}$ denote the complete multipartite graph with partite sets of size $m_1,m_2,\ld,m_k$, i.e., the graph $(V',E')$ so that $V'$ can be partitioned into sets $P_1,P_2,\ld,P_k$ called partite sets, with $|P_i|=m_i$ and $vw\in E'$ if and only if $v$ and $w$ are in different partite sets for all $v,w\in V$. We let $\mu(K_{m_1,m_2,\ld,m_k}):=\max_{i\in[k]}\{m_i\}$. We now handle the case in which each component of $\m{H}$ is complete multipartite.

\begin{theorem}\label{thm:multipart}
Assume that $\m{H}_i$ is a complete multipartite graph for each $i\in[n]$. If $|I|\leq n$ and $|V_i|>2\mu(\m{H}_i)$ for each $i\in[n]$, then $\m{S}$ has an SR-cycle. 
\end{theorem}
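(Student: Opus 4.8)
The plan is to reduce the existence of an SR-cycle to a purely combinatorial matching problem and then solve that problem using the two hypotheses. The first observation is that $\m{S}$ has an SR-cycle if and only if there is a nonempty $S\sub V$ admitting both a perfect matching using only edges of $E$ and a perfect matching using only edges of $F$: given such an $S$, the union of the two matchings is a disjoint union of cycles, each alternating between $E$ and $F$ (and each of length at least $4$, since $E\cap F=\emptyset$ rules out length $2$), so any one of them is an SR-cycle; conversely the vertex set of an SR-cycle has this property. Since the components of $\m{G}$ are complete, a set $S$ has a perfect $E$-matching exactly when $S\sub W$ (no isolated vertex of $\m{G}$ can be matched) and $|S\cap B|$ is even for every component $B$ of $\m{G}$. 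Since each $\m{H}_i$ is complete multipartite, $S$ has a perfect $F$-matching exactly when, for every $i$, the set $S\cap V_i$ has even size and each partite set of $\m{H}_i$ contains at most $\frac12|S\cap V_i|$ of its vertices.

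Next I would encode the two parity requirements by an auxiliary bipartite multigraph $\Gamma$ whose two vertex classes are the non-singleton components of $\m{G}$ and the components $\m{H}_1,\ld,\m{H}_n$, and whose edges are the vertices of $W$, each $v\in W$ joining the component of $\m{G}$ containing $v$ to the $\m{H}_i$ containing $v$. A subset $S\sub W$ meets every component of $\m{G}$ and every $\m{H}_i$ in an even number of vertices precisely when $S$ is an even subgraph of $\Gamma$, i.e. an element of its cycle space. Because $|V_i|>2\mu(\m{H}_i)\geq 2$ forces $|V_i|\geq 3$, and because $|I|\leq n$, we get $|W|=\sum_i|V_i|-|I|\geq 3n-n=2n$. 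Each component of $\m{G}$ meeting $W$ has at least two vertices, so the number $p$ of such components satisfies $p\leq|W|/2$, while $n\leq|W|/2$ as well; hence $\Gamma$ has $|W|\geq p+n$ edges on at most $p+n$ vertices, so it is not a forest and contains a cycle. This yields a nonempty $S\sub W$ satisfying both parity conditions. (As a consistency check, when every $\m{H}_i$ is complete the balance condition below is automatic, and ``$\Gamma$ has a cycle'' translates exactly into $c(\m{G})+c(\m{H})<|V|+1$, recovering Theorem~\ref{thm:complete}.)

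It remains to refine such an $S$ so that it also satisfies the multipartite balance condition, and this is the step I expect to be the main obstacle. The even-subgraph structure fixes, for each component $B$ of $\m{G}$ and each $\m{H}_i$, only the \emph{number} of vertices of $S$ in $B\cap V_i$, leaving free which vertices of $B\cap V_i$ are chosen; it is here that the slack $|V_i|>2\mu(\m{H}_i)$ is essential. Since no partite set of $\m{H}_i$ contains as many as half of the vertices of $V_i$, one should be able to realise the prescribed incidences by vertices distributed among the partite sets so that no partite set receives more than $\frac12|S\cap V_i|$ vertices, making $S\cap V_i$ perfectly $F$-matchable. Concretely I would take a short cycle of $\Gamma$ and then apply a Hall- or greedy-type selection within each $\m{H}_i$; proving that this balance can be met simultaneously across all $\m{H}_i$ together with the already-fixed parities, using $|V_i|>2\mu(\m{H}_i)$, is the heart of the matter. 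Thus $|I|\leq n$ serves only to guarantee that $\Gamma$ has enough edges to contain a cycle, whereas $|V_i|>2\mu(\m{H}_i)$ is precisely what makes the balance achievable.
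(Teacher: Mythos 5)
Your opening reduction is correct and attractive: $\m{S}$ has an SR-cycle if and only if some nonempty $S\sub V$ admits both a perfect $E$-matching and a perfect $F$-matching, the characterisation of perfectly matchable sets in disjoint unions of complete (resp.\ complete multipartite) graphs is right, and the cycle-space argument in the auxiliary multigraph $\Gamma$ (using $|W|\geq 2n\geq p+n$) correctly produces a nonempty $S\sub W$ meeting every component of $\m{G}$ and every $\m{H}_i$ evenly. But the proof stops exactly where the theorem's real content begins, and you say so yourself: nothing is proved about the \emph{balance} condition. The gap is not a routine greedy step. A cycle of $\Gamma$ meets each $\m{H}_i$ in at most two edges, and those two edges land in prescribed cells $B\cap V_i$ and $B'\cap V_i$ for specific components $B,B'$ of $\m{G}$; your only freedom is to choose which vertex of each cell to use. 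If both cells happen to lie inside a single partite set of $\m{H}_i$, no choice makes $S\cap V_i$ $F$-matchable, and the hypothesis $|V_i|>2\mu(\m{H}_i)$ does not forbid this for an individual cycle (already for $n=1$, where every cycle of $\Gamma$ is a doubled edge and $S\cap V_1$ is confined to one component of $\m{G}$, which may sit inside one partite set). So one must pass to more general elements of the cycle space and argue globally that \emph{some} even subgraph admits a balanced selection simultaneously in all $\m{H}_i$; making $|I|\leq n$ and $|V_i|>2\mu(\m{H}_i)$ interact to guarantee this is precisely the missing argument, and it is where all the difficulty lives.

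For comparison, the paper does not go through matchings at all: it inducts on $n$, first showing one may assume no $\m{H}[W_i]$ is a star $K_{1,m}$ (by shrinking such a component to a triangle-like set $\{w_0,w_1,v\}$ and controlling the count of isolated vertices via Lemma~\ref{Add_isol}), then either invoking Lemma~\ref{G_lem_kpart} — a direct construction of a maximal $E$-alternating path that is closed into an SR-cycle, using completeness of the $\m{G}$-components and complete multipartiteness of the $\m{H}_i$ — or deleting a component $V_i$ with $F_i=\emptyset$ and again using Lemma~\ref{Add_isol} to verify the inductive hypothesis $|I(\m{G}[V'])|\leq n-1$. Your framework could conceivably be completed, but as written the proposal establishes only the parity half of the problem and leaves the theorem unproved.
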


In order to build to a proof of Theorem~\ref{thm:multipart}, we first prove two lemmas. 

\begin{lemma}\label{Add_isol}
Let ${U}\sub V$ with ${U}\cap I=\emptyset$, and let ${U}':=V\sm {U}$. Then, $|I\cap {U}'|\leq |I(\m{G}[{U}'])|\leq |I\cap {U}'|+|{U}|$. 
\end{lemma}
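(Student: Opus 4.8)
The plan is to decompose $I(\m{G}[U'])$ according to whether a vertex is already isolated in $\m{G}$ or only becomes isolated after deleting $U$. Since $U\cap I=\emptyset$ we have $U\sub W$, and since $I(\m{G}[U'])\sub U'$ every vertex counted lies in $U'$. First I would record the identity $I(\m{G}[U'])\cap I = I\cap U'$: a vertex isolated in $\m{G}$ remains isolated in any induced subgraph containing it, so $I\cap U'\sub I(\m{G}[U'])$, and conversely $I(\m{G}[U'])\cap I\sub U'\cap I$ because $I(\m{G}[U'])\sub U'$. This identity immediately yields the lower bound, since $|I\cap U'|=|I(\m{G}[U'])\cap I|\le |I(\m{G}[U'])|$.

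For the upper bound it then suffices to bound the remaining vertices, namely to show $|I(\m{G}[U'])\cap W|\le |U|$, because $|I(\m{G}[U'])| = |I\cap U'| + |I(\m{G}[U'])\cap W|$. Here I would exploit that every component of $\m{G}$ is complete. If $v\in I(\m{G}[U'])\cap W$, then $v$ lies in a component $C$ of $\m{G}$ with $|C|\ge 2$, and its $\m{G}$-neighbours are exactly $C\sm\{v\}$; since $v$ is isolated in $\m{G}[U']$ while $v\in U'$, all of these neighbours must have been deleted, i.e. $C\sm\{v\}\sub U$. In particular $C$ meets $U$.

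The crux is to see that distinct vertices of $I(\m{G}[U'])\cap W$ lie in distinct components of $\m{G}$: if $v$ and $v'$ were two such vertices in a common complete component $C$, then $vv'$ would be an edge of $\m{G}[U']$ (as $v,v'\in U'$), contradicting that $v$ is isolated there. Assigning to each $v\in I(\m{G}[U'])\cap W$ some vertex of the nonempty set $C\sm\{v\}\sub U$ therefore gives an injection into $U$, the target vertices lying in pairwise disjoint components and hence being distinct; whence $|I(\m{G}[U'])\cap W|\le |U|$ and the upper bound follows. The only real obstacle is this last counting step, which hinges entirely on the completeness of the components of $\m{G}$; everything else is bookkeeping.
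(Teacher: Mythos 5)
Your proof is correct and follows essentially the same route as the paper's: both split $I(\m{G}[U'])$ into the vertices already in $I$ and the newly isolated ones, and both use completeness of the components of $\m{G}$ to show that assigning each newly isolated vertex a deleted neighbour in $U$ is injective. The only cosmetic difference is that you phrase the injectivity via distinct vertices lying in distinct (disjoint) components, while the paper argues directly that no $w\in U$ can witness two newly isolated vertices.
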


\begin{proof}
As $I\cap {U}'\sub I(\m{G}[{U}'])$, the leftmost inequality is trivial. If $v\in I(\m{G}[{U}'])\sm (I\cap {U}')$, then there is $w\in {U}$ with $vw\in E$, by definition. Moreover, there cannot be more than one such $v$ for each such $w$; indeed, if we had $v,v'\in {U}'$ with $vw,v'w\in E$, then we would have $vv'\in E$ by the fact that $\m{G}$ has complete components, implying that $v\notin I(\m{G}[{U}'])$. So, $|I(\m{G}[{U}'])| - |I\cap {U}'| \leq |{U}|$. 
\end{proof}

\begin{lemma}\label{G_lem_kpart}
If $\m{H}[W_i]\not\simeq K_{1,m}$ for all $m\geq2$ and $I(\m{H}[W])=\emptyset$, then $\m{S}$ has an SR-cycle. 
\end{lemma}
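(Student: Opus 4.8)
The plan is to reduce the lemma to a statement about matchings and then to produce the SR-cycle by an extremal argument, keeping Lemma~\ref{lem:gross} in reserve.

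First I would discard $I=I(\m{G})$: on an SR-cycle the edges alternate between $E$ and $F$, so every vertex of the cycle meets exactly one edge of $E$, whereas a vertex of $I$ meets none. Thus it suffices to find an SR-cycle in the SR-graph $\m{S}[W]$, where moreover $I(\m{G}[W])=\emptyset$ (each $v\in W$ lies in a complete component of $\m{G}$ of size $\ge 2$, all of whose vertices are in $W$) while $I(\m{H}[W])=\emptyset$ by hypothesis. I would then prove the following criterion: \emph{$\m{S}[W]$ has an SR-cycle if and only if there is a nonempty set $T$ of pairwise vertex-disjoint edges of $\m{H}[W]$ such that each component of $\m{G}[W]$ contains an even number of endpoints of $T$.} For the direction I need, evenness lets me pair off the endpoints of $T$ within each (complete) component of $\m{G}[W]$ using edges of $E$, producing a matching $M_E$; then $M_E\cup T$ is a disjoint union of cycles alternating between $E$ and $F$, each of which is an SR-cycle.

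Next I would pass to the multigraph $\m{Q}$ whose vertices are the components of $\m{G}[W]$ and whose edges are the edges of $\m{H}[W]$, each joining the two components meeting it. Since $E\cap F=\emptyset$ the graph $\m{Q}$ has no loops, and since $I(\m{H}[W])=\emptyset$ and each component of $\m{G}[W]$ has at least two vertices, every vertex of $\m{Q}$ has degree at least $2$. In this language a set $T$ as above is a nonempty family of edges that simultaneously forms a matching in $\m{H}[W]$ and is an even subgraph of $\m{Q}$; as $\m{Q}$ has minimum degree at least $2$ it contains a cycle, so the entire difficulty is \emph{realizability}: I must find a cycle of $\m{Q}$ whose two incident edges at each component it passes through use \emph{distinct} vertices of that component. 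I would take such a cycle $O$ of minimum length, breaking ties by the number of coincident vertices, and try to show it has none.

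Suppose $O$ uses one vertex $x$ of a component $A$ for both of its edges $f=xy$ and $f'=xz$ there; then $x,y,z$ lie in a single $\m{H}[W_i]$ with $xy,xz\in F$, and this is exactly where $\m{H}[W_i]\not\simeq K_{1,m}$ enters, since the only connected graphs with no two independent edges are stars and triangles. If $\m{H}[W_i]$ is the triangle $\{x,y,z\}$ then $yz\in F$, and shortcutting the length-two subpath $f,f'$ of $O$ through $A$ to the single edge $yz$ gives a shorter cycle of $\m{Q}$, contradicting minimality; otherwise $\m{H}[W_i]$ has two independent edges and, since it is not a star, an edge $pq$ with $x\notin\{p,q\}$, which I would use to re-route $O$ through a second vertex of $A$ and delete the coincidence. \textbf{The main obstacle} is performing this re-routing so that $O$ stays a single cycle and no new coincidence is created; I expect a short case analysis to handle it, but should it prove awkward I would instead take a minimal counterexample and invoke Lemma~\ref{lem:gross}, which (as $I(\m{G}[W])=I(\m{H}[W])=\emptyset$) yields a cut-vertex, split at it as in Remark~\ref{rem:pre}, and recurse---the subtle point being to check that the two pieces still have no isolated $\m{H}$-vertex and no star component, whose failure would already force $\m{H}[W_i]\simeq K_{1,m}$ in the original graph. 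In all cases the coincidences trace back to stars, which is why $\m{H}[W_i]\not\simeq K_{1,m}$ is the precise hypothesis required.
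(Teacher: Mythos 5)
Your reduction to $\m{S}[W]$, the matching criterion (a nonempty $F$-matching $T$ with an even number of endpoints in each complete $\m{G}$-component closes up, via $E$-edges, to a disjoint union of SR-cycles), and the passage to the quotient multigraph $\m{Q}$ of minimum degree at least $2$ are all sound, and you have correctly located the entire difficulty in the ``realizability'' of a cycle of $\m{Q}$. But that is exactly the step you do not carry out, and as sketched it does not go through. The hypothesis $\m{H}[W_i]\not\simeq K_{1,m}$ lives in a component of $\m{H}$, whereas the coincidence lives in a component $A$ of $\m{G}$: when $O$ enters and leaves $A$ through the same vertex $x$ via $f=xy$ and $f'=xz$, the star/triangle trichotomy applies to the $\m{H}$-component containing $\{x,y,z\}$, and the edge $pq$ with $x\notin\{p,q\}$ that it supplies has endpoints scattered among other $\m{G}$-components; it does not provide ``a second vertex of $A$,'' so the proposed re-routing has no local description. (The triangle shortcut is fine, since it strictly shortens $O$; it is the generic case that is missing.) Removing the coincidence really requires replacing one of $f,f'$ by an $F$-edge at some $x'\in A\sm\{x\}$, which alters the cycle globally, and you give no argument that this can be done while keeping $O$ a single cycle of no greater length and with no more coincidences --- your own text concedes the point. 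The fallback through Lemma~\ref{lem:gross} is likewise only a plan: splitting at a cut-vertex can create isolated $\m{H}$-vertices and star components in the pieces, and the claim that such a failure ``would already force $\m{H}[W_i]\simeq K_{1,m}$ in the original graph'' is precisely the assertion that would need proof. Note also that the standing assumption in force here is that each component of $\m{H}$ is complete multipartite; the paper's argument uses this essentially, and ``connected and not a star'' alone gives you strictly less to work with.

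For contrast, the paper avoids the surgery entirely by a greedy argument directly in $\m{S}[W]$: grow a maximal path whose edges alternate between $E$ and $F$; maximality together with $I(\m{G}[W])=I(\m{H}[W])=\emptyset$ forces a chord back into the path, which closes an SR-cycle in all but one terminal configuration, and that configuration is resolved by combining complete multipartiteness with $\m{H}_q\not\simeq K_{1,m}$ to produce a fourth vertex of $\m{H}_q$ and hence an $F$-edge of the correct parity. If you wish to keep your framework, the cleanest repair is to prove realizability by running that alternating-path argument rather than by extremal surgery on a minimal cycle of $\m{Q}$.
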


\begin{proof}
We show, more strongly, that $\m{S}[W]$ has an SR-cycle. For ease of notation, assume that $\m{S}=\m{S}[W]$, i.e., that $I=\emptyset$, $I(\m{H})=\emptyset$, and $\m{H}_i\not\simeq K_{1,m}$ for $m\geq2$. Fix $v_1\in V$. As $I=\emptyset$, there is $w_1\in {V}\sm\{v_1\}$ with $v_1w_1\in E$. As $I(\m{H})=\emptyset$ and $E\cap F=\emptyset$, there is $v_2\in {V}\sm\{v_1,w_1\}$ with $w_1v_2\in F$. Since $I=\emptyset$, and since $\m{G}$ has complete components and $E\cap F=\emptyset$, there is $w_2\in {V}\sm\{v_1,w_2,v_2\}$ with $v_2w_2\in E$. Continuing this way until we no longer can (noting that $|{V}|<\infty$), we create an $E$- alternating path either of the form $\{v_1,w_1,\ld,w_i,v_{i+1}\}$ or of the form $\{v_1,w_1,\ld,v_i,w_i\}$. First assume the form $\{v_1,w_1,\ld,w_i,v_{i+1}\}$. By maximality and since $I=\emptyset$, either $v_{i+1}v_j\in E$ for some $j\in[i]$, or $v_{i+1}w_j\in E$. In the latter case, edges $w_jv_{j+1},\ld,w_iv_{i+1},v_{i+1}w_j$ form an SR-cycle. In the former case, since $v_jw_j\in E$ and $\m{G}$ has complete components, we must have $v_{i+1}w_j\in E$, thus reducing us to the latter case. 

It remains to assume that our $E$-alternating path has the form $\{v_1,w_1,\ld,v_i,w_i\}$. By similar reasoning to that used in the previous paragraph, either $w_iv_j\in F$
or $w_iw_j\in F$ for some $j\in[i]$. In the former case, edges $v_{j}w_j,\ld,v_iw_i,w_{i}v_j$ form an SR-cycle. Assume $w_iw_j\in F$. We may assume that
$j$ is the minimum number
such that $w_iw_j\in F$ and, moreover, that
$w_iv_{j'}\not\in F$ for any $j'$.
Since $w_jv_{j+1}\in F$, there is a component $\m{H}_q=(V_q, E_q)$ of $\m{H}$
such that $w_i, w_j, v_{j+1}\in V_q$.  Since the subgraph of $\m{H}_q[\{w_i, w_j, v_{j+1}\}]\simeq K_{1,2}$ and $\m{H}_q\not\simeq K_{1,m}$ for $m\geq2$, there is $w\in {V}_q\sm\{w_i, w_j, v_{j+1}\}$. Since $w_iv_{j+1}\not\in F$ by assumption,
we  see that
there exists $h$ with $h\ne j$ such that $w_iw_h\in F$,
where $h>j$ by the minimality of $j$.
It then follows, as $\m{H}_q$ is complete-multipatite, 
that $w_hv_{j+1}\in F_q$; so, the edges $v_{j+1}w_{j+1},w_{j+1}v_{j+2},\ldots, w_{h}v_{j+1}$ form an SR-cycle. 
\end{proof}

We are now read to prove Theorem~\ref{thm:multipart}. 

\begin{proof}[Proof of Theorem~\ref{thm:multipart}]
Our proof is by induction on $n$. Assume $n=1$, and say $\m{H}_1$ has partite sets $P_1,P_2,\ld,P_p$. We note that if there are distinct $i,j\in[p]$, and $v_i,w_i\in P_i$ and $v_j,w_j\in P_j$ with $v_iw_i, v_jw_j\in E$, then $\m{S}[\{v_i,w_i,v_j,w_j\}]$ is an SR-cycle by definition. So, we my assume, without loss of generality, that elements of $E$ join only vertices of $P_1$ (and thus, that $P_i\sub I$ for $i\neq 1$). However, as $|V_1|>2|P_1|$, this implies that $|I|\geq |V_1\sm P_1|>1$, so this case cannot occur, and thus the desired result holds when $n=1$. Assume, by way of induction, that this result holds for all SR-graphs $(V',E',F')$ satisfying analogous hypotheses, if $(V',F')$ has less than $n$ components. 

Suppose that there is $i\in[n]$ with $\m{H}[W_i]\simeq K_{1,m}$ for some $m\geq2$. Since $|W_i|=|V_i|-|I\cap V_i|$ by definition, and since $|W_i|=m+1$ by assumption, it follows from our hypotheses that
\begin{equation}
m+1>2\mu(\m{H}_i)-|I\cap V_i|\geq 2m - |I\cap V_i|, \label{srg1}
\end{equation}
since $\mu(\m{H}_i)\geq \mu(\m{H}[W_i])=m$. Let $P_1,P_2,\ld,P_k$ be the partite sets of $\m{H}_i$, and let $Q_1=\{w_0\}$ and $Q_2=\{w_1,w_2,\ld,w_m\}$ be the partite sets of $\m{H}[W_i]$; without loss of generality, say $Q_1\sub P_1$ and $Q_2\sub P_2$. Now, since $|V_i|>2\mu(\m{H}_i)$, $k\geq 3$; since $\m{H}[W_i]\simeq K_{1,m}$, this implies that there is $v\in P_3\cap I$. Let $V'$ be obtained from $V$ by replacing $V_i$ with $V_i':=\{w_0,w_1,v\}$, and consider $\m{S}[V']$. Since $\m{H}[V_i']\simeq K_{1,1,1}$, we have $|V_i'|>2\mu(\m{H}[V_i'])$. Moreover, if the vertices in $Q_2\setminus\{w_1\}$
are removed from $V$,
then the number of additional isolated vertices caused by 
the removing of those vertices
is at most $|Q_2\setminus\{w_1\}|$ by Lemma \ref{Add_isol}.
Moreover $|(I\cap V_i)|\geq m$ by (\ref{srg1}),
and so it holds that
$$\begin{array}{lll}
|I(\m{G}[V'])|&\leq |I|
-|(I\cap V_i)\setminus \{v\}|+|Q_2\setminus \{w_1\}|\\
&\leq n-(m-1)+(m-1)=n.\\
\end{array}$$
Therefore, $\m{S}[V']$ still satisfies the hypotheses of our theorem, and clearly, if $\m{S}[V']$ has an SR-cycle then so must $\m{S}$. Moreover, by considering corresponding $W_i'=\{w_0,w_1\}$, we see that $\m{H}[W_i']\simeq K_{1,1}$ (and, in particular, no longer isomorphic to $K_{1,m}$ for any $m\geq2$).  Thus, we may assume that $\m{H}[W_i]\not\simeq K_{1,m}$ (by applying this procedure to any component of $\m{H}$ if necessary). 

Since $\m{H}[W_i]\not\simeq K_{1,m}$ for any $m\geq2$, if $F_i\neq\emptyset$ for all $i\in[n]$ (as this is equivalent to $I(\m{H}[W])=\emptyset$ in this case), then we obtain the desired result by Lemma~\ref{G_lem_kpart}. So, it remains to assume that  $\m{H}[W_i]\not\simeq K_{1,m}$, but that $F_i=\emptyset$ for some $i$. Let $V':=V\sm V_i$ and say $\m{S}[V']=(V',E',F')$. Since the number of components of $(V',F')$ is $n-1$, we may apply our induction hypothesis and prove this result if $|I(\m{G}[V'])|\leq n-1$; we show that this must be the case.  Let $m:=|W_i|$. 
Since $\m{H}_i$ is a complete $k$-partite graph
and $F_i=\emptyset$,
$W_i$ is contained in a partition of $\m{H}_i$, and so $|V_i|>2m$ by assumption;
thus, $|I\cap V_i|=|V_i|-m>m$. Since ${I\cap V'}=I\sm (I\cap V_i)$ and $|I|\leq n$, we have $|{I\cap V'}|\leq n-m-1$. On the other hand, by Lemma \ref{Add_isol}, $|I(\m{G}[V'])|-|{I\cap V'}|\leq m$.
Hence, 
$$m\geq |I(\m{G}[V'])|-|{I\cap V'}|
\geq|I(\m{G}[V'])|-(n-m-1),$$
and thus $|I(\m{G}[V'])|\leq n-1$.
\end{proof}


\section{Proof of Theorem~\ref{MR_TH}}
\label{MF}

Let $G$ be a group and $M$ a subset of $G$.
We denote by $\widetilde{M}$ the symmetric closure of $M$;
$\widetilde{M}=M\cup\{x^{-1}\ |\ x\in M\}$.
For non-empty subsets $M_1,M_2\ldots, M_n$ 
of $G$ consisting of elements not equal to $1$, we say that 
$M_1,M_2,\ldots, M_n$ are mutually reduced in $G$ 
if,
for each finite number of elements 
$g_1,g_2,\ldots, g_m\in\bigcup_{i=1}^{n}\widetilde{M_i}$,
whenever $g_1g_2\cdots g_m=1$, there exists $i\in [m]$ and $j\in[n]$ so that 
$g_i, g_{i+1}\in \widetilde{M_j}$.
If $M_i=\{x_i\}$ for $i\in\{1,\ldots, n\}$
and $M_1,M_2,\ld,M_n$ are mutually reduced,
then we say that $x_1,x_2,\ldots, x_n$ are mutually reduced.

Let $M_1$ and $M_2$ be non-empty subsets
of $G$ consisting of elements not equal to $1$.
If there exist subgroups $A$ and $B$ of $G$
such that 
$M_1\subseteq A$, $M_2\subseteq B$,
and if $AB$ is isomorphic to
the free product $A*B$ of $A$ and $B$,
then $M_1$ and $M_2$ are mutually reduced.
In addition, if $M_1=\{x_1,\ x_2,\
x_1^{-1}x_2\}$
and $M_2=\{y_1,\ y_2,\
y_1^{-1}y_2 \}$
are mutually reduced,
then two elements $x_1y_1^{-1}$ and $x_2y_2^{-1}$
freely generate a free subgroup.
In general, we have the following:

\begin{remark}\label{FSG}
Let $x_i$ and $y_i$, for $i\in I$, be distinct non-identity elements in $G$;
let $M_1:=\{x_i,\ x_i^{-1}x_j\ |\ i,j\in I,\ i\ne j\}$
and $M_2:=\{y_i,\ y_i^{-1}y_j\ |\ i,j\in I,\ i\ne j\}$.
If $M_1$ and $M_2$
are mutually reduced,
then 
$Z=\{z_{i},\ |\ i\in I \}$
is a set of free generators of the subgroup of $G$
generated by $Z$,
where $z_{i}=x_iy_i^{-1}$.
\end {remark}

For a subset $M$ of $G$
and element $x\in G$,
we denote by $M^x$ the set $\{x^{-1}fx \ |\ f\in M\}$.
Then, $(\ast)$ stated in the introduction
can be restated as follows:
\vskip3pt

\begin{tabularx}{12cm}{lX}
$(\ast)$
&For each subset $M$ of $G$ consisting of 
finite number of elements not equal to $1$,
there exist distinct $x_1, x_2, x_3\in G$
such that
$M^{x_i}$ is mutually reduced for each $i\in[3]$.\\
\end{tabularx}
\vskip3pt

In this section,
we will prove Theorem~\ref{MR_TH} 
after first providing four lemmas.
The first of these lemmas is a method established by Formanek \cite{For}
to prove the primitivity of group rings of free products.
We call the method,
which is based on the construction of comaximal ideals,
Formanek's method.
The second one is a basic result on primitive group rings
due to Passman \cite{Ps73}. The other two lemmas are our own, and proofs for them will be provided after their respective statements. 

\begin{lemma}\label{Ps}\mbox{{\rm (\cite[Theorem 2]{Ps73})}}
Let $K'$ be a field and $G$ be group.
If $\Delta(G)$ is trivial and $K'G$ is primitive,
then for any field extension $K$ of $K'$,
$KG$ is primitive.
\end{lemma}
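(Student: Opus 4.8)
The plan is to manufacture a faithful irreducible $KG$-module out of one for $K'G$ by extension of scalars, and to use $\Delta(G)=1$ only at the very end to control annihilators. Since $K'G$ is primitive, fix a faithful irreducible right $K'G$-module $V$ and set $D=\operatorname{End}_{K'G}(V)$, a division ring. Because $K$ is central in $KG=K'G\otimes_{K'}K$, the space $\overline{V}:=V\otimes_{K'}K$ is a right $KG$-module via $(v\otimes k)g=(vg)\otimes k$ for $g\in G$, with $K$ acting on the right tensor factor. Two quick observations come first. First, $\overline V$ is cyclic: if $V=v_0K'G$ then $\overline V=(v_0\otimes1)KG$. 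Second, $\overline V$ is faithful over $KG$: writing an annihilating element against a $K'$-basis $\{e_\alpha\}$ of $K$ with $e_0=1$ as $\sum_\alpha e_\alpha x_\alpha$ with $x_\alpha\in K'G$, applying it to $v\otimes1$ gives $\sum_\alpha(vx_\alpha)\otimes e_\alpha=0$, so $vx_\alpha=0$ for all $v$ and all $\alpha$, whence $x_\alpha=0$ by faithfulness of $V$.

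Next I would pass to an irreducible quotient and control its annihilator by restriction. As $\overline V$ is cyclic it admits a maximal submodule $N$; put $U:=\overline V/N$, a nonzero irreducible right $KG$-module. The key point is that, as a $K'G$-module, $\overline V|_{K'G}\cong\bigoplus_\alpha V$ is isotypic of type $[V]$; hence every $K'G$-subquotient, in particular $U|_{K'G}$, is again a nonzero direct sum of copies of $V$. Since $V$ is faithful, $\operatorname{ann}_{K'G}\!\big(U|_{K'G}\big)=\operatorname{ann}_{K'G}(V)=0$, and therefore $\operatorname{ann}_{KG}(U)\cap K'G=0$. To finish it then suffices to prove $P:=\operatorname{ann}_{KG}(U)=0$, for then $U$ is a faithful irreducible $KG$-module and $KG$ is primitive.

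The crux — and the only place the hypothesis $\Delta(G)=1$ enters — is to upgrade $P\cap K'G=0$ to $P=0$; it suffices to establish the general fact that, when $\Delta(G)=1$, every nonzero ideal of $KG$ meets $K'G$ nontrivially. I would prove this by a $\Delta$-method support-reduction argument: choose $0\neq\xi=\sum_{i=1}^n x_i\otimes e_i\in P$ with $x_i\in K'G\setminus\{0\}$, the $e_i\in K$ linearly independent over $K'$ and $e_1=1$, and with $n$ minimal; the goal is to force $n=1$, since then clearing the invertible central scalar $e_1$ gives $x_1\otimes1\in P\cap K'G$. Here $\Delta(G)=1$ is exactly what is needed: every nontrivial element of $G$ has an infinite conjugacy class, and one exploits this by conjugating $\xi$ and forming combinations $\sum_j\gamma_j\,g_j^{-1}\xi g_j\in P$ that cancel one $K'G$-coordinate while leaving another nonzero, contradicting minimality unless $n=1$. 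I expect this support reduction to be the main obstacle: arranging the conjugation so that one coordinate dies while another survives is precisely the technical content of the $\Delta$-methods, and the hypothesis cannot be weakened to mere primeness — already for $G=\mathbb{Z}$ (where $\Delta(G)=G\neq1$ and $KG$ is a domain) the ideal $(t-a)KG$ with $a$ transcendental over $K'$ is nonzero yet meets $K'G$ trivially, showing $\Delta(G)=1$ is essential for this intermediate step.
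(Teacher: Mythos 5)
First, a point of comparison: the paper offers no proof of this lemma at all --- it is quoted verbatim as \cite[Theorem 2]{Ps73}, so your attempt has to be measured against Passman's original argument rather than anything in this article. The first half of your proposal is correct and cleanly executed: $\overline V=V\otimes_{K'}K$ is cyclic and faithful, its restriction to $K'G$ is isotypic of type $[V]$, hence the irreducible quotient $U$ satisfies $\operatorname{ann}_{KG}(U)\cap K'G=0$, and the lemma reduces to showing that a nonzero ideal of $KG$ must meet $K'G$ when $\Delta(G)=1$. Your example $(t-a)K[t,t^{-1}]$ correctly shows this intermediate statement genuinely uses $\Delta(G)=1$.

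The gap is that this intermediate statement is never proved, and it is not a routine verification --- it is essentially the entire content of Passman's theorem. Your sketch proposes killing one coordinate of a minimal-length $\xi=\sum x_i\otimes e_i$ by combinations of \emph{conjugates} $\sum_j\gamma_j g_j^{-1}\xi g_j$; but conjugates only span the ``balanced'' products $g^{-1}x_ig$, and the obstruction to cancelling $x_n$ while preserving $x_1$ is exactly the possibility that every relation among the translates of $x_n$ is inherited by those of $x_1$. Handling this requires two-sided multiplications $a\xi b$ with $a,b\in K'G$ and then either Passman's bilinear $\Delta$-methods theorem or, equivalently, the fact that the extended centroid of the prime ring $K'G$ equals $K'$ when $\Delta(G)=1$ (one route: minimality makes the assignments $x_n\mapsto x_i$ into well-defined $K'G$-bimodule maps on the nonzero ideal of possible $n$-th coordinates, and such maps must be scalars in $K'$, forcing $\xi\in x_n\cdot K$ and hence $x_n\in I\cap K'G$). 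None of this machinery is supplied or cited, so as written the proof of the crux step is missing; the reduction surrounding it, however, is sound.
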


In what follows,
for the pair $v=(f,g)$ of elements $f$ and $g$ in $G$,
we denote the product $fg$ of $f$ and $g$ by $\tilde{v}$. 

\begin{lemma}\label{M_1}
Let $G$ be a non-trivial group, $m>0$, and $n>0$. 
For any distinct, non-trivial elements $f_{ij}$ in $G$, 
over $i\in[3]$ and $j\in[m]$,
and for distinct elements $g_i$ in $G$ 
over $i\in [n]$, we let $S_i:=\{f_{ij}\ |\ j\in [m]\}$ and set
$$\begin{array}{llll}
S&:=\bigcup_{i=1}^{3}S_i,\\
T&:=\{g_i\ |\ i\in[n]\},\\ 
V&:=S\times T,\\
M_i&:=\{f,\ f^{-1}f'\ 
|\  f,f'\in S_i \mbox{ with } f\ne f'\}\ (i\in[3]),\\
I&:=\{v\in V\ |\ \tilde{v}\ne \tilde{w}\
\mbox{ for any } w\in V \mbox{ with } w\ne v \}.
\end{array}$$
Then, if $M_1$, $M_2$ and $M_3$ are mutually reduced,
we have $|I|>n$.
\end{lemma}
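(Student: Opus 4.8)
The plan is to translate membership in $I$ into a statement about coincidences among the products $\tilde v = fg$, and then to bound those coincidences using a graph whose cycles are restricted by mutual reducedness. Writing $v=(f,g)$, one has $v\notin I$ precisely when $fg=f'g'$ for some $(f',g')\neq v$, i.e. when $f^{-1}f'=g(g')^{-1}$; since $f\mapsto fg$ is injective for fixed $g$, any such coincidence forces $f\neq f'$ and $g\neq g'$. First I would record all coincidences in a graph $\m{Q}$ with vertex set $T$, joining $g$ to $g'$ whenever $fg=f'g'$ for some $f,f'\in S$ and remembering both the blocks $S_i\ni f$, $S_j\ni f'$ and the transport element $f'^{-1}f=g'g^{-1}$. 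The fibres of $v\mapsto\tilde v$ of sizes $2$ and $3$ are exactly the edges and the triple coincidences, so a direct inclusion--exclusion (valid since, when each $|S_i|=1$, no fibre has more than three elements) gives $|I|=3n-2E+3\Delta$, where $E=|E(\m{Q})|$ and $\Delta$ is the number of triple coincidences; thus it suffices to bound $E$ from above.

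Here is the only place mutual reducedness is used. I claim $\m{Q}$ has no induced cycle of length $\geq 4$. A cycle $g_0,g_1,\dots,g_k=g_0$ is a closed walk in $G$, so the product $W$ of its transport elements equals $1$ automatically. Each transport element has the form $f_a^{-1}f_b$ with $a\neq b$, so, read as a word in the $f_i^{\pm1}$, $W$ has letters that strictly alternate between inverse and non-inverse; free reduction preserves this alternation and only ever cancels same-index opposite-sign pairs, hence the freely reduced word $\hat W$ never has two consecutive letters lying in a common $\widetilde{M_i}$. If such a cycle were induced and of length $\geq 4$, then $\hat W$ could not be empty: total cancellation would force every visited column to be entered and left through the same fibre vertex, which puts $g_{l-1}$ and $g_{l+1}$ in a common fibre and hence produces a chord, contradicting inducedness. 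So $\hat W$ would be a non-empty word in $\bigcup_i\widetilde{M_i}$, reduced in the sense of the definition yet equal to $1$ in $G$, contradicting that $M_1,M_2,M_3$ are mutually reduced. Therefore $\m{Q}$ is chordal, its cycle space is spanned by triangles, and (one checks, by the same alternation argument applied to a triangle, that every triangle is a triple coincidence) $E\leq n-c(\m{Q})+\Delta$.

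Feeding this into $|I|=3n-2E+3\Delta$ gives $|I|\geq n+2c(\m{Q})+\Delta\geq n+2>n$, since $c(\m{Q})\geq 1$ because $T\neq\emptyset$; this settles the case $m=1$. For general $m$ I would run the same scheme, tracking at each column the block of the fibre vertex used (a ``port''), the new features being that a coincidence with $f,f'$ in the same block contributes a single difference letter $f^{-1}f'\in\widetilde{M_i}$ instead of a two-letter piece, and that fibres may exceed three elements. The hard part, and the reason three mutually reduced sets are assumed rather than two, is exactly this bookkeeping for $m\geq 2$: one must route the extracted closed walk---using the availability of a third block at every junction---so that consecutive syllables always lie in distinct blocks and do not cancel, thereby keeping $\hat W$ reduced and driving the cyclomatic count to more than $n$ singleton fibres. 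That two blocks do not suffice is already visible in the base case, where with only two blocks the alternation cannot be preserved and the inequality $|I|>n$ genuinely fails.
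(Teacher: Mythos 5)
Your overall strategy is genuinely different from the paper's: you work with a ``quotient'' graph $\m{Q}$ on the $n$-element vertex set $T$, encode coincidences $\tilde v=\tilde w$ as (labelled) edges, and try to bound the number of non-isolated elements of $V$ by an Euler-characteristic count ($|I|=3n-2E+3\Delta$, then $E\leq n-c(\m{Q})+\Delta$ via chordality). The paper instead builds a graph on the $3mn$-element set $V=S\times T$ itself, makes it into an SR-graph $(\m{G},\m{H})$ where $\m{G}$ records equal products and $\m{H}$ consists of $n$ complete tripartite components $S\times\{g\}$, invokes Theorem~\ref{thm:multipart} (whose hypotheses are $|I(\m{G})|\leq n$ and $|V_g|=3m>2\mu(\m{H}_g)=2m$) to produce an alternating cycle under the assumption $|I|\leq n$, and reads off from that cycle a relation $f_{v_1}^{-1}f_{w_1}\cdots f_{v_s}^{-1}f_{w_s}=1$ violating mutual reducedness. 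Your $m=1$ argument is essentially a hand-rolled special case of that machinery (modulo a small unaddressed point: the identity $|I|=3n-2E+3\Delta$ needs edges of $\m{Q}$ counted with coincidence multiplicity, so you must rule out two distinct coincidences producing the same pair $\{g,g'\}$ -- which does follow from mutual reducedness, but is not checked).

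The genuine gap is the case $m\geq 2$, which is the case the lemma is actually needed for (in the proof of Theorem~\ref{MR_TH} it is applied with $S_i=F_b^{x_{bi}}$-type sets of arbitrary finite size $m=|F_b|$). For $m\geq2$ your counting identity breaks down -- fibres of $v\mapsto\tilde v$ can have up to $3m$ elements and contribute large cliques rather than single edges or triangles, coincidences within a single block $S_i$ produce one-letter syllables $f^{-1}f'\in\widetilde{M_i}$ that can sit adjacent to other $\widetilde{M_i}$-letters, and the chordality/cycle-space bound no longer converts into a bound on $E$ in terms of $\Delta$. Your final paragraph acknowledges all of this and describes what a proof ``must'' do (route a closed walk through a third block at every junction so the extracted word stays reduced), but does not do it. That routing problem is precisely the content of Theorem~\ref{thm:multipart}, whose proof in the paper occupies Lemmas~\ref{Add_isol} and~\ref{G_lem_kpart} plus a nontrivial induction on the number of components; it cannot be waved through as bookkeeping. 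As it stands, the proposal proves the lemma only for $m=1$.
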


\begin{proof}
Suppose, to the contrary, that $|I|\leq n$. We regard $V$ as a vertex set,
and define two edge sets:
$$\begin{array}{lll}
E :=\{ vw\ |\ v, w \in V,\
v\ne w \mbox{ and }, \tilde{v}=\tilde{w}\},\\
F :=\{vw\ |\ v\in S_i\times \{g\},\ w\in S_j\times \{g\}
\mbox{ with } i\ne j
\mbox{ for some } g\in T \}.\\
\end{array}$$
In order to utilize our work in Section~\ref{SRgraph}, let $\m{G}:=(V,E)$, $\m{H}:=(V,F)$, and $\m{S}:=(V,E,F)$; we begin by proving the following claim:

\begin{claim}\label{claimM1}
$\m{S}$ is an SR-graph which contains an SR-cycle. 
\end{claim}

\begin{proof}[Proof of Claim~\ref{claimM1}]
Let us begin by showing that $\m{S}$ is an SR-graph. Since each component of $\m{G}$ is clearly complete by definition, to see that $\m{S}$ is an SR-graph, we need only argue that $E\cap F=\emptyset$.  To see this, assume that $vw\in F$. Then, for $i\ne j$, $v=(f,g)\in S_i\times \{g\}$ and  $w=(h,g)\in S_j\times \{g\}$ for some $g\in T$; since $M_i$ and $M_j$ are mutually reduced and, in particular, $f^{-1}h\ne 1$,we have that $\tilde{v}\ne\tilde{w}$. Hence, $vw\notin E$, and so $\m{S}$ is an SR-graph. 

It remains to argue that $\m{S}$ must contain an SR-cycle. We do this by showing that $\m{S}$, $\m{G}$, and $\m{H}$ satisfy the hypotheses of Theorem~\ref{thm:multipart}.  To that end, let $V_g:=S\times\{g\}$ over $g\in T$. We first notice that $C(\m{H})=\{\m{H}[V_g]\mid g\in T\}$; in particular, $c(\m{H})=n$. Next, we notice that, by the definition of $F$, each $\m{H}[V_g]$ is a complete multipartite graph with exactly three $m$-vertex parts; in particular, over all $g\in T$, $\m{H}_g$ is complete multi-partite and $|V_g|=3m>2m=2\mu(\m{H}_g)$. It remains to show that $|I(\m{G})|\leq n$; as $I=I(\m{G})$ by definition, this follows from our assumption that $|I|\leq n$. 
\end{proof}

By Claim~\ref{claimM1}, there is SR-cycle in $\m{S}$, say $\m{C}=(V_{\m{C}},E_{\m{C}})$ with  $E_{\m{C}}=\{e_1,e_1^{*},\ldots, e_s,e_s^{*}\}$ in $\Sy$ such that $e_t=v_tw_t\in E$ and $e_t^{*}=w_tv_{t+1}\in F$ over $t\in[s]$, and $v_{s+1}=v_1$. Let $v_t=(f_{v_t},g_{v_t})\in S_{i_t}\times \{g_{v_t}\}$ and $w_t=(f_{w_t},g_{w_t})\in S_{j_t}\times g_{w_t}$.
By the definition of $E$, $v_tw_t\in E$ implies that $f_{v_t}g_{v_t}=f_{w_t}g_{w_t}$, and therefore, if $f_{v_t}=f_{w_t}$ then $g_{v_t}=g_{w_t}$, which contradicts the fact $v_t\ne w_t$. Hence we have that
\begin{equation}\label{R_C_3}
f_{v_t}\ne f_{w_t}.
\end{equation}
On the other hand,
$w_tv_{t+1}\in F$ implies that
$g_{w_t}=g_{v_{t+1}}$ and
\begin{equation}\label{R_C_4}
j_t\ne i_{t+1}.
\end{equation}
Since $g_{w_t}=g_{v_{t+1}}$, it follows that
$$\begin{array}{lllll}
f_{v_1}g_{v_1}=&f_{w_1}g_{w_1},&\\
                     &f_{v_2}g_{w_1}=&f_{w_2}g_{w_2},\\
&&\hskip1cm\ddots\\
&&f_{v_s}g_{w_{s-1}}=f_{w_s}g_{w_s},\\
\end{array}
$$
and $g_{w_s}=g_{v_1}$. Solving these equations yields 
$$f_{v_1}^{-1}f_{w_1}f_{v_2}^{-1}f_{w_2}
\cdots f_{v_s}^{-1}f_{w_s}=1.$$
However, since $f_{v_{t}}^{-1}\in M_{i_{t}}$
and $f_{w_{t}}\in M_{j_{t}}$,
if $i_{t}=j_{t}$ then 
$f_{v_{t}}^{-1}f_{w_{t}}\in M_{j_{t}}$
with $f_{v_{t}}^{-1}f_{w_{t}}\ne 1$ by (\ref{R_C_3}).
Moreover, $f_{v_{t+1}}^{-1}f_{w_{t+1}}\in M_{i_{t+1}}$ with $j_t\ne i_{t+1}$
by (\ref{R_C_4}),
which contradicts the hypothesis that
$M_{1}$, $M_{2}$ and $M_{3}$
are mutually reduced. Thus, we have reached the desired contradiction, and our proof is complete. 
\end{proof}

\begin{lemma}\label{M_2}
Let $G$ be a non-trivial group and $n>0$.
For each $i\in[n]$, 
let $f_{i1},\ldots, f_{im_i}$ be distinct elements of $G$,
$f_{ip}\ne f_{iq}$ for $p\ne q$, and let
$x_{il}$, $i\in[n]$ and $l\in[3]$,
be distinct elements in $G$. We set
$$\begin{array}{llll}
S&:=\bigcup_{i=1}^{n}S_i,
\mbox{ where } S_i:=\{f_{ij}\ |\ j\in[m_i]\},\\
X&:=\bigcup_{i=1}^{n}X_i,
\mbox{ where }X_i=\{x_{il}\ |\ l\in[3]\},\\
V&:=\bigcup_{i=1}^{n}V_i,
\mbox{ where }V_i=X_i\times S_i,\\
I&:=\{v\in V\ |\ \tilde{v}\ne \tilde{w}\
\mbox{ for any } w\in V \mbox{ with } w\ne v \}.
\end{array}$$
If $x_{ij}$ are mutually reduced elements over $i\in[n]$ and $j\in[m_i]$,
then $|I|>m$,
where $m:=m_1+\cdots+m_n$.
\end{lemma}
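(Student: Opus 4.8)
The plan is to follow the template of the proof of Lemma~\ref{M_1}. Assume, toward a contradiction, that $|I|\le m$. Regard $V$ as a vertex set and define
$$E:=\{vw\mid v,w\in V,\ v\ne w,\ \tilde v=\tilde w\},\qquad F:=\{vw\mid v,w\in X_i\times\{f\},\ v\ne w,\ \text{for some }i\in[n],\ f\in S_i\},$$
and put $\m{G}:=(V,E)$, $\m{H}:=(V,F)$, $\m{S}:=(V,E,F)$. Since $\tilde v=\tilde w$ is an equivalence relation, the components of $\m{G}$ are complete; and since the sets $X_i\times\{f_{ij}\}$ are pairwise disjoint (the $x_{il}$ being distinct), the components of $\m{H}$ are exactly these triangles, one for each pair $(i,j)$ with $i\in[n]$, $j\in[m_i]$, so $\m{H}$ has $m=\sum_{i=1}^{n}m_i$ components.

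First I would confirm that $\m{S}$ is an SR-graph: if $vw\in F$ then $v=(x_{il},f)$ and $w=(x_{il'},f)$ with $l\ne l'$, so $\tilde v=x_{il}f\ne x_{il'}f=\tilde w$ as the $x_{il}$ are distinct, giving $vw\notin E$; hence $E\cap F=\emptyset$. I would then check the hypotheses of Theorem~\ref{thm:multipart}, whose component count here is $m$: each component $X_i\times\{f_{ij}\}$ of $\m{H}$ is $K_{1,1,1}$, hence complete multipartite, with $|X_i\times\{f_{ij}\}|=3>2=2\mu(K_{1,1,1})$; and $|I(\m{G})|=|I|\le m$ is our standing assumption. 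Theorem~\ref{thm:multipart} then produces an SR-cycle $\m{C}$, with edges $e_t=v_tw_t\in E$ and $e_t^{*}=w_tv_{t+1}\in F$ over $t\in[s]$, where $v_{s+1}=v_1$.

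Next I would read off a group relation. Writing $v_t=(x_{v_t},f_{v_t})$ and $w_t=(x_{w_t},f_{w_t})$, the edge $e_t\in E$ gives $x_{v_t}f_{v_t}=x_{w_t}f_{w_t}$, that is $x_{v_t}^{-1}x_{w_t}=f_{v_t}f_{w_t}^{-1}$, while $e_t^{*}\in F$ forces the two endpoints to share their second coordinate, so $f_{w_t}=f_{v_{t+1}}$ (and $f_{w_s}=f_{v_1}$). Forming the product over $t$ and telescoping the $f$'s yields
$$x_{v_1}^{-1}x_{w_1}\,x_{v_2}^{-1}x_{w_2}\cdots x_{v_s}^{-1}x_{w_s}=1.$$

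Finally I would contradict the mutual reducedness of the $x_{il}$. Every factor lies in some $\widetilde{\{x_{il}\}}$, so mutual reducedness would force two \emph{consecutive} factors into a common $\widetilde{\{x_{il}\}}=\{x_{il},x_{il}^{-1}\}$. Within an $E$-edge one has $x_{v_t}\ne x_{w_t}$ (otherwise $x_{v_t}f_{v_t}=x_{w_t}f_{w_t}$ gives $f_{v_t}=f_{w_t}$ and $v_t=w_t$), and across an $F$-edge one has $x_{w_t}\ne x_{v_{t+1}}$ (distinct vertices of one $\m{H}$-component differ in their first coordinate); these are the analogues of (\ref{R_C_3}) and (\ref{R_C_4}). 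I expect the delicate point---just as in Lemma~\ref{M_1}---to be this closing bookkeeping: one must verify that consecutive factors, coming from distinct generators, genuinely cannot share a common $\widetilde{\{x_{il}\}}$, so that the displayed relation really is forbidden and the contradiction is reached. The structural idea that makes everything go through is the choice of $F$ forcing the $\m{H}$-components to be the $m$ triangles $X_i\times\{f_{ij}\}$, which both matches the component count to the target $m$ and supplies the inequality $3>2$ that Theorem~\ref{thm:multipart} requires.
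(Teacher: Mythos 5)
Your proposal is correct, and its overall skeleton (contradiction, SR-graph construction, extraction of the relation $x_{v_1}^{-1}x_{w_1}\cdots x_{v_s}^{-1}x_{w_s}=1$ with $x_{v_t}\ne x_{w_t}\ne x_{v_{t+1}}$, contradiction with mutual reducedness) is exactly the paper's. The one genuine divergence is the step that produces the SR-cycle: you invoke Theorem~\ref{thm:multipart}, viewing each triangle $X_i\times\{f_{ij}\}$ as $K_{1,1,1}$ and checking $3>2=2\mu(K_{1,1,1})$ together with $|I|\le m=c(\m{H})$, whereas the paper invokes Theorem~\ref{thm:complete}, which requires instead verifying $c(\m{G})+c(\m{H})<|V|+1$ via the counting bound $c(\m{G})\le |V|/3+\tfrac12\cdot\tfrac23|V|=\tfrac23|V|$, and also requires connectedness of $\m{S}$ (handled there by passing to a suitable connected component). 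Your route is arguably cleaner: Theorem~\ref{thm:multipart} carries no connectivity hypothesis, so you sidestep that reduction entirely, and the numerical verification is immediate; the cost is that you lean on the harder of the two graph theorems where the easier one suffices. The closing bookkeeping you flag as delicate is resolved exactly as you anticipate and exactly as in the paper: an $E$-edge forces $x_{v_t}\ne x_{w_t}$ (else $v_t=w_t$), an $F$-edge forces $x_{w_t}\ne x_{v_{t+1}}$ (distinct vertices of $X_i\times\{f\}$ differ in the first coordinate), and the paper then concludes the contradiction with mutual reducedness at the same level of detail you give.
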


\begin{proof}
Suppose, to the contrary, that $|I|\leq m$. We regard $V$ as a vertex set,
and set 
$$\begin{array}{lll}
&E :=\{ vw\ |\ v, w \in V,\
v\ne w \mbox{ and } \tilde{v}=\tilde{w}\},\\
&F :=\{vw\ |\ v, w\in V_i(f),\ v\ne w,
\mbox{ for some } f\in S_i
\mbox{ and }
i\in[n] \},
\end{array}$$
where $V_i(f)=X_i\times \{f\}$ for $i\in[n]$ and $f\in S_i$.
Note that $|V|=3nm$; in particular, 
$(x_{il},f_{ij})=(x_{ps},f_{pq})$
if and only if $(i,l,j)=(p,s,q)$.
In order to utilize our work in Section~\ref{SRgraph}, let $\m{G}:=(V,E)$, $\m{H}:=(V,F)$, and $\m{S}:=(V,E,F)$; we begin by proving the following claim:

\begin{claim}\label{claimM2}
$\m{S}$ is an SR-graph which contains an SR-cycle. 
\end{claim}

\begin{proof}[Proof of Claim~\ref{claimM2}]
Following similar arguments to those used in the first paragraph of the proof of Claim~\ref{claimM1}, we can see that $\m{S}$ is an SR-graph; so, our task is to show that $\m{S}$ has an SR-cycle. By definition of $E$, we have $I=I(\m{G})$; by definition of $F$, we have $C(\m{H})=\{\HGh[V_i(f)]\mid f\in S_i,\,i\in[n]\}$. Moreover, each $\HGh[V_i(f)]\in C(\m{H})$ is clearly a complete $3$-vertex graph by definition, and in particular, $c(\m{H})=|V|/3$. 
Thus, our proof is complete by Theorem~\ref{thm:complete} if $c(\m{G})+c(\m{H})<|V|+1$; as $c(\m{H})=|V|/3$, this holds if
\begin{equation}
c(\m{G})<\frac{2}{3}|V|+1. \label{asumsim}
\end{equation}

Because there exists a connected component satisfying (\ref{asumsim}) whenever $\m{S}$ satisfies (\ref{asumsim}).

Now, since $|I|\leq m=c(\m{H})=|V|/3$ by assumption, $\m{G}$ can have at most $|V|/3$ one-vertex components (while all other components of $\m{G}$ have at least two vertices); thus, since the components of $\m{G}$ partition $\m{S}$, $c(\m{G})\leq |V|/3 + (1/2)(2|V|/3)=2|V|/3$, and so (\ref{asumsim}) holds. 
\end{proof}

By Claim~\ref{claimM2}, there is SR-cycle in $\m{S}$, say $\m{C}=(V_{\m{C}},E_{\m{C}})$ with  $E_{\m{C}}=\{e_1,e_1^{*},\ldots, e_s,e_s^{*}\}$ such that $e_t=v_tw_t\in E$, $e_t^{*}=w_tv_{t+1}\in F$, for $t\in[s]$, and $v_{s+1}=v_1$. Let $v_t=(x_{t},f_{t})\in V_{i_t}(f_{t})$ with $f_{t}\in S_{i_t}$ and $w_t=(y_{t},g_{t})\in V_{j_t}(g_{t})$ with $g_{t}\in S_{j_t}$.
By the definition of $E$, $v_tw_t\in E$ implies that $v_t\ne w_t$ and $x_{t}f_{t}=y_{t}g_{t}$,
and so $x_{t}\ne y_{t}$. In addition, $w_tv_{t+1}\in F$ implies that
$j_{t}=i_{t+1}$, $g_t=f_{t+1}$ and $y_{t}\ne x_{t+1}$. Hence, 
$$\begin{array}{lllll}
x_{1}f_{1}=&y_{1}g_{1},&\\
                     &x_{2}g_{1}=&y_{2}g_{2},\\
&&\hskip1cm\ddots\\
&&x_{s}g_{s-1}=y_{s}g_{s}
\quad\mbox{ and }\quad g_{s}=f_{1},\\
\end{array}
$$
where $x_{t}\ne y_{t}\ne x_{t+1}$.
Eliminating $f_1$ and $g_{t}$'s in the above equations,
we get
$$x_{1}^{-1}y_{1}x_{2}^{-1}y_{2}
\cdots x_{s}^{-1}y_{s}=1.$$
But this contradicts the hypothesis that
$x_{i}$'s and $y_{i}$'s are mutually reduced.
\end{proof}

With these lemmas in place, we are now in the position to prove Theorem \ref{MR_TH}.

\begin{proof}[Proof of Theorem~\ref{MR_TH}]
Let $B$ be the basis of a non-abelian free subgroup of $G$ whose cardinality is the same as that of $G$.
If $|G|>\aleph_0$ then $|G|=|B|$,
and in addition,
a two generator free group always contains a free subgroup
generated by infinitely many generators.
Thereby,
we may assume that the cardinality of $B$ is also the same as $G$.
In addition,
since $|R|\leq |G|$, we have that $|B|=|RG|$.
We can divide $B$ into three subsets $B_1$, $B_2$ and $B_3$
each of whose cardinality is $|B|$.
It is then obvious that
the elements in $B$
are mutually reduced.
Let $\varphi$ be a bijection from $B$ to $RG\setminus \{0\}$
and $\sigma_s$ a bijection from $B$ to $B_s$, $s\in[3]$.
For $b\in B$, we denote $\sigma_s(b)$ by $b_s$.

For $b\in B$,
let $\varphi(b)=\sum_{f\in F_b}\alpha_{f}f$,
where $\alpha_f\in R$
and $F_b=Supp(\varphi(b))$ is the support of $\varphi(b)$.
We set
$$M_{b}=\{ f^{\pm 1},\ f^{-1}f^{\prime}\
|\  f,f^{\prime}\in F_b, f\ne f^{\prime}\}.$$
As $G$ satisfies $(\ast)$,
there are $x_{b1}, x_{b2}, x_{b3}\in G$
with 
$$M^{x_{bt}}_{b}=\{ x_{bt}^{-1}f^{\pm 1}x_{bt},\
x_{bt}^{-1}f^{-1}f^{\prime}x_{bt}\ |\
f, f^{\prime}\in F_b,\ f\ne f^{\prime}\}\quad (t\in[3])$$
are mutually reduced.
We next define $\varepsilon(b)$ and $\varepsilon^{1}(b)$ by
\begin{equation}\label{MRP_1}
\varepsilon(b)=\sum_{s=1}^{3}\sum_{t=1}^{3}
b_sx_{bt}^{-1}\varphi(b)x_{bt}\
\mbox{ and }\
\varepsilon^{1}(b)=\varepsilon(b)+1.
\end{equation}
Note that $\varepsilon(b)$ is an element
in the ideal of $RG$ generated by $\varphi(b)$.
Let $\rho=\sum_{b\in B}\varepsilon^{1}(b)RG$ be the right ideal 
generated by $\varepsilon^{1}(b)$ for all $b\in B$.
If $w\in\rho$,
then 
we can express $w$ by
\begin{equation}\label{MRP_2}
w=\sum_{b\in A}\varepsilon^{1}(b)u_b
=w_1+w_2,
\mbox{ where } w_1=\sum_{b\in A}\varepsilon(b)u_b
\mbox { and } w_2=\sum_{b\in A}u_b,
\end{equation}
for some non-empty finite subset $A$ of $B$
and $u_b$ in $RG$.
According to Formanek's method; Lemma \ref{Fmethod},
in order to prove that $RG$ is primitive,
we need only to show that $\rho$ is proper; $\rho\ne RG$.
To do this,
it suffices to show that $w\ne 1$.

Let 
$u_b=\sum_{h\in H_b}\beta_{h}h$,
where $H_b=Supp(u_b)$ and $\beta_{h}\in R$.
Substituting $\varphi(b)=\sum_{f\in F_b}\alpha_{f}f$ 
into (\ref{MRP_1}), we obtain the following expression
of $\varepsilon(b)u_b$:
\begin{equation}\label{MRP_3}
\varepsilon(b)u_b
=\sum_{s=1}^{3}b_sE_{b},
\mbox{ where } E_{b}=\sum_{t=1}^{3}\sum_{f\in F_b}\sum_{h\in H_b}
\alpha_{f}\beta_{h}x_{bt}^{-1}fx_{bt}h.
\end{equation}
We can see that there exist
more than $|H_b|$ isolated elements
in the expression (\ref{MRP_3}) of $E_{b}$;
that is,
$m_b>|H_b|$,
where $m_b=|Supp(E_{b})|$.
In fact, 
let $X_b=\{x_{b1}, x_{b2}, x_{b3}\}$,
$\Gamma_b=X_b\times F_b\times H_b$, and
$$I_b=\{c\in \Gamma_b\ |\ \tilde{c}\ne \tilde{c'}
\mbox{ for any } c'\in\Gamma_b
\mbox{ with } c'\ne c\},$$
where $\tilde{c}=x_{bt}^{-1}fx_{bt}h$ for $c=(x_{bt},f,h)$.
Since $M^{x_{bt}}_{b}$
$(t\in[3])$ are mutually reduced,
taking $I_b$ as $I$, $H_b$ as $T$ and $\Gamma_b$ as $V$ in lemma \ref{M_1},
it follows from lemma \ref{M_1}
that $|I_b|>|H_b|$ and thus $m_b>|H_b|$
because of $m_b\geq |I_b|$, as desired.
Now,
since $b_s$ $(b\in A, 1\leq s\leq 3)$
are mutually reduced,
by Lemma \ref{M_2},
taking $|A|$ as $n$, $Supp(E_{b})$ as $S_i$, 
and $\{b_1,b_2,b_3\}$ as $X_i$ in Lemma \ref{M_2},
we have 
$|Supp(w_1)|
>\sum_{b\in A}m_b$.
Hence we have that
\begin{equation*}
|Supp(w)|\geq |Supp(w_1)|-|Supp(w_2)|>\sum_{b\in A}m_b-\sum_{b\in A}|H_b|>0,
\end{equation*}
which implies $|Supp(w)|\geq 2$. In particular, $w\ne 1$. Thus, $RG$ is primitive.

Finally,
we shall show that $KG$ is primitive for any field $K$.
Let $K^{\prime}$ be a prime field.
Since $G$ satisfies $(\ast)$ and $|K^{\prime}|\leq |G|$,
we have already seen that $K^{\prime}G$ is primitive.
By Lemma \ref{Ps}, it suffices to show that $\Delta(G)=1$. Let $g$ be a nonidentity element in $G$.
We can see that there exist infinitely many conjugate elements of $g$.
In fact,
if this is not the case,
then the set $M$ of conjugate elements of $g$ in $G$
is a finite set.
Since $G$ satisfies $(\ast)$, for $M$,
there exists $x_1, x_2\in G$ such that
$M^{x_1}$ and $M^{x_2}$ are mutually reduced.
Since $g$ is in $M$,
$(x_1^{-1}gx_1)(x_2^{-1}fx_2)^{-1}\ne 1$ for any $f\in M$,
and thus $x_1^{-1}gx_1\ne x_2^{-1}fx_2$.
Hence $(x_1x_2^{-1})^{-1}g(x_1x_2^{-1})\ne f$ for any $f\in M$,
which implies $(x_1x_2^{-1})^{-1}g(x_1x_2^{-1})\not\in M$, 
a contradiction.
\qed
\vskip12pt

Let $G$ be a countably infinite group
and  $g_1, g_2\in G$ with $g_1\ne g_2$.
If $G$ satisfies $(\ast)$,
then for $M=\{g_i^{\pm 1}, g_i^{-1}g_j\ |\ i,j=1,2, i\ne j\}$,
there exist $x_1, x_2\in G$ such that $M^{x_1}$ and $M^{x_2}$
are mutually reduced. By Remark \ref{FSG},
$\langle z_1, z_2\rangle$ is a free subgroup of $G$,
where $z_i=x_1^{-1}g_ix_1x_2^{-1}g_i^{-1}x_2$.
Hence in Theorem \ref{MR_TH},
the assumption on existence of a free subgroup 
is not needed in the case of $|G|=\aleph_0$.
\end{proof}


\section{HNN extensions and amalgamated free products}
\label{PAFP}

In this section, we use Theorem \ref{MR_TH} to establish results concerning the primitivity of group algebras
of HNN extensions and amalgamated free products;  we extend results from
\cite{Bal}, \cite{For}, \cite{Ni07}, and \cite{Ni11}. It is easy to see that a non-abelian free group, and more generally,
a non-abelian locally free group,
satisfies $(\ast)$.
In fact, if $h_1,\ldots,h_m$ are elements of a locally free group $H$ for any $m\in\N$,
then they lie in a free subgroup $\langle X\rangle$ of $H$
generated by a base set $X$ with $|X|>1$.
For $x, y\in X$ with $x\ne y$,
let $x_i:=x^{2p+i}yx^{2p+i}$ $(i\in[3])$
where $p$ is the maximum number of all the lengths of $h_j$s, over $j\in[m]$, with respect to $X$. We see that the associated $M^{x_i}$s are mutually reduced. Using Theorem \ref{MR_TH},
we can reprove the main theorem in \cite{Ni11}:
$KH$ is primitive for any field $K$ provided
$H$ has a free subgroup 
whose cardinality is the same as that of $H$.

Let $G$ be a group. For subgroups $A$ and $B$ of $G$,
let $G^{*}=\langle G, t\ |\ t^{-1}at=\varphi(a),\ a\in A\rangle$
be an HNN extension with base $G$ and a stable letter $t$,
where $\varphi:A\rightarrow B$ is an isomorphism. For $g_0,\ldots g_n\in G$ and $\varepsilon_i=\pm 1$, where $n\in\N_0$ and $i\in[n]$,
a sequence $g_0, t^{\varepsilon_1}, \ldots, t^{\varepsilon_n}, g_n$
is said to be reduced if there is no consecutive subsequence
$t^{-1}, g_i, t$ with $g_i\in A$
or $t, g_i, t^{-1}$ with $g_i\in B$.
For $u=g_0t^{\varepsilon_1}\cdots t^{\varepsilon_n}g_n\in G^{*}$,
if $g_0, t^{\varepsilon_1}, \ldots t^{\varepsilon_n}, g_n$
is reduced, then we say that the product
$g_0t^{\varepsilon_1}\cdots t^{\varepsilon_n}g_n$ 
is a reduced form of $u$.
By the normal form theorem for HNN extensions,
if $u=g_0t^{\varepsilon_1}\cdots t^{\varepsilon_n}g_n=1$,
then either $n=0$ and $g_0=1$
or $n\geq 1$ and $u$ is not reduced.
Moreover, if $u\in G^{*}$,
then $u$ is always uniquely expressed by
the normal form, which is a reduced form,
as follows:
$$u=g_0t^{\varepsilon_1}
\cdots t^{\varepsilon_n}g_n,$$
where
(i)\
$g_0$ is arbitrary element in $G$,
(ii)\
if $\varepsilon_i=-1$, then $g_i$ 
is representative of a right coset of $A$ in $G$,
(iii)\
if $\varepsilon_i=+1$, then $g_i$ 
is representative of a right coset of $B$ in $G$,
and (iv)\
there is no consecutive subsequence $t^{-1}1t$ or $t1t^{-1}$.
In the above, as usual, $1$ is the representative of both $A$ and $B$.
In what follows, for $l_1,\ldots, l_n\in\Zh$,
whenever we say that $u=g_0t^{l_1}\cdots t^{l_n}g_n\ne 1$
is the normal form (resp. a reduced form) of $u$,
it means that it is the normal form (resp. a reduced form),
and also that
$$\begin{array}{lll}
l_i\ne 0 \mbox{ for each } i\in[n] & \mbox{if } n>0,\\
g_0\ne 1 & \mbox{if } n=0,\\
g_i\ne 1 \mbox{ for } 0<i<n & \mbox{if } n>1.\\
\end{array}$$

If $A=G$, then $G^{*}$ is said to be an ascending HNN extension of $G$.
In this case,
$G^{*}$ is isomorphic to the cyclic extension of $G_{\infty}$,
where $G_{\infty}=\cup_{i=1}^{\infty}t^iGt^{-i}$.
In addition,
if $B\subsetneq G$ then $G^{*}$ is called a strictly ascending
HNN extension of $G$.
In \cite{Ni07}, one of the present authors proved
that $KG$ is primitive for any field $K$,
provided that $G^{*}$ is a strictly ascending
HNN extension of a non-abelian free group $G$.
We can generalize this result as follows:

\begin{theorem}\label{HNN_TH}
Let $G$ be a group.
For nontrivial subgroups $A$ and $B$ of $G$,
let $G^{*}=\langle G, t\ |\ t^{-1}at=\varphi(a),\ a\in A\rangle$
be an HNN extension with base $G$ and a stable letter $t$,
where $\varphi:A\rightarrow B$ is an isomorphism.
\begin{enumerate}[(1)]
\item If $A\cup B\subsetneq G$ and
there exists $g\in G$ such that
either $g^{-1}Ag\cap A=1$ or $g^{-1}Bg\cap B=1$,
then $KG^{*}$ is primitive for any field $K$. \label{thmfouroneone}
\item Suppose that $G^{*}$ 
has a free subgroup whose cardinality is the same as that of $G$. 
If $A=G$, $B\subsetneq G$, and $G$ satisfies $(\ast)$,
then $KG^{*}$ is primitive for any field $K$. \label{thmfouronetwo}
\end{enumerate}
\end{theorem}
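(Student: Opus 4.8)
The plan is to derive both statements from Theorem~\ref{MR_TH} applied to $G^{*}$: it suffices to prove that $G^{*}$ contains a non-abelian free subgroup of cardinality $|G^{*}|$ and that $G^{*}$ satisfies $(\ast)$, whence the group algebra $KG^{*}$ of the HNN extension is primitive for every field $K$. Throughout I would work with the unique normal form $g_{0}t^{l_{1}}\cdots t^{l_{n}}g_{n}$ of each element and with the normal form theorem for HNN extensions (Britton's lemma) recalled above, measuring the complexity of an element by its number $n$ of $t$-syllables. The guiding idea is to conjugate the elements of a given finite set by words built from the stable letter $t$, so that the conjugates are padded by long runs of $t$ and thereby become mutually reduced.

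For the free subgroup there is nothing extra to do in the countable case: by the remark closing the proof of Theorem~\ref{MR_TH}, once $(\ast)$ holds for the countable group $G^{*}$, a free subgroup of cardinality $\aleph_{0}=|G^{*}|$ is automatic. In part~(2) the free subgroup is assumed outright. For part~(1) with $|G|$ uncountable I would build one by the very device that yields $(\ast)$: fixing $c\in G\setminus(A\cup B)$ (available since $A\cup B\subsetneq G$) and a family of conjugators indexed by the $|G|$ elements of $G$, Britton's lemma together with the hypothesis $g^{-1}Ag\cap A=1$ (or its analogue for $B$) shows the relevant reduced words cannot collapse; Remark~\ref{FSG} then packages these into a free subgroup of cardinality $|G|=|G^{*}|$.

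The core of the argument is therefore $(\ast)$. Given a finite $M\subseteq G^{*}\setminus\{1\}$, let $N$ exceed every $t$-syllable length occurring in $M$. In part~(1) I would take the three conjugators to be widely separated powers $x_{j}=t^{N k_{j}}$ (replacing them by words such as $(t^{N}c)^{k_{j}}$ when base-group interference must be broken), so that each conjugate $x_{j}^{-1}fx_{j}$ begins and ends with a long $t$-run. Examining a putative relation $(x_{\sigma(1)}^{-1}g_{1}x_{\sigma(1)})\cdots(x_{\sigma(m)}^{-1}g_{m}x_{\sigma(m)})=1$, Britton's lemma forces a pinch $t^{-1}at$ or $tbt^{-1}$ at some junction between consecutive factors; the malnormality hypothesis guarantees that no such pinch survives across factors carrying different conjugators, so a reduction can occur only when $\sigma(i)=\sigma(i+1)$, which is precisely the conclusion required by $(\ast)$.

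In part~(2), where $A=G$ and $B\subsetneq G$, I would use the ascending decomposition $G^{*}=G_{\infty}\rtimes\langle t\rangle$ with $G_{\infty}=\bigcup_{n}t^{n}Gt^{-n}$. Every finite subset of $G_{\infty}$ lies in a single level $t^{n}Gt^{-n}\cong G$, so the hypothesis that $G$ satisfies $(\ast)$ transfers to $G_{\infty}$ by the same local argument that makes locally free groups satisfy $(\ast)$ in Section~\ref{PAFP}; the conjugators for the $G_{\infty}$-part of $M$ are produced by raising $G$'s own witnesses to a common level, and these are then combined with high powers of $t$ to treat the genuinely $t$-involving elements. The same Britton's-lemma analysis yields mutual reducedness, and Theorem~\ref{MR_TH} delivers primitivity. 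I expect the hard part to be the bookkeeping at the junctions between consecutive conjugated factors — tracking how the pinches $t^{-1}at\mapsto\varphi(a)$ and $tbt^{-1}\mapsto\varphi^{-1}(b)$ propagate through a product and ruling them out across distinct conjugators; the malnormality in~(1) and the $(\ast)$-property of $G$ in~(2) are exactly the inputs that close off the residual base-group cancellations, and the delicate point is to choose $N$, the separations $k_{j}$, and the common level $n$ large enough that the $t$-padding cannot be entirely consumed by these reductions.
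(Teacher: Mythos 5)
Your overall strategy for part (1) --- conjugate the finite set $M$ by $t$-padded words and combine Britton's lemma with the malnormality hypothesis --- is the paper's strategy, but the conjugators you actually propose do not work, and the fix is exactly where the content of the proof lies. With $x_j=t^{Nk_j}$, a conjugate $x_j^{-1}ux_j$ of an element $u$ whose $\varphi^{\pm1}$-iterates stay in $A$ (resp.\ $B$) telescopes completely into the base group (e.g.\ if $u\in A\cap B$ with $\varphi(u)=u$ then $x_j^{-1}ux_j=u$ for every $j$, and $(\ast)$ fails outright for $M=\{u\}$); all the $t$-padding is consumed and Britton's lemma says nothing at the junctions. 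Your parenthetical correction $(t^Nc)^{k_j}$ with $c\notin A\cup B$ does not repair this, because $c\notin A\cup B$ does not prevent $c^{-1}bc$ from landing back in $A$, so the collapse can continue through $c$. The paper's conjugators are $x_i=t^{-q_i}gth^{-1}t^{q_i}$ with $h\in G\setminus(A\cup B)$: the malnormality element $g$ is placed \emph{inside} the conjugator precisely so that once $t^{q_i}ut^{-q_i}$ has telescoped to a nontrivial element of $A$, conjugation by $g$ expels it from $A$ and freezes the reduction, while $h\notin A\cup B$ keeps products of conjugates with distinct indices reduced at the junctions. Establishing the reduced form $t^{-q_i}ht^{-1}wth^{-1}t^{q_j}$ for each product (via the explicit computation of $\varphi^{-q_i-(l_1+\cdots+l_n)}(a_nu_n)$) is the heart of the argument and is absent from your sketch.

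Part (2) has a more serious gap: you propose to verify $(\ast)$ for $G^{*}$ itself and apply Theorem~\ref{MR_TH} to $G^{*}$, but when $A=G$ \emph{every} pinch $t^{-1}gt$ reduces, so Britton's lemma never obstructs a cancellation and high powers of $t$ create no surviving padding at all ($t^{-N}gt^{N}=\varphi^{N}(g)\in B\subseteq G$). The paper does not attempt to prove $(\ast)$ for $G^{*}$. It instead passes to the normal subgroup $G_{\infty}=\bigcup_{i\geq0}t^{i}Gt^{-i}$: any finite subset of $G_{\infty}$ lies in a single $t^{i}Gt^{-i}\cong G$, so $G_{\infty}$ satisfies $(\ast)$ and $KG_{\infty}$ is primitive by Theorem~\ref{MR_TH}; primitivity is then lifted to $KG^{*}$ by Zalesskii's lemma (Lemma~\ref{PBR}), which requires $\Delta(G^{*}/G_{\infty})=G^{*}/G_{\infty}$ (clear, since the quotient is cyclic) and $\Delta(G^{*})=1$. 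That last condition is proved by a separate, nontrivial computation with the normal form $g=t^{n}ft^{-l}$ and the relations $t^{m}gt^{-m}=g$ and $(th)^{m}g(th)^{-m}=g$. Both the reduction through $G_{\infty}$ and the verification that $\Delta(G^{*})=1$ are missing from your proposal, and without them your argument for (2) does not close.
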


The next basic result on group rings
is needed in the proof below.
We refer the reader to Passman \cite{Ps77}
for a detailed discussion of this topic.

\begin{lemma}\label{PBR}\mbox{{\rm (\cite[Theorem~1]{Zal})}}
Let $K$ be a field, $G$ a group, and $N$ a normal subgroup of $G$ with $\Delta(G)=1$ and $\Delta(G/N)=G/N$. If $KN$ is primitive, then so is $KG$. \\
\end{lemma}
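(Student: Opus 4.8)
The plan is to prove that $KG$ is primitive by exhibiting a faithful irreducible right $KG$-module, using the primitivity of $KN$ as the starting point and the two hypotheses on $\Delta$ to promote a faithful but reducible module to a faithful irreducible one. Since $KN$ is primitive, I would fix a faithful irreducible right $KN$-module $V$, say $V\cong KN/L$ for a maximal right ideal $L$ of $KN$ with zero core, and induce to $G$: set $V^G:=V\otimes_{KN}KG$, so that $V^G\cong KG/LKG$ as right $KG$-modules. Choosing a transversal $T$ for $N$ in $G$ gives $V^G=\bigoplus_{t\in T}V\otimes t$, and for $\alpha=\sum_t n_t\,t\in KG$ with $n_t\in KN$ one computes $(v\otimes 1)\alpha=\sum_t (v\,n_t)\otimes t$. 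This vanishes for all $v$ exactly when each $n_t$ annihilates $V$, i.e.\ (by faithfulness of $V$) when $\alpha=0$; hence $V^G$ is \emph{already} a faithful $KG$-module. Notably this step uses neither condition on $\Delta$, so the real work is to find a faithful \emph{irreducible} quotient of $V^G$.

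The engine for producing such a quotient is primeness of $KG$. Because $\Delta(G)=1$, in particular $G$ has no nontrivial finite normal subgroup, so by the standard primeness criterion for group algebras (see \cite{Ps77}) $KG$ is prime: a product of finitely many nonzero two-sided ideals is nonzero. I would use this in the contrapositive form that if a finite product of primitive ideals is zero, then one of those ideals is zero.

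I would first handle the case $[G:N]<\infty$, which already exhibits the whole mechanism. Here $V^G=\bigoplus_{t\in T}V^{t}$ is a finite direct sum of the conjugate modules $V^{t}$, each a simple $KN$-module; thus $V^G$ has finite length over $KN$, and a fortiori finite length over $KG$. Fix a $KG$-composition series $0=M_0\subset M_1\subset\cdots\subset M_r=V^G$ with irreducible factors $U_j:=M_j/M_{j-1}$, and put $P_j:=\operatorname{ann}_{KG}(U_j)$. Then $M_r\,P_rP_{r-1}\cdots P_1\subseteq M_0=0$, so the two-sided ideal $P_r\cdots P_1$ annihilates the faithful module $V^G$ and is therefore zero. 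By primeness some $P_j=0$, i.e.\ $U_j$ is a faithful irreducible $KG$-module, and $KG$ is primitive.

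The remaining task, and the step I expect to be the main obstacle, is to remove the finiteness of $[G:N]$ using $\Delta(G/N)=G/N$. When $G/N$ is infinite, $V^G$ has infinite length over $KG$ and the composition-series argument collapses; worse, one cannot merely invoke Zorn's lemma to choose a maximal right ideal above $LKG$ with zero core, because the core of a union of an ascending chain of proper right ideals need not remain zero. The route I would take exploits the structure of FC-groups: $G/N$ is the directed union of its finitely generated subgroups, each of which is center-by-finite, so pulling back yields a directed family of intermediate groups $N\leq G_\lambda\leq G$ with $G_\lambda/N$ center-by-finite. Over the central abelian layer and the finite quotient one applies a suitable relative version of the finite-index argument above, and then assembles a faithful irreducible $KG$-module as a direct limit along this family. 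The delicate point is precisely the preservation of faithfulness (zero core) in the limit, and it is here that both hypotheses must be combined: $\Delta(G/N)=G/N$ guarantees that each element interacts with only finitely many conjugates at every stage, while $\Delta(G)=1$ ensures that no nonzero two-sided ideal survives in the core of the limiting right ideal. Making this core-control precise is the technical heart of the argument, and is the reason the result is attributed to Zalesskii \cite{Zal}.
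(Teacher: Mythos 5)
First, a framing remark: the paper does not prove this lemma at all --- it is quoted as \cite[Theorem~1]{Zal}, with \cite{Ps77} given as background reading --- so your attempt has to stand on its own as a reproof of Zalesskii's theorem, and as such it has a genuine gap. Your first two steps are correct: $V^G=V\otimes_{KN}KG\cong KG/LKG$ is faithful (the coset computation is right), and when $[G:N]<\infty$ the module $V^G$ has finite length over $KN$, hence over $KG$, so the composition-series argument combined with primeness of $KG$ (which does follow from $\Delta(G)=1$ via Connell's theorem, as you say) produces a faithful irreducible factor. But the finite-index case is not the content of the lemma --- there the hypothesis $\Delta(G/N)=G/N$ is automatic --- and for infinite $G/N$ you offer only a plan: a directed family $N\le G_\lambda\le G$ with $G_\lambda/N$ center-by-finite and a direct limit, while conceding yourself that preserving zero core in the limit is ``the technical heart'' and leaving it unproved. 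An admitted missing step at exactly the point where both $\Delta$-hypotheses must interact is a gap, not a proof.

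Moreover, your diagnosis of the obstacle is misplaced, which is why the limit plan would likely not converge: no control of cores along chains of right ideals is needed. Take any maximal right ideal $\rho$ of $KG$ containing $LKG$ (which is proper, since $LKG\cap KN=L$ by freeness of $KG$ over $KN$). Then $\rho\cap KN$ is a proper right ideal of $KN$ containing the maximal $L$, so $\rho\cap KN=L$; consequently, if $I$ denotes the core of $\rho$, then $I\cap KN$ is a $G$-invariant two-sided ideal of $KN$ contained in $L$, hence contained in the core of $L$, which is zero. So $I\cap KN=0$ automatically, for free. What the proof actually requires --- and what your sketch never supplies --- is the intersection lemma that under $\Delta(G)=1$ and $\Delta(G/N)=G/N$ every \emph{nonzero} two-sided ideal of $KG$ meets $KN$ nontrivially; this is the crux of Zalesskii's argument and is proved by Passman-style $\Delta$-methods: pick $0\ne\alpha\in I$ supported on a minimal number of $N$-cosets, use that each coset has finitely many conjugates (since $G/N$ is FC) to conjugate and subtract so as to shrink the support, and use $\Delta(G)=1$ to exclude the degenerate terminal case. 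Granting that lemma, $I=0$ and $KG$ is primitive in one stroke, with no length hypotheses and no direct limits. Identifying and proving this intersection lemma is the missing idea in your attempt.
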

\vspace{-0.25in}

\begin{proof}[Proof of Theorem~\ref{HNN_TH}]
We begin by proving (\ref{thmfouroneone}). By Theorem~\ref{MR_TH},
it suffices to show that $G^{*}$ satisfies $(\ast)$
and has a free subgroup
whose cardinality is the same as that of $G^{*}$.
Replacing $\varphi$ with $\varphi^{-1}$ if necessary,
we assume that there exists $g\in G$ with 
$g^{-1}Ag\cap A=1$.

We shall first show that $G^{*}$ satisfies $(\ast)$.
Let $M$ be a set of finitely many non-trivial elements in $G$.
For $u\in M$, let
\begin{equation}\label{HNN1}
u=u_{0}t^{l(u1)}\cdots t^{l(un_u)}u_{n}
\end{equation}
be the normal form of $u$.
Choose $q\in\Z$ so that 
$q>\sum_{j=1}^{n_u}|l(uj)|$ for any $u\in M$,
and put $x_i=t^{-q_i}gth^{-1}t^{q_i}$, for $i\in[3]$,
where $h\in G\setminus (A\cup B)$ and $q_i=q+i$;
we will show that the $M^{x_i}=\{x_i^{-1}ux_i\ |\ u\in M\}$
are mutually reduced, implying that $G^{*}$ satisfies $(\ast)$.
It suffices to show that,
for each $v_1,\ldots, v_k\in\cup_{i=1}^{3}M^{x_i}$
with $\{v_j, v_{j+1}\}\not\subseteq M^{x_i}$,
there are $l_1,\ldots l_n\in\Zh$ and $g_0,\ldots. g_n\in G$
so that $w=g_0t^{l_1}\cdots t^{l_n}g_n$ is reduced and 
\begin{equation}\label{HNN2}
t^{-q_i}ht^{-1}wth^{-1}t^{q_j}
\end{equation}
is a reduced form of $v_1\cdots v_k$,
where $v_1\in M^{x_i}$ and $v_k\in M^{x_j}$.
Assume first that $k=1$,
and then $v_1=x_i^{-1}ux_i$.
Let $u_{0}t^{l_1}\cdots t^{l_n}u_{n}\ne 1$ 
be the normal form of $u$. Then,
\begin{equation}\label{HNN3}
v_1=t^{-q_i}ht^{-1}g^{-1}t^{q_i}ut^{-q_i}gth^{-1}t^{q_i},
\mbox{ where } u=u_{0}t^{l_1}
\cdots t^{l_n}u_{n}.
\end{equation}
If either $u_0\not\in B$ or $l_1>0$,
then the expression of $v_1$ in (\ref{HNN3})
is a reduced form.
We may assume therefore that $u_0\in B$ and $l_1\leq 0$.
Since $tu_0t^{-1}=\varphi^{-1}(u_0)$,
we have
\begin{equation}\label{HNN4}
t^{q_i}ut^{-q_i}=\left\{
\begin{array}{llll}
t^{q_i-1}\varphi^{-1}(u_0)t^{-q_i+1} & \mbox{if } l_1=0\
 (\mbox{i.e., } n=0),\\
t^{q_i-1}\varphi^{-1}(u_0)u_1t^{l_2}
\cdots t^{l_n}u_{n}t^{-q_i} & \mbox{if } l_1=-1\\
t^{q_i-1}\varphi^{-1}(u_0)t^{l_1+1}
\cdots t^{l_n}u_{n}t^{-q_i} & \mbox{if } l_1<-1.\\
\end{array}\right.
\end{equation}
If $l_1=0$ and $\varphi^{-1}(u_0)\not\in B$,
then $t^{q_i-1}\varphi^{-1}(u_0)t^{-q_i+1}$
is a reduced form of $t^{q_i}ut^{-q_i}$, because $q_i>1$.
Similarly, if either $l_1=-1$ and $\varphi^{-1}(u_0)u_1\not\in B$
or $l_1<-1$ and $\varphi^{-1}(u_0)\not\in B$,
then the expressions in (\ref{HNN4})
are respectively reduced.
Substituting these for $t^{q_i}ut^{-q_i}$ in (\ref{HNN3}),
$v_1$ has a reduced form as in (\ref{HNN2}) for each case.
We may assume therefore that
$$\left\{\begin{array}{lll}
\varphi^{-1}(u_0)\in B & \mbox{if } l_1=0 \mbox{ or } l_1<-1,\\
\varphi^{-1}(u_0)u_1\in B & \mbox{if } l_1=-1.\\
\end{array}\right.$$
Note that if $l_1=0$ then $u_0\ne 1$, 
and also that if $l_1=-1$ then $\varphi^{-1}(u_0)u_1\ne 1$
because $1\ne u_1\not\in A$ and $\varphi^{-1}(u_0)\in A$.
Since $q_i>\sum_{j=1}^{n}|l_j|+i$,
we can proceed with this procedure for (\ref{HNN4}) under necessary assumption
until we get
$$t^{q_i}ut^{-q_i}=\left\{
\begin{array}{llll}
\varphi^{-q_i}(u_0) & \mbox{if } l_1=0\
 (\mbox{i.e., } n=0),\\
\varphi^{-q_i-(l_1+\cdots+l_n)}(a_nu_n)
 & \mbox{if } l_i<0\ (i\in[n]),\\
\end{array}\right.
$$
where $a_1=\varphi^{l_1}(u_0)$ and $a_{i+1}=\varphi^{l_{i+1}}(a_iu_i)$
for $i\in[n-1]$.
Since both $\varphi^{-q_i}(u_0)$ 
and $\varphi^{-q_i-l}(a_nu_n)$ are non-trivial and in $A$,
where $l=l_1+\cdots+l_n$,
we see that $g^{-1}\varphi^{-q_i}(u_0)g\not\in A$
and $g^{-1}\varphi^{-q_i-l}(a_nu_n)\not\in A$.
This implies that
$$
v_1=\left\{
\begin{array}{llll}
t^{-q_i}ht^{-1}g^{-1}\varphi^{-q_i}(u_0)gth^{-1}t^{q_i}
& \mbox{if }  n=0,\\
t^{-q_i}ht^{-1}g^{-1}\varphi^{-q_i-l}(a_nu_n)t^lgth^{-1}t^{q_i}
& \mbox{if }  n>0\\
\end{array}\right.
$$
are respectively reduced forms of $v_1$.

Now, let $t^{-q_i}ht^{-1}wth^{-1}t^{q_j}$
and $t^{-q_r}ht^{-1}
w^{\prime}th^{-1}t^{q_s}$
are reduced forms of $v$ and $v'$ as in (\ref{HNN2}), respectively.
If $j\ne r$ then $vv'$ also has a reduced form as in (\ref{HNN2}),
and so it can be easily seen by
induction on $k$ that for each $v_1,\ldots, v_k\in\cup_{i=1}^{3}M^{x_i}$
with $\{v_j, v_{j+1}\}\not\subseteq M^{x_i}$,
$v_1\cdots v_k$ has a reduced form as in (\ref{HNN2}). Thus, $G^{*}$ satisfies $(\ast)$.

It remains to prove that $G^{*}$ has a free subgroup
whose cardinality is the same as that of $G^{*}$.
We may asume that $|G^{*}|>\aleph_0$.
Recall that $g\in G\setminus A$ with $g^{-1}Ag\cap A=1$.
If $|A|=|G^{*}|$
then we set
\begin{equation}\label{HNN5}
\begin{array}{ll}
&M_1=\{x_a^{\pm 1},\ x_a^{-1}x_{a'}\ |\ a, a'\in A\setminus\{1\},\ a\ne a'\}\\
\mbox{and}
&M_2=\{y_a^{\pm 1},\ y_a^{-1}y_{a'}\ |\ a, a'\in A\setminus\{1\},\ a\ne a'\},\\
\mbox{where}
&x_a=t^{-1}agt \mbox{ and } y_a=t^{-2}agt^2.\\
\end{array}
\end{equation}
Since $ag\not\in A$ and $g^{-1}a^{-1}a'g\not\in A$,
we have that $x_a\ne x_{a'}$ and $y_a\ne y_{a'}$ for $a\ne a'$.
In paticular, $|M_1|=|M_2|=|G^{*}|$.
Moreover, it is obvious that $M_1$ and $M_2$ are mutually reduced,
and so $Z=\{x_ay_a^{-1},\ |\ a\in  A\setminus\{1\}\}$
generates the free subgroup whose cardinality is the same as that of $G^{*}$
by Remark \ref{FSG}.

Next suppose that $|A|<|G^{*}|$.
Let $S$ be the set consisting
of representatives of a right coset of $A$ in $G$.
We have then that $|S|=|G^{*}|$.
In (\ref{HNN5}),
replacing $A$ with $S$,
$x_a=t^{-1}agt$ with $x_a=t^{-1}at$
and $y_a=t^{-2}agt^2$ with $y_a=t^{-2}at^2$,
since for $a, a'\in S$ with $a\ne a'$,
both $a$ and $a^{-1}a'$ are not in $A$,
we repeat the same argument as in the above,
and get the desired result.

We now prove (\ref{thmfouronetwo}). Let $G_i=t^{i}Gt^{-i}$ and $G_{\infty}=\cup_{i=0}^{\infty}G_i$.
We can easily see that $G_{\infty}$ is a normal subgroup of $G^{*}$,
and also that $G^{*}$ is isomorphic to the cyclic extension of $G_{\infty}$.
In particular, $\Delta(G^{*}/G_{\infty})=G^{*}/G_{\infty}$.
If $M$ is a set of finitely many non-trivial elements in $G_{\infty}$,
then $M\subseteq G_i$ for some $i\in \Nat$.
Since $G_i$ is isomorphic to $G$,
$G_i$ satisfies $(\ast)$.
It follows from Theorem~\ref{MR_TH} that
$KG_{\infty}$ is primitive for any field $K$.
By Lemma~\ref{PBR}, it remains to prove that
$\Delta(G^{*})=1$.

Suppose, to the contrary, that $\Delta(G^{*})\ne 1$.
Let $g$ be in $\Delta(G^{*})$ with $g\ne 1$.
Since $[G^{*}:C_{G^{*}}(g)]<\infty$,
we have $[G:C_{G}(g)]<\infty$.
On the other hand, as we saw at the end of the proof of Theorem~\ref{MR_TH},
$\Delta(G)=1$, which implies $g\not\in G$.
By the normal form theorem,
there exist $n, l\geq 0$ and $f\in G$
such that $g=t^{n}ft^{-l}$,
where $f\not\in B(=\varphi(G))$ if neither $n=0$ nor $l=0$.
Replacing $g$ with $g^{-1}$ if necessary,
we may assume that $n\geq l\geq 0$, and then $f\not\in B$ unless $l=0$.
Since $[G^{*}:C_{G^{*}}(g)]<\infty$,
there exists $m\geq 1$ such that $t^{m}gt^{-m}=g$,
and so $t^{m+n}ft^{-l-m}=t^nft^{-l}$,
which implies $f=\varphi^m(f)\in B$.
Hence we get $l=0$; $g=t^nf$, where $n>0$ and $f\in B$.
Let $h\in G\setminus B$.
Again by $[G^{*}:C_{G^{*}}(g)]<\infty$,
there exists $m\geq 1$ such that $(th)^{m}g(th)^{-m}=g$.
Since $ht^n=t^n\varphi^n(h)$ and $t^{-1}h^{-1}=\varphi(h^{-1})t^{-1}$,
we have that
$$\begin{array}{lll}
(th)^{m}g(th)^{-m}
&=(th)^{m}t^nf(th)^{-m}\\
&=(th)^{m-1}t^{n+1}\varphi^n(h)fh^{-1}\varphi(h^{-1})t^{-2}(th)^{-m+2}\\
&\vdots\hskip2cm \vdots\\
&=t^{m+n}\varphi^{n+m-1}(h)\cdots
\varphi^n(h)fh^{-1}\varphi(h^{-1})\cdots\varphi^{m-1}(h^{-1})t^{-m}\\
&=t^nf,\\
\end{array}
$$
which implies that
$$h^{-1}=f^{-1}\varphi^n(h^{-1})\cdots\varphi^{n+m-1}(h^{-1})
\varphi^m(f)\varphi^{m-1}(h)\cdots\varphi(h).$$
Since $f\in B$,
we get $h^{-1}\in B$, a contradiction.
\end{proof}

For the remainder of this section,
let $A*_{H}B$ be the free product of $A$ and $B$ with $H$ amalgamated,
and suppose that $A\ne H\ne B$.
For $x\in A*_{H}B$ with $x\not\in H$
and for $u_i\in (A\cup B)\setminus H$ $(i\in[n])$,
$x=u_1\cdots u_n$ is a normal form for $x$
provided $u_i$ and $u_{i+1}$ are not both in $A$ or not both in $B$.
Although a normal form $x=u_1\cdots u_n$ is not unique,
the length $n$ of $x$ is well defined
and it is denoted here by $l(x)$.
If $x\in H$, we define $l(x)=0$.
For $x, V_1, \ldots, V_m\in A*_{H}B$,
we write $x\equiv V_1\cdots V_m$ and say that 
the product $V_1\cdots V_m$ is a reduced form if
$x=V_1\cdots V_m$ and $l(x)=l(V_1)+\cdots+l(V_m)$. We consider the following condition on $A*_{H}B$:
\vskip3pt

\begin{tabularx}{12cm}{lX}
$(\dagger)$&
$B\ne H$
and there exist elements $a$ and $a_{*}$ in $A\setminus H$
such that $aa_*\ne 1$ and $a^{-1}Ha\cap H=1$.\\
\end{tabularx}
\vskip3pt

\noindent
It is clear that either $aa_*\not\in H$
or $a_{*}a\not\in H$
provided $a$ and $a_{*}$ are elements as described in $(\dagger)$.
We shall prove the following theorem
which generalizes \cite[Theorem 3.1]{Bal}:

\begin{theorem}\label{PLAFP_TH}
Let $R$ be a domain and $G$ a non-trivial group
which has a free subgroup
whose cardinality is the same as that of $G$.
Suppose that for each $n\in\N$ and $f_1,\ldots,f_n\in G$, there exists a subgroup $N$
containing $f_1,\ldots,f_n$,
such that $N$ is isomorphic to $A*_{H}B$ which satisfies
$(\dagger)$. Then the group ring $RG$ is primitive
provided $|R|\leq |G|$.
In particular,
$KG$ is primitive for any field $K$.
\end{theorem}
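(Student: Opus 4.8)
The plan is to deduce Theorem~\ref{PLAFP_TH} from Theorem~\ref{MR_TH} by verifying that $G$ satisfies $(\ast)$; the free-subgroup hypothesis needed by Theorem~\ref{MR_TH} is essentially already in hand. Indeed, a nontrivial free group is infinite, so $G$ is infinite. If $|G|>\aleph_0$, then the given free subgroup of cardinality $|G|$ has uncountable rank and is in particular non-abelian, so Theorem~\ref{MR_TH} applies verbatim; if $|G|=\aleph_0$, the remark at the end of the proof of Theorem~\ref{MR_TH} shows that $(\ast)$ alone already furnishes the required non-abelian free subgroup. Thus everything reduces to proving $(\ast)$. Since $(\ast)$ only ever concerns a finite set $M$ of non-identity elements at a time, I would invoke the local hypothesis to choose a subgroup $N\supseteq M$ with $N\cong A*_{H}B$ satisfying $(\dagger)$; as a product of elements of $N$ is trivial in $G$ if and only if it is trivial in $N$, it suffices to produce distinct $x_1,x_2,x_3\in N$ for which $M^{x_1},M^{x_2},M^{x_3}$ are mutually reduced inside $N$.

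The construction of the conjugators would follow the template already used in this paper for locally free groups (Section~\ref{PAFP}) and for HNN extensions (the proof of Theorem~\ref{HNN_TH}(\ref{thmfouroneone})), transported to the amalgamated setting. Fix $b\in B\setminus H$ (available because $B\neq H$), and let $p$ bound the lengths $l(h)$ of all $h\in\widetilde{M}$. Using the elements $a,a_*$ of $(\dagger)$ together with $b$, I would form a cyclically reduced element $c$ of length two (for instance $c=ab$), whose powers $c^{q}$ genuinely grow in length, and set each $x_i$ to be a long word built from $c^{q_i}$ framed by fixed buffer syllables, with $q_i=q+i$ and $q$ chosen larger than $2p$. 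Taking $q$ large guarantees that conjugation by $x_i$ cannot be absorbed by any $h\in\widetilde{M}$: each $x_i^{-1}hx_i$ then has a reduced form whose prefix and suffix are prescribed long $i$-dependent words. Here the condition $a^{-1}Ha\cap H=1$ plays exactly the role that $g^{-1}Ag\cap A=1$ plays in the HNN argument, namely it forces a nontrivial syllable of $H$ conjugated by $a$ into $A\setminus H$, so that even the length-zero elements $h\in H$ produce genuinely reduced, hence nontrivial, conjugates rather than collapsing.

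With these reduced forms in hand, I would establish mutual reducedness by the same junction-and-induction argument as in the HNN case. Given a product $(x_{j_1}^{-1}h_1x_{j_1})\cdots(x_{j_m}^{-1}h_mx_{j_m})=1$ with each $h_t\in\widetilde{M}$, I may assume, aiming for the contrapositive, that $x_{j_t}\neq x_{j_{t+1}}$ for every $t$. At each junction the middle factor $x_{j_t}x_{j_{t+1}}^{-1}$ reduces to $c^{\,j_t-j_{t+1}}$, which is nontrivial precisely because $j_t\neq j_{t+1}$, and the buffer syllables prevent this surviving power of $c$ from cancelling into the neighbouring cores. An induction on $m$ then shows that the whole product has a reduced form of positive length, contradicting its triviality. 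This yields mutual reducedness of $M^{x_1},M^{x_2},M^{x_3}$, hence $(\ast)$, and Theorem~\ref{MR_TH} finishes the proof, including the ``in particular'' clause concerning $KG$.

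I expect the main obstacle to be precisely the normal-form bookkeeping compressed into the second paragraph: pinning down the reduced form of each $x_i^{-1}hx_i$ and controlling exactly how many syllables cancel at the junctions, across the cases $h\in H$, $h\in(A\cup B)\setminus H$, and $l(h)\geq 2$, with particular care for syllables lying in $H$. This is the amalgamated-product analogue of the lengthy computation carried out in the proof of Theorem~\ref{HNN_TH}(\ref{thmfouroneone}), and it is where the hypotheses $aa_*\neq 1$ and $a^{-1}Ha\cap H=1$ of $(\dagger)$ must be deployed carefully to rule out unexpected cancellation at the interfaces between cores and buffers.
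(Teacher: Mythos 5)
Your proposal follows essentially the same route as the paper: reduce to verifying $(\ast)$ inside a single amalgam $A*_{H}B$ satisfying $(\dagger)$, take conjugators of the form (long power of an alternating $A$--$B$ word)$\cdot$(buffer)$\cdot$(long power) with exponents exceeding the maximal length of the elements of $M$, and establish mutual reducedness by normal-form analysis plus induction on the number of factors --- the paper's choice $x_i=(b^{-1}a)^{\omega_i}a_{*}b^{-1}a_{*}^{-1}(b^{-1}a)^{\omega_i}$ is exactly such a word. The cancellation bookkeeping you defer is carried out in the paper as Lemma~\ref{PAFP_L1}, and your reading of the roles of the hypotheses matches the paper's: $a^{-1}Ha\cap H=1$ handles $h\in H$, while $aa_{*}\ne 1$ (giving $aa_{*}\notin H$ or $a_{*}a\notin H$) prevents collapse at the buffer--core interface.
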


If $A\ne H\ne B$,
then $A*_{H}B$ always has a countable free subgroup.
Hence, in Theorem~\ref{PLAFP_TH},
the assumption on existence of a free subgroup 
is needed only if $|G|>\aleph_0$. By  Theorem \ref{MR_TH}, to prove Theorem~\ref{PLAFP_TH}, 
it suffices to show that
$G$ satisfies $(\ast)$.
Since, for each $n\in\N$ and 
$f_1,\ldots,f_n\in G$, there is a subgroup $N=A*_{H}B$
containing $f_1,\ldots,f_n$ such that
$N$ satisfies $(\dagger)$ by assumption, we need only show that
if $A*_{H}B$ satisfies
$(\dagger)$,
then $A*_{H}B$ satisfies $(\ast)$.
In fact, if $b\in B\setminus H$ and $a, a_{*}\in A$
which satisfy the conditions $aa_{*}\ne 1$ and $a^{-1}Ha\cap H=1$,
then for $i\in[3]$,
\begin{eqnarray}
x_i =& (b^{-1}a)^{\omega_i}a_{*}b^{-1}a_{*}^{-1}(b^{-1}a)^{\omega_i}
&\mbox{ if } \ aa_{*}\not\in H \label{xi1}\\
x_i =& (b^{-1}a^{-1})^{\omega_i}a_{*}^{-1}b^{-1}a_{*}(b^{-1}a^{-1})^{\omega_i}
&\mbox{ if } \ a_{*}a\not\in H \label{xi2}
\end{eqnarray}
are desired elements in $A*_{H}B$,
where $\omega_i=l+i$ and $l$ is the maximum number in the set
$\{ l(f_i)\ |\
1\leq i\leq n\}$.
That is, for $M=\{ f_1,\ldots,f_n\}$,
$M^{x_i}$ $(i=1,2,3)$ are mutually reduced.
We shall confirm this after preparing a lemma.

\begin{lemma}\label{PAFP_L1}
Let $G=A*_{H}B$. Suppose that $G$ satisfies $(\dagger)$,
and let $a$ be an element as in $(\dagger)$ above.
Let $1\ne f\in G$ with $l(f)=l$
and $W=(a^{-1}b)^mf(b^{-1}a)^m$,
where $m$ is a positive integer and $b\in B\setminus H$.

If $m>l+1$,
then a reduced form of $W$ has the form
\begin{equation}\label{redform}
W\equiv (a^{-1}b)V(b^{-1}a) 
\mbox{ for some non-empty word }
V,
\end{equation}
otherwise $W=(b^{-1}a)^{\pm k}$
for some $k>0$.
\end{lemma}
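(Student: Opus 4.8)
The plan is to analyze the word $W=(a^{-1}b)^m f (b^{-1}a)^m$ by tracking how the normal form of $f$ interacts with the surrounding powers of $a^{-1}b$ and $b^{-1}a$, using the condition $(\dagger)$, in particular the key fact that $a^{-1}Ha\cap H=1$. First I would write a normal form $f=u_1\cdots u_l$ with $l(f)=l$ and $u_i\in(A\cup B)\setminus H$, alternating between $A$ and $B$. The central question is whether cancellation or amalgamation occurs at each of the $2m$ junctions where a factor $a$ or $b$ meets the ends of $f$; I would argue that because $a\in A\setminus H$ and $b\in B\setminus H$, the only way to shorten the product $(a^{-1}b)(a^{-1}b)\cdots$ is through collapse at the interface with $f$, and that collapse can ``eat'' at most one letter of $f$ per syllable of the attached power.

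The key steps, in order, are: (1) establish that in a product $x_1\cdots x_k$ with each $x_i\in(A\cup B)\setminus H$, length fails to be additive exactly when two consecutive factors lie in the same factor ($A$ or $B$) and their product lands in $H$ or shortens, so I track only the two ``seams'' at each end of $f$; (2) show by induction on the number of attached syllables that feeding in successive copies of $b^{-1}a$ on the right and $a^{-1}b$ on the left reduces the length of the intermediate word by at most a bounded amount per step, controlled by $l$; (3) conclude that once $m>l+1$, enough syllables survive that the outermost $a^{-1}b$ on the left and $b^{-1}a$ on the right cannot cancel, yielding the reduced form $W\equiv(a^{-1}b)V(b^{-1}a)$ with $V$ non-empty; and (4) in the remaining case $m\le l+1$, treat the possibility of total collapse, where $f$ is itself essentially a power of $b^{-1}a$ (up to $H$-coset adjustments), so that $W$ reduces to $(b^{-1}a)^{\pm k}$.

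The technical engine is the interplay of $a^{-1}Ha\cap H=1$ with the amalgamation. I expect to use this as follows: whenever a putative collapse would force an element of $H$ to be conjugated by $a$ back into $H$, the trivial-intersection condition forbids it unless the element is $1$, which pins down exactly when and how syllables can merge. I would carry the bookkeeping by defining, after peeling off $j$ layers from each side, an intermediate word $W_j$ and showing $l(W_{j+1})\ge l(W_j)-c$ for a small explicit constant, so that after $m$ layers the surviving length is at least $2m-2l$ or similar, which is positive and leaves the boundary syllables intact precisely when $m>l+1$.

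The main obstacle will be controlling the case analysis at the seams cleanly, especially distinguishing genuine cancellation (a letter of $f$ being absorbed) from mere amalgamation (an $H$-element sliding across a syllable boundary and being absorbed into the coset representative), since both can change the visible length without the argument's intent. Making the condition $a^{-1}Ha\cap H=1$ do its job requires carefully identifying the one configuration where an $A$-syllable from the power meets an $H$-element coming out of $f$; I expect that to be where the trivial-intersection hypothesis is indispensable, and I would isolate it as the heart of the induction. The borderline count $m>l+1$ versus $m\le l+1$ is exactly the threshold at which the two surviving boundary syllables are guaranteed, so getting the constant right in the length estimate is where the proof must be most careful.
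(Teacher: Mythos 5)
There is a genuine gap, and it originates in how you have parsed the statement. The ``otherwise'' in the lemma is not a case split on $m\le l+1$ versus $m>l+1$: the paper's proof works entirely under the hypothesis $m>l+1$ and establishes a \emph{dichotomy of outcomes} there (either $W\equiv(a^{-1}b)V(b^{-1}a)$ with $V$ non-empty, or $W=(b^{-1}a)^{\pm k}$), and this is exactly how the lemma is later invoked in the proof of Theorem~\ref{PLAFP_TH}. Your step (3) --- that $m>l+1$ alone forces the sandwiched form --- is false. Take $f=a^{-1}b$, a normal form of type $AB$ with $l=2$; then for every $m$,
\[
W=(a^{-1}b)^m(a^{-1}b)(b^{-1}a)^m=(a^{-1}b)^{m+1-m}=a^{-1}b=(b^{-1}a)^{-1},
\]
which has length $2$ and is not of the form $(a^{-1}b)V(b^{-1}a)$ with $V$ non-empty. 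This also kills your proposed length estimate: once the cancellation has consumed all of $f$, the surviving left and right blocks annihilate \emph{each other}, so no lower bound of the shape $l(W)\ge 2m-2l>0$ can hold. Symmetrically, your step (4) addresses a case that is not part of the lemma, and the assertion you would be proving there (that $m\le l+1$ implies $W=(b^{-1}a)^{\pm k}$) is also false --- for $f$ of type $AA$ the word $W$ is already reduced for every $m\ge1$, whatever the length of $f$.

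The repair is to run the induction so that the degenerate outcome is carried along as a legitimate alternative. The paper splits on the type of $f$ ($AA$, $AB$, $BA$, $BB$, according to whether the first and last syllables of a normal form lie in $A\setminus H$ or $B\setminus H$). Type $AA$ is immediate. For $l=0$ the hypothesis $a^{-1}Ha\cap H=1$ is the engine, exactly as you anticipated: $bfb^{-1}$ either already lies in $B\setminus H$ or is a non-trivial element of $H$, in which case $a^{-1}(bfb^{-1})a\in A\setminus H$ and the word stabilizes after one layer. For type $AB$ one peels one syllable off each outer power and, in the bad case, two letters off $f$ (not one, as in your accounting), reducing to $W'=(a^{-1}b)^{m-1}f'(b^{-1}a)^{m-1}$ with $l(f')=l-2$; the branch in which $f$ is entirely consumed is precisely what produces $W=(b^{-1}a)^{\pm k}$. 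Types $BB$ and $BA$ follow by symmetry and by passing to $W^{-1}$. Your identification of where $a^{-1}Ha\cap H=1$ enters is correct, but without the exceptional alternative the induction you describe cannot close.
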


\begin{proof}
Consider $f\in G\sm\{1\}$ with $l(f)=l$.
If a normal form for $f$ 
begins with an element in $A\setminus H$
and ends with an element in $B\setminus H$,
then we say that $f$ is of type $AB$.
Similarly, we define the types $BA$, $AA$ and $BB$.
If $l>0$ then $f$ is of type one of the above four types. Let $W=(a^{-1}b)^mf(b^{-1}a)^m$ with $m>l+1$.
If $f$ is of type $AA$, 
then it is trivial that a reduced form of $W$
has the form (\ref{redform}).
We may therefore assume that
$f$ is not of type $AA$.

We first suppose that
$l=0$; thus $f\in H$.
Clearly,
if $b^{\prime}=bfb^{-1}\not\in H$,
then $W\equiv (a^{-1}b)^{m-1}a^{-1}b^{\prime}a(b^{-1}a)^{m-1}$
is a normal form for $W$ and
is of the form (\ref{redform}).
If $b^{\prime}\in H$,
then $b^{\prime}\ne 1$ and thus $a^{-1}b^{\prime}a\in A\setminus H$
because of $(\dagger)$.
Since $m>1$,
we have that $W\equiv (a^{-1}b)^{m-1}a^{\prime}(b^{-1}a)^{m-1}$
is of the form (\ref{redform}), 
where $a^{\prime}=a^{-1}b^{\prime}a$.

Next suppose that $l>0$ and
$f$ is of type $AB$.
In this case, $l\geq 2$.
Let $f=\alpha_1\beta_2\cdots\alpha_{l-1}\beta_{l}$
be a normal form for $f$,
where $\alpha_i\in A\setminus H$ and $\beta_i\in B\setminus H$.
If $\beta_l b^{-1}\in B\setminus H$,
then the assertion is trivial,
and so we may assume that 
$\beta_l b^{-1}\in H$ and also that 
$\alpha_{l-1}^{\prime}=\alpha_{l-1}\beta_l b^{-1}a\in H$.
If $l=2$ and $\alpha_{l-1}^{\prime}=1$,
then $W=(a^{-1}b)^m(b^{-1}a)^{m-1}$,
and hence $W=(a^{-1}b)$.
If $l=2$ and $\alpha_{l-1}^{\prime}\ne 1$,
then $W=(a^{-1}b)^{m}\alpha_{l-1}^{\prime}(b^{-1}a)^{m-1}$.
Since $l(\alpha_{l-1}^{\prime})=0$ and $m-1>2$,
$W'=(a^{-1}b)^{m-1}\alpha_{l-1}^{\prime}(b^{-1}a)^{m-1}$
is of the form (\ref{redform}) and so is $W=a^{-1}bW^{\prime}$.
In the case of $l>2$,
we set $\beta_{l-2}^{\prime}=\beta_{l-2}\alpha_{l-1}^{\prime}$,
$f'= \alpha_1\beta_2\cdots \alpha_{l-3}\beta_{l-2}^{\prime}$,
and $W^{\prime}=(a^{-1}b)^{m-1}f'(b^{-1}a)^{m-1}$.
Since $\beta_{l-2}^{\prime}\in B\setminus H$,
by easy induction on $l$,
we see that the assertion holds for $W^{\prime}$
and so is for $W=a^{-1}bW^{\prime}$.

Similarly, we can prove that
a reduced form of $W$ has the form (\ref{redform}) provided
that $f$ is of type $BB$.
Moreover,
if $f$ is of type $BA$,
then $f^{-1}$ is of type $AB$.
Therefore,
replacing $W$ by $W^{-1}$,
it follows that the assertion holds
when $f$ is of type $BA$.
\end{proof}

\begin{proof}[Proof of Theorem~\ref{PLAFP_TH}]
For some $n\in\N$, say $M=\{f_1,\ldots, f_n\}\sub G$.
By assumption,
there exists a subgroup $N$ with $M\subseteq N$
and $N\simeq A*_{H}B$ which satisfies $(\dagger)$.
As mentioned at the beginning of this section,
it suffices to show that
$M^{x_i}$, over $i\in[3]$, 
are mutually reduced,
where $x_i$ are as in (\ref{xi1}) or (\ref{xi2}).
Replacing $a$ and $a_{*}$ in (\ref{xi1})
by $a^{-1}$ and $a_{*}^{-1}$ respectively,
we can get the case of (\ref{xi2}),
and so we need only show the case (\ref{xi1});
to that end, let $x_i=(b^{-1}a)^{\omega_i}a_{*}b^{-1}a_{*}^{-1}(b^{-1}a)^{\omega_i}$
and assume that $aa_{*}\not\in H$.

Let 
$g_{ip}=x_i^{-1}f_px_i$, for $p\in[n]$, be the elements in $M^{x_i}$.
Since $\omega_i=l+i$ for $i\in[3]$
and $l$ is the maximum number in the set
$\{ l(f_i)\ |\ i\in[n]\}$,
by Lemma \ref{PAFP_L1},
for each $i\in[3]$ and each $p\in[n]$,
a reduced form of
$W_{ip}=(a^{-1}b)^{\omega_i}f_p(b^{-1}a)^{\omega_i}$
has the form 
either 
$W_{ip}\equiv (b^{-1}a)^{\pm k}$ for some $k>0$
or $W_{ip}\equiv(a^{-1}b)V_{ip}(b^{-1}a)$ 
for some non-empty word $V_{ip}$.
In either case,
since $aa_{*}\in A\setminus H$,
a normal form of
$a_{*}^{-1}W_{ip}a_{*}$ is of type $AA$.
We have then that
\begin{equation}\label{gip}
g_{ip}\equiv X_{i}^{-1}A_{ip}X_{i},
\end{equation}
where $X_{i}=b^{-1}a_{*}^{-1}(b^{-1}a)^{\omega_i}$
and $A_{ip}=a_{*}^{-1}W_{ip}a_{*}$.
If $i\ne j$, say $i>j$,
then a normal form of $X_iX_j^{-1}$ is
$b^{-1}a_{*}^{-1}(b^{-1}a)^{\omega_i-\omega_j-1}b^{-1}a'b$
which is of type $BB$,
where $a'=aa^{*}$.
Therefore we have
\begin{equation}\label{gipgjp}
g_{ip}g_{jq}\equiv
X_{i}^{-1}A_{ip}B_{ij}A_{jq}X_{j},
\end{equation}
where $B_{ij}=b^{-1}a_{*}^{-1}(b^{-1}a)^{\omega_i-\omega_j-1}b^{-1}a'b$.

Now,
let $g=g_1\cdots g_k$ be the product of any finite number of
elements $g_i$'s in $\bigcup_{j=1}^{3}M^{x_j}$.
Since a reduced form of $g_i$ has the form (\ref{gip}), 
if both of $g_i$ and $g_{i+1}$ are not in the same $M^{x_j}$ for any $i$,
then by noting that a reduced form of $g_{i}g_{i+1}$
has the form (\ref{gipgjp}),
it can be easily seen by induction on $k$
that $g\equiv X_1^{-1}UX_k$ holds
for some non-empty word $U$ in $G$.
Hence, in particular, $g\ne 1$.
We have thus shown that
$M^{x_i}$'s
are mutually reduced.
\end{proof}

The next corollary improves
\cite[Theorem 3.1 and Remark 3.3]{Bal}.

\begin{corollary}\label{PAFP_C1}
Let $R$ be a domain and $G=A*_{H}B$.
If $G$ satisfies $(\dagger)$ and $|R|\leq |G|$, then the group ring $RG$ is primitive.
In particular,
$KG$ is primitive for any field $K$.
\end{corollary}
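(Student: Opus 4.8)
The plan is to derive Corollary~\ref{PAFP_C1} as the special case $G = A *_H B$ of Theorem~\ref{PLAFP_TH}. The key observation is that Theorem~\ref{PLAFP_TH} requires, for each finite tuple $f_1,\ldots,f_n \in G$, the existence of a subgroup $N$ containing these elements with $N \simeq A' *_{H'} B'$ satisfying $(\dagger)$. When $G$ itself equals $A *_H B$ and already satisfies $(\dagger)$, I would simply take $N = G$ for every finite tuple; this is the trivial choice that makes the hypothesis of Theorem~\ref{PLAFP_TH} hold automatically. The only remaining gap to fill is the free subgroup requirement in Theorem~\ref{PLAFP_TH}, and this is handled by the remark immediately preceding Theorem~\ref{PLAFP_TH}: since $A \ne H \ne B$ (which is subsumed by $(\dagger)$, as $(\dagger)$ demands $B \ne H$ and the existence of $a \in A \setminus H$ forces $A \ne H$), the amalgamated free product $A *_H B$ always contains a countable free subgroup.

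The one subtlety to address is the cardinality matching in the free-subgroup hypothesis of Theorem~\ref{PLAFP_TH}, which asks for a free subgroup whose cardinality equals $|G|$. First I would split into cases by the size of $G$. If $|G| = \aleph_0$, the countable free subgroup furnished by the remark already has the required cardinality. If $|G| > \aleph_0$, I would instead invoke the concluding observation from the proof of Theorem~\ref{MR_TH}: for a countably infinite group the free-subgroup assumption is automatic, but more to the point, the mechanism behind Remark~\ref{FSG} can be scaled up. Concretely, since $G = A *_H B$ satisfies $(\dagger)$, the same construction of mutually reduced sets $M^{x_i}$ used in the proof of Theorem~\ref{PLAFP_TH} (applied to a set of representatives of cosets rather than a single finite tuple) produces free generators $z_i = x_i y_i^{-1}$ indexed by a set of full cardinality $|G|$, exactly as in the treatment of the case $|A| < |G^*|$ in the proof of Theorem~\ref{HNN_TH}\eqref{thmfouroneone}. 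This yields a free subgroup of cardinality $|G|$.

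Having verified both hypotheses of Theorem~\ref{PLAFP_TH}, namely that every finite subset lies in a subgroup isomorphic to an amalgamated free product satisfying $(\dagger)$ (here $N = G$) and that $G$ contains a free subgroup of cardinality $|G|$, the conclusion $RG$ is primitive for any domain $R$ with $|R| \leq |G|$ follows directly. The final sentence, that $KG$ is primitive for any field $K$, is then obtained exactly as in Theorem~\ref{MR_TH}: a prime field $K'$ always satisfies $|K'| \leq |G|$, so $K'G$ is primitive, and an application of Lemma~\ref{Ps} together with the fact $\Delta(G) = 1$ (which holds because $G = A *_H B$ with $A \ne H \ne B$ has trivial FC-center) upgrades this to arbitrary field extensions $K$ of $K'$.

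I expect essentially no obstacle, as this corollary is genuinely a direct specialization. The only point requiring a sentence of care is the high-cardinality free-subgroup matching; once one notes that the $(\dagger)$-construction of mutually reduced families already manufactures free generators indexed by arbitrarily large sets of coset representatives, the matching is immediate and the corollary reduces cleanly to Theorem~\ref{PLAFP_TH}.
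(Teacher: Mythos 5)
Your reduction is the right one and matches the paper's: taking $N=G$ for every finite tuple makes the first hypothesis of Theorem~\ref{PLAFP_TH} automatic, so the corollary comes down to exhibiting a free subgroup of $G$ whose cardinality equals $|G|$. That single step is, in fact, the entire content of the paper's proof of the corollary (the ``in particular'' clause needs no separate treatment of $\Delta(G)$ or Lemma~\ref{Ps}, since it is already part of the conclusion of Theorem~\ref{PLAFP_TH}).

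However, the way you discharge that step when $|G|>\aleph_0$ does not work as stated. The ``$(\dagger)$-construction'' you invoke --- the elements $x_1,x_2,x_3$ of (\ref{xi1})/(\ref{xi2}) --- produces only \emph{three} conjugators for a given \emph{finite} set $M$, because the exponent $\omega_i=l+i$ depends on the maximum length $l$ of the elements of $M$; it does not manufacture mutually reduced families indexed by a set of cardinality $|G|$. Likewise, the coset-representative device from the case $|A|<|G^{*}|$ in Theorem~\ref{HNN_TH}(1) leans on conjugation by powers of the stable letter $t$, which has no counterpart in $A*_H B$. What the paper actually does is note that if $|G|>\aleph_0$ then at least one of $A\setminus H$, $B\setminus H$, $H$ has cardinality $|G|$, and in each case it writes down an explicit family of bounded-length words --- $a_ib(ab)^2a_ib$ for distinct $a_i\in A\setminus H$, or $(ab_i)^3$ for distinct $b_i\in B\setminus H$, or, via Remark~\ref{FSG}, $z_i=x_iy_i^{-1}$ with $x_i=a^{-1}h_ia$ and $y_i=b^{-1}a^{-1}h_iab$ for distinct $h_i\in H$, exploiting the element $a$ with $a^{-1}Ha\cap H=1$ --- and verifies that these freely generate a subgroup of cardinality $|G|$. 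So the skeleton of your argument is correct and identical to the paper's, but the one nontrivial step requires this explicit three-case construction rather than an appeal to the $(\ast)$-verification machinery.
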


\begin{proof}
We need only to show that $G$ has a free subgroup
whose cardinality is the same as that of $G$.
Let $I$ be a set with $|I|=|G|$,
and let $a\in A\setminus H$ such that $a^{-1}Ha\cap H=1$
and $b\in B\setminus H$.
If $|A\setminus H|=|G|$ (resp. $|B\setminus H|=|G|$),
then for each $i\in I$,
there exists $a_i\in A\setminus H$ 
(resp. $b_i\in B\setminus H$)
such that $a_i\ne a_j$ 
(resp. $b_i\ne b_j$) for $i\ne j$.
We have then that
the subgroup of $G$ generated by $a_ib(ab)^2a_ib$ 
(resp. $(ab_i)^3$), over $i\in I$,
is freely generated by them.
On the other hand,
if $|H|=|G|$,
then for each $i\in I$, there is $h_i\in H$ 
with $h_i\ne h_j$ for $i\ne j$.
Let $M_1:=\{\ x_i^{\pm 1}, x_i^{-1}x_j|\ i,j\in I, i\ne j\}$
and $M_2:=\{\ y_i^{\pm 1}, y_i^{-1}y_j|\ i,j\in I, i\ne j\}$
where $x_i:=a^{-1}h_ia$ and $y_i:=b^{-1}a^{-1}h_iab$.
Since, clearly, $M_1$ and $M_2$ are mutually reduced,
it follows from Remark \ref{FSG} that the subgroup of $G$ generated by $z_i=x_iy_i^{-1}$, over $i\in I$,
is freely generated by them.
\end{proof}

We call the free product $A*B$
of two nonidentity groups $A$ and $B$
a strict free product if $A*B\not\simeq\Zh_2*\Zh_2$.
We call $G$ a locally strict free product
if, for each $m\in\N$ and $g_1,\ldots, g_m\in G$,
there exists a subgroup $H$ of $G$ 
which is isomorphic to a strict free product
such that $\{g_1,\ldots, g_m\}\subseteq H$.
Clearly,
if $A*B$ is a strict free product,
then it satisfies $(\dagger)$,
and therefore, 
the following corollary,
which generalizes the result of \cite{For},
follows directly from Theorem \ref{PLAFP_TH}:

\begin{corollary}\label{LFP}
Let $R$ be a domain and $G$ a locally strict free product.
Suppose that $G$ has a free subgroup
whose cardinality is the same as that of $G$.
If $|R|\leq |G|$, then the group ring $RG$ is primitive. In particular,
$KG$ is primitive for any field $K$.
\end{corollary}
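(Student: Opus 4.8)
The plan is to derive Corollary~\ref{LFP} as a direct consequence of Theorem~\ref{PLAFP_TH}. The hypotheses of Theorem~\ref{PLAFP_TH} require: (i) $R$ is a domain with $|R|\leq|G|$; (ii) $G$ has a free subgroup whose cardinality equals that of $G$; and (iii) for each $n\in\N$ and $f_1,\ldots,f_n\in G$, there is a subgroup $N$ containing $f_1,\ldots,f_n$ with $N\iso A*_HB$ satisfying $(\dagger)$. Conditions (i) and (ii) are assumed outright in the statement of the corollary, so the only work is to verify (iii). Since $G$ is a locally strict free product, for any finite subset $\{g_1,\ldots,g_n\}\sub G$ there is a subgroup $H\leq G$ isomorphic to a strict free product $A*B$ (i.e.\ $A*B\not\iso\Zh_2*\Zh_2$) with $\{g_1,\ldots,g_n\}\sub H$. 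So the remaining step is to show that every strict free product, viewed as an amalgamated free product $A*_{\{1\}}B$ over the trivial subgroup $H=\{1\}$, satisfies $(\dagger)$.

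First I would record the reduction: take $N=H$ as furnished by the local strictness hypothesis, and regard $N=A*B=A*_{\{1\}}B$, so that the amalgamated subgroup is trivial. Then $H=\{1\}$, and I must produce $a,a_*\in A\sm\{1\}$ with $aa_*\neq1$ and $a^{-1}\{1\}a\cap\{1\}=\{1\}$; the latter condition is automatic since the amalgamated subgroup is trivial. Likewise, $B\neq H$ holds because $B\neq\{1\}$ in a free product of nontrivial groups. Thus the entire content of $(\dagger)$ reduces to finding $a,a_*\in A\sm\{1\}$ with $aa_*\neq1$, which is exactly what the excerpt already asserts in the sentence ``if $A*B$ is a strict free product, then it satisfies $(\dagger)$'' immediately preceding the corollary. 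I would invoke this observation directly.

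The main (and essentially only) obstacle is the elementary verification that a strict free product admits such elements $a,a_*$. If $|A|\geq3$, pick any $a\in A\sm\{1\}$ and choose $a_*\in A\sm\{1,a^{-1}\}$, which is possible as $|A\sm\{1\}|\geq2$; then $aa_*\neq1$. The delicate case is $|A|=|B|=2$, i.e.\ $A\iso B\iso\Zh_2$: here $A\sm\{1\}$ is a single involution $a$, forcing $a_*=a$ and hence $aa_*=1$ in $A$, so $(\dagger)$ fails through $A$. But one may symmetrically attempt to realize $(\dagger)$ using $B$ in the role of $A$; this also fails precisely when $B\iso\Zh_2$. Thus $(\dagger)$ fails for both factors exactly when $A*B\iso\Zh_2*\Zh_2$, which is excluded by the strictness hypothesis. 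Consequently, in every strict free product at least one factor has order $\geq3$, yielding the required $a,a_*$ and establishing $(\dagger)$.

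Having verified (iii), all hypotheses of Theorem~\ref{PLAFP_TH} are met, so $RG$ is primitive; taking $R=K$ a field (noting any field embeds appropriately and $|K|\leq|G|$ follows from $G$ being infinite as it contains a nonabelian free subgroup) gives primitivity of $KG$ for every field $K$. I expect no genuine difficulty beyond the small-cardinality case analysis above; the proof is a short citation of the preceding theorem together with the remark identifying strict free products as instances of $(\dagger)$.
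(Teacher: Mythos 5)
Your proposal is correct and takes essentially the same route as the paper, which likewise deduces the corollary directly from Theorem~\ref{PLAFP_TH} by observing that a strict free product, regarded as an amalgamation $A*_{H}B$ over $H=1$ (with the factors relabeled if necessary so that a factor of order at least $3$ plays the role of $A$), satisfies $(\dagger)$. One minor caveat: your parenthetical assertion that $|K|\leq|G|$ holds for every field $K$ is false in general, but it is also unnecessary, since the statement that $KG$ is primitive for every field $K$ is already part of the conclusion of Theorem~\ref{PLAFP_TH} (obtained there via Lemma~\ref{Ps} and the prime field).
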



\section{Primitivity of group rings of one relator groups with torsion}
\label{OneRGr}

One relator groups,
whose historical origins come from a study
of the fundamental group of a surface,
are perhaps one of the most interesting
and well-studied class of infinite groups.
In particular,
residual finiteness of one relator groups is one of the main topics 
in combinatorial group theory since the 1960s,
where a group is residually finite
provided 
each non-identity element of it can be mapped to a
non-identity element in some homomorphism onto a finite group.
Generally,
one relator groups need not to be residually finite
(see \cite{Bau62}, \cite{B-M-T07}, and \cite{Mesk}).
On the other hand,
it has been conjectured by Baumslag \cite{Bau67}
that every one relator group with torsion
is residually finite,
and it has been believed that the conjecture holds
(see \cite{Wise01} and \cite{Sap-Spa}).
More precisely,
Baumslag conjectured that
one relator groups with torsion
are virtually locally free by cyclic
(see \cite{B-M-T07} and \cite{B-T08}).
If $G$ is a virtually (non-abelian) locally free by cyclic group,
then $KG$ is primitive for any field $K$ by \cite{Ni11}.
We cannot completely settle this conjecture,
but, 
by making use of Theorem \ref{PLAFP_TH} (or Corollary \ref{LFP}),
we can prove that $KG$ is primitive when $G$ is 
a one relator group with torsion:

\begin{theorem}\label{ORG_TH}
If $G$ is a non-cyclic one relator group with torsion,
then $KG$ is primitive for any field $K$.
\end{theorem}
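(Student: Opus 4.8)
The plan is to apply Theorem~\ref{PLAFP_TH}. Write $G=\langle x_1,\ldots,x_k\mid w^{\,q}\rangle$ with $q\geq 2$, $w$ cyclically reduced and not a proper power; since $G$ is non-cyclic we may take $k\geq2$. Then $G$ is finitely generated and infinite, because its abelianization $\Z^{k}/\langle q\overline w\rangle$ has free rank $\geq k-1\geq1$; hence $|G|=\aleph_0$. Moreover $G$ is word-hyperbolic by B.~B.~Newman's spelling theorem and, being non-cyclic with torsion, is non-elementary, so it contains a non-abelian free subgroup and therefore a free subgroup of rank $\aleph_0=|G|$. This settles the free-subgroup hypothesis of Theorem~\ref{PLAFP_TH}, and the bound $|R|\leq|G|$ is the only remaining constraint on the domain $R$. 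It thus suffices to show that every finite subset $\{f_1,\ldots,f_n\}\subseteq G$ lies in a subgroup $N\iso A*_{H}B$ satisfying $(\dagger)$.

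I would produce such an $N$ in one of two regimes, depending on the subgroup $\langle f_1,\ldots,f_n\rangle$. When this subgroup is free-product-like---for example free of rank $\geq2$, or a free product $\Z_q*\Z$ obtained by adjoining one generic infinite-order element via a ping-pong argument in the non-elementary hyperbolic group $G$---I would take $N$ to be a strict free product (so $N\not\iso\Z_2*\Z_2$) and invoke Corollary~\ref{LFP}, strict free products satisfying $(\dagger)$ as recorded before that corollary. Otherwise I would use the Magnus--Karrass--Solitar and Moldavanskii structure theory of one relator groups with torsion to realize the given elements inside a subgroup $N$ carrying a nondegenerate amalgamated decomposition $N\iso A*_{H}B$ whose edge group $H$ is a free Magnus subgroup.

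For such an amalgam it remains to verify $(\dagger)$. Nondegeneracy, $A\neq H\neq B$, is built into the splitting. The elements $a,a_{*}\in A\setminus H$ with $aa_{*}\neq1$ exist because $A$ properly contains $H$ and is infinite. The substantive requirement is the conjugate-intersection condition $a^{-1}Ha\cap H=1$ for a suitable $a\in A\setminus H$; this I would obtain from the intersection theory of Magnus subgroups (Collins, Bagherzadeh) or, equivalently in the hyperbolic picture, from the quasiconvexity and almost malnormality of the maximal edge subgroups. Granting this, $(\dagger)$ holds and Theorem~\ref{PLAFP_TH} yields the primitivity of $RG$ for every domain $R$ with $|R|\leq|G|$, and of $KG$ for every field $K$.

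The main obstacle is uniform: a non-cyclic one relator group with torsion need not itself split as an amalgamated free product---for instance the genus-one orbifold groups $\langle a,b\mid[a,b]^{\,q}\rangle$ have positive first Betti number and split globally only as HNN extensions, so one cannot simply take $N=G$. The crux is therefore the subgroup-theoretic step of capturing an arbitrary finite set of elements inside a subgroup that both admits a nondegenerate amalgam decomposition over a free edge group and has that edge group almost malnormal, so that $(\dagger)$ is available. Once this structural input is in hand, the remaining verifications---the cardinality count, the free subgroup, and the elementary parts of $(\dagger)$---are routine.
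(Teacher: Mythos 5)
Your proposal contains a genuine gap, and it is precisely at the point you yourself flag as ``the crux.'' Theorem~\ref{PLAFP_TH} requires that \emph{every} finite subset $\{f_1,\ldots,f_n\}$ of $G$ lie in a subgroup $N\iso A*_{H}B$ satisfying $(\dagger)$. If you take the finite subset to be a full generating set of $G$, then the only admissible $N$ is $G$ itself, so your strategy forces $G$ to split as such an amalgam --- and your own example $\langle a,b\mid [a,b]^{q}\rangle$ shows this can fail. So applying Theorem~\ref{PLAFP_TH} (or Corollary~\ref{LFP}) \emph{directly to $G$} cannot work in general, and the appeal to ``Magnus--Karrass--Solitar and Moldavanskii structure theory'' to ``realize the given elements inside a subgroup $N$ carrying a nondegenerate amalgamated decomposition'' is left entirely unexecuted; no mechanism is given for handling elements with nonzero image in the abelianization (e.g.\ the stable letter of the natural HNN splitting), which do not lie in any Magnus-hierarchy piece.

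The paper's resolution is a different reduction that your proposal is missing: it does not try to verify the local-amalgam condition for $G$, but instead produces a \emph{normal} subgroup $L\trianglelefteq G$ with $G/L$ cyclic (essentially the kernel of an exponent-sum map, presented via Reidemeister--Schreier as $G_\infty=\langle X_\infty\mid W_i^n,\ i\in\Zh\rangle$), proves $\Delta(G)=1$ (Lemma~\ref{L_S}), and then transfers primitivity from $L$ to $G$ by Zalesskii's theorem (Lemma~\ref{PBR}). The condition $(\dagger)$/$(\star)$ is verified only for finite subsets of $L$, where the Magnus hierarchy $G_\infty=\bigcup G_i$ with $G_{2i+1}=G_{2i}*_{P_{i+1}}Q_{i+1}$ applies: Lemmas~\ref{L_D} and~\ref{OG_LFP} show that any finite subset of $L$, together with a suitably chosen $W_t$, generates a genuine free product $\langle g_1,\ldots,g_m\rangle*\langle W_t\rangle$, which is what makes $L$ a locally strict free product. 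This normal-subgroup detour (plus the $\Delta(G)=1$ computation and the careful case analysis on exponent sums, including the $n=2$ subtlety where one must manufacture an element $\overline{v}$ of $L$ with $\overline{v}^2\neq 1$) is the substantive content of the proof, and it is absent from your argument. The peripheral verifications you do carry out (countability, existence of a free subgroup of rank $\aleph_0$) are correct but were never the issue.
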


In order to prove the above theorem,
we prepare necessary notation 
and group theoretic results on one relator groups with torsion. Throughout this section,
$F=\langle X\rangle$ denotes the free group
with a base $X$.
Let $G=\langle X\ | R\rangle$ denote the one relator group
with the set of generators $X$
with a relation $R$,
where $R$ is a cyclically reduced word in $F$.
For a word $W$ in $F$,
if $R=W^n$, $n>1$ and $W$ is not a proper power in $F$,
then $G$ is called a one relator group with torsion. 

Let $W$ be a word in $F$.
We denote the normal closure of $W$ in $F$ by $\Nc_{F}(W)$.
For a cyclically reduced word $W$,
$\Wsy_{F}(W)$ denotes the set
of all cyclically reduced conjugates of both $W$ and $W^{-1}$.
If $W_i,\ldots, W_t$ are reduced words in $F$
and $W=W_i\cdots W_t$ is also reduced,
that is, there is no cancellation in forming the product $W_i\cdots W_t$,
then we write $W\equiv W_i\cdots W_t$.
For $Y\subseteq X$,
$\langle Y\rangle_G$ is the subgroup of $G$
generated by the homomorphic image in $G$ of $Y$.

\begin{lemma}\label{L_ST}
Let $n\in\N$ and $G=\langle X\ | R\rangle$,
where $W$ is a cyclically reduced word in $F$
and $R=W^n$.
\begin{enumerate}[(1)]
\item 
\mbox{{\rm (\cite[Theorem]{Sh}, cf. \cite{HP})}}\
If $V\in \Nc_{F}(R)\sm\{1\}$,
then $V$ contains a subword $S^{n-1}S_0$,
where $S\equiv S_0S_1\in \Wsy_{F}(W)$ and every
generator appearing in $W$ appears in $S_0$.
\end{enumerate}
\begin{enumerate}[(2)]
\item 
\mbox{{\rm (\cite[Theorem]{New73})}}\
The centralizer of every non-trivial element in $G$ is cyclic. 
\end{enumerate}
\end{lemma}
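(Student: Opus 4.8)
The plan is to treat both parts via the Magnus--Moldavanskii hierarchy for one-relator groups, since (1) and (2) are exactly the two classical theorems of B.~B.~Newman (the Spelling Theorem and the cyclic-centralizer theorem) and the cleanest self-contained route to both is an induction along the same tower of HNN extensions. Recall the set-up: given $G=\langle X\mid W^n\rangle$ with $W$ cyclically reduced and $n>1$, either some generator occurs in $W$ with exponent sum $0$, or it does not. In the second case one applies Magnus's substitution trick (a Tietze change of generators) to embed $G$ into a one-relator group in which some generator does have exponent sum $0$, so it suffices to treat the first case. There, collecting the occurrences of the zero-sum generator $t$ exhibits $G$ as an HNN extension $G^{*}=\langle K,t\mid t^{-1}At=B\rangle$ whose base $K$ is again a one-relator group $\langle Y\mid W'^{\,n}\rangle$ with the same torsion exponent $n$ but a strictly shorter relator $W'$, and with $A,B$ free subgroups generated by subwords of $W'$. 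This furnishes the inductive scaffold for both parts; the base of the induction is $G=\langle x\mid x^{n}\rangle \ast F_0$, a free product of a finite cyclic group with a free group, where everything is transparent.

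For part (1) I would induct on $|W|$. Let $V\in\Nc_{F}(W^n)\setminus\{1\}$. Viewing $V$ as an element of $G^{*}$ it is trivial, so by Britton's Lemma its reduced expression in the HNN extension contains a pinch $t^{-\epsilon}ct^{\epsilon}$ with $c$ in $A$ or $B$; rewriting this pinch pushes a shorter relator-word into the base $K$, and the induction hypothesis applied in $K$ produces a subword of the required form $S'^{\,n-1}S_0'$ for $W'$. The essential work is then translating this subword upward through the Magnus rewriting: one must show that the occurrence of $S'^{\,n-1}S_0'$ in the rewritten word corresponds to an honest subword $S^{n-1}S_0$ of $V$ in the original alphabet, with $S\equiv S_0S_1\in\Wsy_{F}(W)$ a cyclic conjugate of $W^{\pm1}$ and with \emph{every} generator of $W$ appearing in the initial segment $S_0$. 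The last condition is recovered from the fact that the zero-sum generator $t$ and all the generators of $W$ reappear as the rewriting is inverted, together with the defining relation $S\in\Wsy_{F}(W)$.

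For part (2) I would again use the hierarchy, showing by induction that centralizers of nontrivial elements are cyclic. In the base case $\langle x\mid x^{n}\rangle\ast F_0$ the Kurosh subgroup theorem gives that centralizers are cyclic (finite cyclic for the torsion conjugates, infinite cyclic otherwise). For the HNN step I would invoke the description of centralizers in HNN extensions due to Karrass--Solitar: any $1\neq g\in G^{*}$ is either conjugate into the base $K$, in which case $C_{G^{*}}(g)$ is controlled by a centralizer in $K$ and the induction hypothesis applies, or it is a ``$t$-element'' whose centralizer is contained in a subgroup $\langle w\rangle$ for a single $w$, hence cyclic. The torsion case is handled by the hierarchy as well, since every element of finite order in a one-relator group with torsion is conjugate into the cyclic factor of the base, where its centralizer is finite cyclic; one then checks via Britton's Lemma that the HNN step does not enlarge this centralizer beyond a cyclic group. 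Alternatively, (2) can be deduced directly from (1): if $g$ and $h$ commute one analyses the freely reduced form of $[g,h]\in\Nc_{F}(W^n)$ with the Spelling Theorem to force $g$ and $h$ to possess a common root, whence they lie in a common cyclic subgroup.

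The main obstacle is the bookkeeping in part (1). Two things must be tracked with care through the Magnus reduction: first, that the descended relator $W'$ is \emph{again a genuine $n$-th power} of a word that is not a proper power, so that the ``with torsion'' hypothesis and the exponent $n$ persist down the entire hierarchy (this is what keeps the conclusion $S^{n-1}S_0$, with the same $n$, available at every level); and second, the faithful translation of the subword $S'^{\,n-1}S_0'$ back into the original free group, since both the exponent-sum-zero reduction and the Magnus substitution introduce replacements that can split or relabel subwords. Guaranteeing that, after undoing these substitutions, one still sees a \emph{contiguous} block $S^{n-1}S_0$ with all generators of $W$ present in $S_0$ is the delicate combinatorial heart of the argument; part (2), whether proved via the hierarchy or reduced to (1), is comparatively routine once (1) and the structure of centralizers in HNN extensions are in hand.
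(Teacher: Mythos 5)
First, a point of calibration: the paper does not prove this lemma at all. Part (1) is quoted as Schupp's strengthened Freiheitssatz (\cite[Theorem]{Sh}, cf.\ \cite{HP}) and part (2) as Newman's theorem (\cite[Theorem]{New73}); the authors explicitly include proofs ``for completeness'' only for the \emph{next} lemma (Lemma~\ref{L_S}). So your attempt is measured against the cited literature rather than against an argument in the text. To your credit, your outline correctly identifies the route those sources actually take: the Magnus--Moldavanskii induction (exponent-sum-zero reduction, Magnus substitution, HNN decomposition over Magnus subgroups, Britton's lemma to locate a pinch) is precisely the scaffolding of Newman's and Schupp's proofs of (1), with Howie--Pride later giving an alternative via staggered $2$-complexes.

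As a proof, however, your proposal has a genuine gap, and it is exactly the one you flag yourself: all the content of statement (1) is concentrated in the steps you defer. Concretely: when Britton's lemma produces a pinch $t^{-\epsilon}Ut^{\epsilon}$ with $U$ representing an element of the Magnus subgroup $A$ of the base $K$, the induction hypothesis is applied not to $U$ but to $U\cdot V_A^{-1}$ for some word $V_A$ in the generators of $A$, and one must then argue that the spelling subword $S'^{\,n-1}S_0'$ lands inside $U$ (hence inside $V$) rather than inside $V_A^{-1}$ or straddling the junction. This is where the clause ``every generator of $W'$ appears in $S_0'$'' is load-bearing: $A$ omits a generator occurring in $W'$, so $S_0'$ cannot sit in $V_A^{-1}$. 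In other words the strengthening is not a bonus conclusion but the hypothesis that makes the induction close, and your sketch never sets this mechanism up. Likewise, your assertion that a subword found after the Magnus substitution ``corresponds to an honest subword of $V$ in the original alphabet'' is false for arbitrary subword occurrences --- the substitution embeds groups but does not biject subwords --- and repairing this is the bulk of Schupp's paper. Finally, part (2) is not ``comparatively routine'': the Karrass--Solitar description controls centralizers of base-conjugate elements only relative to the associated subgroups, and in your alternative route, extracting a common root of commuting $g,h$ from the spelling theorem applied to $[g,h]$ is a substantial argument (essentially Newman's 1973 paper), not a corollary. In short: the road map is right, but the combinatorial heart is missing --- which is presumably why the paper cites these results instead of proving them.
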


The next two results in the lemma below are probably well-known to experts,
but we include their proofs for completeness.

\begin{lemma}\label{L_S}
For $n>1$,
let $G=\langle X \ |\ R \rangle$ with $|X|>1$,
where $R=W^n$ 
and $W$ is a cyclically reduced word in $F$.
\begin{enumerate}[(1)]
\item 
If $S, T\subseteq X$,
then $\langle S\rangle_{G}\cap \langle T\rangle_{G}
=\langle S\cap T\rangle_{G}$.
\end{enumerate}
\begin{enumerate}[(2)]
\item 
$\Delta(G)=1$.
\end{enumerate}
\end{lemma}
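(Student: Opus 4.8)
The plan is to deduce both assertions from Newman's spelling theorem, Lemma~\ref{L_ST}(1). Write $F=\langle X\rangle$, let $F(Y)$ denote the free subgroup of $F$ on a subset $Y\subseteq X$, and identify $\langle Y\rangle_G$ with the image of $F(Y)$ under $F\to G=F/\Nc_F(R)$; let $X_W\subseteq X$ be the set of generators occurring in $W$. The one fact I extract from Lemma~\ref{L_ST}(1) is that the reduced form of every nontrivial element of $\Nc_F(R)$ contains a subword $S^{n-1}S_0$, so \emph{every} generator of $X_W$ occurs in it; consequently $F(Y)\cap\Nc_F(R)=1$ whenever $X_W\not\subseteq Y$, that is, such Magnus subgroups $\langle Y\rangle_G$ are free on $Y$.

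For the first assertion the inclusion $\langle S\cap T\rangle_G\subseteq\langle S\rangle_G\cap\langle T\rangle_G$ is clear, so I fix $g\in\langle S\rangle_G\cap\langle T\rangle_G$ and split into two cases. If $X_W\subseteq S$ (the case $X_W\subseteq T$ being symmetric), then $R\in F(S)$, so $x\mapsto x$ for $x\in S$ and $x\mapsto 1$ for $x\in X\setminus S$ defines a retraction $\pi:G\to G$ with image $\langle S\rangle_G$ and $\pi|_{\langle S\rangle_G}=\mathrm{id}$. Then $\pi(g)=g$, while $g\in\langle T\rangle_G$ forces $\pi(g)\in\langle\{\pi(t):t\in T\}\rangle_G=\langle S\cap T\rangle_G$, whence $g\in\langle S\cap T\rangle_G$. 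If instead $X_W\not\subseteq S$ and $X_W\not\subseteq T$, I choose lifts $u\in F(S)$ and $v\in F(T)$ of $g$, so that $uv^{-1}\in\Nc_F(R)$; it then suffices to show $uv^{-1}=1$ in $F$, for then $u=v\in F(S)\cap F(T)=F(S\cap T)$ and $g\in\langle S\cap T\rangle_G$.

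Establishing this last claim is the technical heart. Suppose $uv^{-1}\neq1$ and write its reduced form as $pq^{-1}$ with $p\in F(S)$ and $q\in F(T)$. By Lemma~\ref{L_ST}(1) it contains a subword $P^{n-1}P_0$ with $P\equiv P_0P_1\in\Wsy_F(W)$, and since $n\geq2$ the block $P_0$ — which involves every generator of $X_W$ — occurs at least twice, at disjoint positions. Now $pq^{-1}$ has a single boundary between its $S$-part $p$ and its $T$-part $q^{-1}$, so at least one of these $P_0$-occurrences lies entirely within $p$ or entirely within $q^{-1}$; in the former all its letters lie in $S$, contradicting that it contains a generator of $X_W\setminus S$, and in the latter all its letters lie in $T$, contradicting that it contains a generator of $X_W\setminus T$. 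This forces $uv^{-1}=1$ and completes the first assertion. The essential use of torsion, $n\geq2$, is precisely what guarantees the repeated $P_0$-block driving this pigeonhole step.

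For the second assertion, suppose $\Delta(G)\neq1$ and take $1\neq g\in\Delta(G)$, so that $C_G(g)$ has finite index in $G$. Since $G$ is infinite (by the observation above some $\langle x\rangle_G$ is infinite cyclic) and $C_G(g)$ is cyclic by Lemma~\ref{L_ST}(2), $G$ is virtually infinite cyclic. Pick distinct $a,b\in X$, which exist as $|X|>1$. By the first assertion, $\langle a\rangle_G\cap\langle b\rangle_G=\langle\emptyset\rangle_G=1$. If both $\langle a\rangle_G$ and $\langle b\rangle_G$ are infinite this is impossible, since in a virtually cyclic group any two infinite subgroups meet the finite-index copy of $\Z$ in nontrivial subgroups and hence meet nontrivially. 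A subgroup $\langle x\rangle_G$ fails to be infinite only when $X_W=\{x\}$, and this holds for at most one generator, so such $a,b$ can be chosen unless $X=\{a,b\}$ with $W=a^k$; in that remaining case $G\cong\Z_{kn}*F(X\setminus\{a\})$ is a free product of two nontrivial groups different from $\Z_2*\Z_2$, hence contains a nonabelian free subgroup and is not virtually cyclic. Either way we reach a contradiction, so $\Delta(G)=1$.
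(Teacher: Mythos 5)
Your proof is correct, and it is worth recording how it relates to the paper's. For part (1) you and the paper use the same engine: apply the spelling theorem (Lemma~\ref{L_ST}(1)) to the reduced form $pq^{-1}$ of $uv^{-1}\in\Nc_{F}(R)$, note that $n\geq 2$ forces two disjoint occurrences of the block $S_0$ containing every generator of $W$, and pigeonhole one occurrence entirely into $p\in F(S)$ or into $q^{-1}\in F(T)$. The difference is that you first split off the case where the set $X_W$ of generators occurring in $W$ lies inside $S$ (or inside $T$) and dispose of it with the retraction of $G$ killing $X\setminus S$; the paper only excludes the trivial cases $S\subseteq T$ and $T\subseteq S$ and then runs the pigeonhole argument, whose contradiction is visibly forced only when neither $S$ nor $T$ contains all of $X_W$ --- so your extra case makes explicit (and repairs, in the generality in which the lemma is stated) a step the paper leaves implicit, while being vacuous when every generator occurs in $W$, since there $X_W\subseteq S$ already gives $T\subseteq S$. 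For part (2) both arguments begin identically --- Newman's centralizer theorem (Lemma~\ref{L_ST}(2)) together with $[G:C_G(g)]<\infty$ makes $G$ virtually infinite cyclic --- but then diverge: the paper invokes the structure theorem for virtually cyclic groups (a finite normal subgroup $N$ with $G/N\cong\Z$ or $\Z_2*\Z_2$) and contradicts the torsion-freeness of an infinite cyclic centralizer, whereas you use part (1) to produce two infinite subgroups $\langle a\rangle_G$ and $\langle b\rangle_G$ meeting trivially, which is impossible in a virtually cyclic group, with the residual case $W=a^k$ handled by recognizing $G$ as a nontrivial free product. Your route makes part (2) a genuine corollary of part (1) and avoids the classification of virtually cyclic groups, at the cost of a short case analysis over which subgroups $\langle x\rangle_G$ are infinite.
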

%

\begin{proof}
(1):\
If $S\subseteq T$ or $T\subseteq S$,
then the assertion is clear,
and so we may assume $S\not\subseteq T$ and $T\not\subseteq S$.
It is obvious that $\langle S\rangle_{G}\cap \langle T\rangle_{G}
\supseteq\langle S\cap T\rangle_{G}$.
Suppose, to the contrary, that $\langle S\rangle_{G}\cap \langle T\rangle_{G}
\supsetneq\langle S\cap T\rangle_{G}$.
Then there exist reduced words $u=u(s,a,\ldots,b)$
in $\langle S\rangle\setminus\langle S\cap T\rangle$
and $v=v(t,c,\ldots,d)$
in $\langle T\rangle\setminus\langle S\cap T\rangle$
such that $uv\in\Nc_{F}(R)$,
where $a,\ldots,b\in S$, $c,\ldots,d\in T$, $s\in S\setminus (S\cap T)$,
and $t\in T\setminus (S\cap T)$.
Let $w$ be the reduced word for $uv$,
say $w\equiv u_1v_1$, where $u\equiv u_1u_2$ and $v\equiv u_2^{-1}v_1$.
Then $w\equiv u_1v_1\in \Nc_{F}(R)$.
However, $u_1$ involves $s$ but not $t$,
and $v_1$ involves $t$ but not $s$,
which contradicts the assertion of Lemma \ref{L_ST} (1).

(2):\
Suppose , to the contrary, $\Delta(G)\ne 1$;
then, there exists $1\ne g\in G$ such that $[G:C_{G}(g)]<\infty$.
By Lemma \ref{L_ST} (2),
$C_{G}(g)$ is cyclic and in fact infinite cyclic
because $|G|$ is not finite.
Thus $G$ is virtually cyclic
and so, as is well-known,
there exists a normal subgroup $N$ of finite order
such that $G/N$ is isomorphic to either 
the infinite cyclic group $\Zh$
or the infinite dihedral group $\Zh_2*\Zh_2$ (See \cite[p137]{JL}).

Since a one relator group with torsion is not isomorphic to 
$\Zh$ or $\Zh_2*\Zh_2$,
we may assume $N\ne 1$.
In both cases of $G/N\simeq \Zh$ and $G/N\simeq\Zh_2*\Zh_2$,
there exists $x\in G\setminus N$
such that $\langle x\rangle_G$ is 
a infinite cyclic subgroup of $G$.
Since $|N|$ is finite,
then it is easily seen that
there exists $m>0$ such that $x^{-m}fx^{m}=f$ for all $f\in N$,
which implies $N\subseteq C_{G}(x^m)$; this is a contradiction, 
since a infinite cyclic group does not contain
non-trivial finite subgroups.
\end{proof}

Let $X=\{x_1,x_2,\ldots , x_m\}$ with $m>1$
and $F=\langle X\rangle$.
To avoid unnecessary subscripts, 
we denote generators, $x_1,x_2,\ldots, x_m$,
by $t, a,\ldots, b$.
We consider the one relator group $G=\langle X\ | R\rangle$,
where $R=W^n$, $n>1$ and $W=W(t,a,\ldots, b)$ is a cyclically reduced word
which is not a proper power.
We assume that all generators appear in $W$.
We shall see that 
there exists a normal subgroup $L$ of $G$ such that
$G/L$ is cyclic and $L$ satisfies 
the assumption in
Corollary \ref{LFP}. 
That is,
$G$ has the following type of subgroup $G_{\infty}$
and $L$ is a subgroup of it, which shall be shown in
Proof of Theorem \ref{ORG_TH} below:
\begin{equation}\label{D_-1}
G_{\infty}=\langle X_{\infty}\ |\ R_i,\ i\in \Zh\rangle
\mbox{ with } R_i=W_i^{n} (n>1),
\end{equation}
where $X_{\infty}=\{a_j,\ldots, b_j \ |\ j\in \Zh\}$
and for each $i\in \Zh$,
$W_i$ is a cyclically reduced word in the free group,
which is as follows:
$F_{\infty}=\langle X_{\infty}\rangle$.
Let $\alpha_{*}$,$\ldots$, $\beta_{*}$
be  respectively the minimum subscripts 
on $a$, $\ldots$, $b$ occurring in $W_0$,
and let $\alpha^{*}, \ldots, \beta^{*}$ be the maximum subscript 
on $a$,$\ldots$, $b$ occurring in $W_0$, respectively.
Then $W_i$ is a word expressed by
$$W_i=W_i(a_{\alpha_{*}+i},\ldots,a_{\alpha^{*}+i}, 
\ldots, b_{\beta_{*}+i},\ldots, b_{\beta^{*}+i}).$$

Let $\mu$ be the maximum number 
in $\{\alpha^{*}-\alpha_{*}, \ldots, \beta^{*}-\beta_{*}\}$.
For $t\in \Zh$, 
we set subgroups $Q_t$ and $P_t$ of $G_{\infty}$ as follows:
\begin{equation}\label{D0}
\left\{\begin{array}{llll}
\begin{array}{llll}
\t{If }\mu\neq 0,\,\hspace{0.1in}Q_t:=\langle a_{t+i},  \ldots, b_{t+j} 
\ |\ \alpha_{*}\leq i\leq\alpha^{*},\ \ldots,\ \beta_{*}\leq j\leq\beta^{*}
\rangle_{G_{\infty}}\t{ and}\\
\hspace{0.67in}\hspace{0.1in}P_t:=\langle a_{t+i},  \ldots, b_{t+j}
\ |\ \alpha_{*}\leq i\leq\alpha^{*}-1,\ \ldots,\ \beta_{*}\leq j\leq\beta^{*}-1
\rangle_{G_{\infty}}.\\
\end{array}\\
\begin{array}{llll}
\t{If }\mu= 0,\,\hspace{0.1in}Q_t:=\langle a_{t+\alpha_{*}},\ldots,\ b_{t+\beta_{*}} 
\rangle_{G_{\infty}}\t{ and}\\
\hspace{0.66in}\hspace{0.1in}P_t:=1.\\
\end{array}\\
\end{array}\right.
\end{equation}
Then $P_t$ is a subgroup of $Q_t$ 
and $Q_t$ has the following presentation:
\begin{equation}\label{Qt}
Q_t\simeq \langle a_{t+\alpha_{*}}, 
\ldots, a_{t+\alpha^{*}},\ \ldots,\
b_{t+\beta_{*}},
\ldots, b_{t+\beta^{*}}\ |\ R_t\rangle.
\end{equation}
In what follows, let $\nu:=\beta^{*}-\beta_{*}$; replacing the order of $a_{i},\ldots, b_{i}$ in $X_{\infty}$
if necessary, assume $\mu=\alpha^{*}-\alpha_{*}\geq \cdots
\geq \beta^{*}-\beta_{*}=\nu$.
With Magnus' method for Freiheitssatz,
we may identify $G_{\infty}$
as the union of the following chain
(see \cite{MKS} or \cite{Lyn}):
\begin{equation}\label{D1}
\begin{array}{llll}
G_{\infty}=\bigcup_{i=0}^{\infty}G_i, \mbox{ where}\\
G_0=Q_0, \quad G_{2i}=Q_{-i}*_{P_{-i+1}}G_{2i-1},
\mbox{ and }\ G_{2i+1}=G_{2i}*_{P_{i+1}}Q_{i+1}.\\
\end{array}
\end{equation}

\begin{lemma}\label{L_D}
If $H$ is a subgroup of $G_{\infty}$
generated by a finite subset $Y$ of $X_{\infty}$; 
namely $H=\langle Y \rangle_{G_{\infty}}$,
then there is a positive integer $t$
so that $H\subseteq G_{2(t-1)}$ and
$H\cap P_t=1$.
\end{lemma}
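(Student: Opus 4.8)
The plan is to prove the two assertions separately while arranging a single integer $t$ to serve both. The containment is the easy half: since $G_{\infty}=\bigcup_i G_i$ is an \emph{increasing} union (each $G_i$ is a vertex group in $G_{i+1}$ by (\ref{D1})) and $Y$ is finite, we have $Y\subseteq G_j$ for some $j$, whence $H=\langle Y\rangle_{G_{\infty}}\subseteq G_{2(t-1-\mu)}$ for all sufficiently large $t$, where $\mu$ is as in the setup. The real content is $H\cap P_t=1$, and here the plan is to prove the sharper, $H$-free statement
\[
P_t\cap G_{2(t-1-\mu)}=1 ,
\]
so that choosing $t$ with $H\subseteq G_{2(t-1-\mu)}$ simultaneously forces $H\subseteq G_{2(t-1)}$ and $H\cap P_t\subseteq G_{2(t-1-\mu)}\cap P_t=1$, settling both claims with one $t$. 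When $\mu=0$ this is vacuous, as $P_t=1$ by (\ref{D0}), so I would assume $\mu\ge 1$.

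The structural input I would isolate first is read directly off (\ref{D0}) and (\ref{Qt}): $P_t$ is precisely $Q_{t-1}$ with the bottom generator $a_{t-1+\alpha_*},\ldots$ of each type deleted, so $P_t\subseteq Q_{t-1}$. Setting $D_0:=P_t$ and $D_k:=D_{k-1}\cap P_{t-k}=\bigcap_{j=0}^{k}P_{t-j}$, a comparison of index windows shows that for each generator type with offset range $[\gamma_*,\gamma^*]$ the subgroup $D_k$ occupies exactly the subscript window $[t+\gamma_*,\,t+\gamma^*-1-k]$ (bottom fixed, top descending by one at each step). The same window comparison yields the containment $D_{k-1}\subseteq Q_{t-k}$, and the amalgam identity $G_{2(s-1)}\cap Q_s=P_s$ follows from $G_{2s-1}=G_{2(s-1)}*_{P_s}Q_s$ in (\ref{D1}).

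With these in hand, the heart of the argument is a telescoping induction establishing $P_t\cap G_{2(t-1-k)}=D_k$ for $0\le k\le\mu$. The base case $k=0$ is just $P_t\subseteq G_{2(t-1)}$. For the step, $G_{2(t-1-k)}$ is a vertex group of $G_{2(t-k)-1}=G_{2(t-1-k)}*_{P_{t-k}}Q_{t-k}$; using the hypothesis $P_t\cap G_{2(t-k)}=D_{k-1}$ together with $D_{k-1}\subseteq Q_{t-k}$ and $G_{2(t-1-k)}\cap Q_{t-k}=P_{t-k}$, I would obtain $P_t\cap G_{2(t-1-k)}=D_{k-1}\cap P_{t-k}=D_k$. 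The point that keeps the induction honest is that every $D_k$ is \emph{generated by generators}: as $Q_{t-k}$ is a one relator group with torsion by (\ref{Qt}), Lemma~\ref{L_S}(1) identifies $D_{k-1}\cap P_{t-k}$ with the subgroup generated by the generators common to both, namely those in the window intersection $[t+\gamma_*,\,t+\gamma^*-1-k]$ of each type. Since this window is empty once $k\ge\gamma^*-\gamma_*$, after $k=\mu=\max_{\gamma}(\gamma^*-\gamma_*)$ steps every type is exhausted, the common generating set is empty, and $D_{\mu}=1$; hence $P_t\cap G_{2(t-1-\mu)}=1$.

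I expect the main obstacle to be bookkeeping rather than any deep point: one must track the subscript windows of \emph{all} generator types $a,\ldots,b$ simultaneously through the telescoping and confirm they collapse after exactly $\mu$ steps, and at each stage one must verify that the two subgroups being intersected are generated by subsets of the generators of the \emph{same} one relator group $Q_{t-k}$ (in particular that the relevant windows lie inside the window of $Q_{t-k}$ and that $Q_{t-k}$ has more than one generator), so that Lemma~\ref{L_S}(1) genuinely applies. Granting this, the choice of $t$ with $H\subseteq G_{2(t-1-\mu)}$ delivers both $H\subseteq G_{2(t-1)}$ and $H\cap P_t\subseteq G_{2(t-1-\mu)}\cap P_t=1$, completing the proof.
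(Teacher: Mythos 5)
Your proof is correct and is essentially the paper's own argument in a cleaner, $H$-free packaging: your $D_k=\bigcap_{j=0}^{k}P_{t-j}$ is exactly the paper's descending chain $P_t^{(\mu-k)}$, and your induction step uses the same two ingredients (the amalgam identity $G_{2(s-1)}\cap Q_s=P_s$ and Lemma~\ref{L_S}(1) applied in the one relator group $Q_{t-k}$) that the paper uses to push a putative nontrivial $u\in H\cap P_t$ down the chain to a contradiction.
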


\begin{proof}
Since $G_{\infty}=\bigcup_{i=0}^{\infty}G_i$
and $G_0\subsetneq G_1\subsetneq\cdots\subsetneq G_{2i}\subsetneq G_{2i+1}\subsetneq\cdots,$
there exists some $s\geq 0$ such that $G_{2s}\supseteq H$.
Let $t$ be a positive integer satisfying
\begin{equation}\label{D4}
s+\alpha^{*}<t+\alpha_{*}\,,\ldots,\
s+\beta^{*}<t+\beta_{*}.
\end{equation}
Since $s\leq t-1$ and $H\subseteq G_{2(t-1)}$, to finish the proof,
it suffices to show that $H\cap P_t=1$.
If $\mu=0$ in (\ref{D0}),
then the assertion is trivial, so assume that $\mu\ne 0$.

Suppose, to the contrary,
there exists $t\in\N$ which
satisfies (\ref{D4}) and $H\cap P_t\ne 1$.
For brevity,
we write $\hat{a}_i$ and $\hat{b}_i$ 
instead of $a_{t+\alpha_{*}+i}$ and $b_{t+\beta_{*}+i}$, respectively;
namely,
$P_t=\langle \hat{a}_{0},\ldots, \hat{a}_{\mu-1},\ \ldots,\
\hat{b}_{0}, 
\ldots, \hat{b}_{\nu-1} \rangle_{G_{\infty}}$.
For $j\in\{0,1,\ld,\mu\}$,
define $P_{t}^{(j)}$ so that
$$P_{t}=P_{t}^{(\mu)}\supsetneq P_{t}^{(1)}\supsetneq \cdots \supsetneq P_{t}^{(0)}=1$$
as follows:
$$\begin{array}{rll}
P_{t}=&P_{t}^{(\mu)}
&=\langle \hat{a}_{0},\ldots, \hat{a}_{\mu-1},\ \ldots,\
\hat{b}_{0}, 
\ldots, \hat{b}_{\nu-1} \rangle_{G_{\infty}}\\
&P_{t}^{(\mu-1)}
&=\langle \hat{a}_{0},\ldots, \hat{a}_{\mu-2},\ \ldots,\
\hat{b}_{0}, 
\ldots, \hat{b}_{\nu-2} \rangle_{G_{\infty}},\\
&\vdots & \vdots\\
&P_{t}^{(\mu-\nu+1)}
&=\langle \hat{a}_{0},\ldots,a_{\mu-\nu},\
\ldots,\
\hat{b}_{0}\rangle_{G_{\infty}}, \\
&P_{t}^{(\mu-\nu)}
&=\langle \hat{a}_{0},\ldots,\hat{a}_{\mu-\nu-1},
\ldots \rangle_{G_{\infty}}, \\
&\vdots & \vdots\\
&P_{t}^{(1)}
&=\langle \hat{a}_{0}\rangle_{G_{\infty}},\\
&P_{t}^{(0)}
&=1.\\
\end{array}$$
That is,
generators in $\{\hat{a}_{0},\ldots, \hat{a}_{\mu-1}\},\ldots$ 
and in $\{\hat{b}_{0}, \ldots, \hat{b}_{\nu-1}\},$
are respectively decremented one by one
from $P_{t}^{(\mu)}$ to $P_{t}^{(0)}$.
By our assumption,
$H\cap P_t\ne 1$, i.e.,
there is $u\in H\cap P_t$ with $u\ne 1$.
Thus, there is $l\in\{0,1,\ldots,\mu-1\}$ with
$u\in P_{t}^{(\mu-l)}$ and $u\not\in P_{t}^{(\mu-l-1)}$.
We shall see that this is impossible.
In fact, we shall show that $u\in H\cap P_{t}^{(\mu-l)}$
implies $u\in P_{t}^{(\mu-l-1)}$,
which completes the proof of the lemma.

Let $u\in H\cap P_{t}^{(\mu-l)}$.
By (\ref{D4}),
$s\leq t-\mu-1\leq t-l-2$,
which implies
\begin{equation}\label{D5}
H\subseteq G_{2(t-l-2)}
\end{equation}
because $H\subseteq G_{2s}\subseteq G_{2(t-l-2)}$.
By the construction of $P_{t}^{(\mu-l)}$, 
the set $T$ of generators of $P_{t}^{(\mu-l)}$ is
$$T=\{\hat{a}_{0}, \ldots, \hat{a}_{\mu-l-1},\ 
\ldots,\
\hat{b}_{0}, \ldots, \hat{b}_{\nu-l-1}\},$$
where for instance, 
$\{\hat{b}_{0}, \ldots, \hat{b}_{\nu-l-1}\}$
is $\emptyset$ if $\nu-l-1<0$.
By (\ref{D0}), the generators of $Q_{t-l-1}$ are
$$\hat{a}_{-l-1}, \ldots, \hat{a}_{0}, \ldots, \hat{a}_{\mu-l-1},\
\ldots,\
\hat{b}_{-l-1}, \ldots, \hat{b}_{0},\ldots,
\hat{b}_{\nu-l-1},$$
and therefore we see that $P_{t}^{(\mu-l)}\subseteq Q_{t-l-1}$.
Combining this with (\ref{D5}), it follows that
$u\in G_{2(t-l-2)}\cap Q_{t-l-1}$. 
Since $G_{2(t-l-2)}\cap Q_{t-l-1}=P_{t-l-1}$,
we have $u\in P_{t-l-1}$,
and thus $u\in P_{t-l-1}\cap P_{t}^{(\mu-l)}$.

On the other hand, the set $S$ of generators of $P_{t-l-1}$ in $Q_{t-l-1}$
is 
$$\begin{array}{lll}
&S=\{\hat{a}_{-l-1}, \ldots, \hat{a}_{\mu-l-2},\
\ldots,\
\hat{b}_{-l-1}, \ldots, \hat{b}_{\nu-l-2}\}.\\
\end{array}$$
Thus it is easy to see that
$\langle S\cap T \rangle_{Q_{t-l-1}}=P_{t}^{(\mu-l-1)}$.
We may regard $Q_{t-l-1}$ as a one-relator group with torsion,
and therefore it follows form Lemma \ref{L_S} (1) that
$$u\in P_{t-l-1}\cap P_{t}^{(\mu-l)}
=\langle S\rangle_{Q_{t-l-1}}\cap \langle T\rangle_{Q_{t-l-1}}
=\langle S\cap T \rangle_{Q_{t-l-1}}=P_{t}^{(\mu-l-1)};$$
thus $u\in P_{t}^{(\mu-l-1)}$, as desired.
\end{proof}

\begin{lemma}\label{OG_LFP}
If $G_{\infty}$ and $W_i$ are as in (\ref{D_-1}),
then for each $m\in\N$ and $g_1,\ldots,g_m\in G_{\infty}$,
there is $t\in\N$ with
$\langle g_1,\ldots,g_m, W_t\rangle_{G_{\infty}}= \langle g_1,\ldots,g_m\rangle_{G_{\infty}}*\langle W_t\rangle_{G_{\infty}}$.
\end{lemma}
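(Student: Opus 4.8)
The plan is to locate, with the help of Lemma~\ref{L_D}, a single index $t$ that places all of $g_1,\ldots,g_m$ in one factor of an amalgamated free product decomposition of some $G_{2t-1}$, puts $W_t$ in the complementary factor, and makes each of these two subgroups meet the amalgamated subgroup trivially; the desired free product then follows from the normal form theorem. First I would reduce to a finitely generated subgroup. Each $g_p$ is a word in only finitely many generators of $X_{\infty}$, so there is a finite $Y\subseteq X_{\infty}$ with $\{g_1,\ldots,g_m\}\subseteq\langle Y\rangle_{G_{\infty}}$. Setting $H:=\langle g_1,\ldots,g_m\rangle_{G_{\infty}}\subseteq\langle Y\rangle_{G_{\infty}}$ and applying Lemma~\ref{L_D} to $\langle Y\rangle_{G_{\infty}}$, I obtain $t\in\N$ with $\langle Y\rangle_{G_{\infty}}\subseteq G_{2(t-1)}$ and $\langle Y\rangle_{G_{\infty}}\cap P_t=1$; consequently $H\subseteq G_{2(t-1)}$ and $H\cap P_t=1$.

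Next I would read off the relevant amalgam from the chain (\ref{D1}). Taking $i=t-1$ in $G_{2i+1}=G_{2i}*_{P_{i+1}}Q_{i+1}$ gives $G_{2t-1}=G_{2(t-1)}*_{P_t}Q_t$. Since the generators of $W_t$ are precisely the generators of $Q_t$, we have $W_t\in Q_t$, while $H$ lies in the other factor $G_{2(t-1)}$. Thus $H$ and $\langle W_t\rangle_{G_{\infty}}$ sit in opposite factors of an amalgamated free product whose amalgamated subgroup is $P_t$.

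The crux is to verify the two intersection conditions that turn the join of these subgroups into their free product. For $H$ it is already in hand: $H\cap P_t=1$. For $\langle W_t\rangle_{G_{\infty}}$ I would argue $\langle W_t\rangle_{G_{\infty}}\cap P_t=1$ as follows. On one side, $W_t^n=R_t=1$ in $Q_t$ by (\ref{D_-1}), so $\langle W_t\rangle_{G_{\infty}}$ is finite, hence a torsion group. On the other side, $P_t$ is generated by a set of generators of $Q_t$ that omits the top generators $a_{t+\alpha^{*}},\ldots,b_{t+\beta^{*}}$, every one of which occurs in $W_t$; by the Freiheitssatz (\cite{MKS}, \cite{Lyn}) the relator $W_t^{n}$ imposes no relation on the remaining generators, so $P_t$ is free and in particular torsion-free. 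A torsion subgroup meets a torsion-free subgroup trivially, whence $\langle W_t\rangle_{G_{\infty}}\cap P_t=1$. (When $\mu=0$ we have $P_t=1$ and the claim is immediate.)

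Finally I would conclude by a normal-form argument for amalgamated free products. Because $H\cap P_t=1$, every nontrivial element of $H$ lies in $G_{2(t-1)}\setminus P_t$, and because $\langle W_t\rangle_{G_{\infty}}\cap P_t=1$, every nontrivial power of $W_t$ lies in $Q_t\setminus P_t$. Hence any reduced word alternating between nontrivial elements of $H$ and of $\langle W_t\rangle_{G_{\infty}}$ is already in reduced form in $G_{2(t-1)}*_{P_t}Q_t$, and so is nontrivial by the normal form theorem. Therefore the canonical map $H*\langle W_t\rangle_{G_{\infty}}\to\langle H,W_t\rangle_{G_{\infty}}$ is injective, giving $\langle g_1,\ldots,g_m,W_t\rangle_{G_{\infty}}=\langle g_1,\ldots,g_m\rangle_{G_{\infty}}*\langle W_t\rangle_{G_{\infty}}$. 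I expect the main obstacle to be the step $\langle W_t\rangle_{G_{\infty}}\cap P_t=1$: one must correctly identify which generators $P_t$ omits, confirm they occur in $W_t$ so that the Freiheitssatz yields torsion-freeness of $P_t$, and combine this with the finiteness of $\langle W_t\rangle_{G_{\infty}}$ forced by $W_t^n=1$.
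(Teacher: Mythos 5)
Your proposal is correct and follows essentially the same route as the paper: apply Lemma~\ref{L_D} to the subgroup generated by the finitely many generators occurring in $g_1,\ldots,g_m$ to get $t$ with $H\subseteq G_{2(t-1)}$ and $H\cap P_t=1$, place $W_t$ in the complementary factor $Q_t$ of $G_{2t-1}=G_{2(t-1)}*_{P_t}Q_t$, and derive $\langle W_t\rangle\cap P_t=1$ by pitting the torsion of $W_t$ (order $n$ in $Q_t$) against the torsion-freeness of $P_t$ guaranteed by the Freiheitssatz, before concluding with the normal form theorem. This is exactly the paper's argument, including the key intersection step.
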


\begin{proof}
Let $Y$ be the subset of $X_{\infty}$
consisting of generators 
appearing in $g_i$ for $i\in [m]$.
By Lemma \ref{L_D}, if $H:=\langle Y\rangle_{G_{\infty}}$,
there is $t\in\N$ with $H\subseteq G_{2(t-1)}$ and $H\cap P_t=1$. By (\ref{D1}),
$G_{2t-1}=G_{2(t-1)}*_{P_t}Q_{t}$,
where 
$Q_t$ is as described in (\ref{Qt})
and $P_t$ is as described in (\ref{D0}).
Since $W_t^n=R_t$ is the relator of $Q_t$,
we have $\langle W_t\rangle_{G_{\infty}}\subseteq Q_t$.
As is well known,
$W_t^m\ne 1$ in $Q_t$ for $m\in[n-1]$.
Moreover, $P_{t}\cap\langle W_t\rangle_{Q_{t}}=1$.
In fact, if not, there would be $m\in\N_0$
with $W_t^m\in P_t$ in $Q_t$.
Since $P_t$ is a free subgroup of $Q_t$
by Freiheitssatz,
we have that $1\ne (W_t^m)^n=(W_t^n)^m$ in $Q_t$.
However,
this contradicts that $W_t^n$ is the relator of $Q_t$.
We have thus shown that
$P_{t}\cap\langle W_t\rangle_{Q_{t}}=1$.
Combining this with $H\cap P_t=1$,
we see that
$\langle Y, W_t\rangle_{G_{2t-1}}
=\langle Y\rangle_{G_{2t-1}}*\langle W_t\rangle_{G_{2t-1}}$
$=H*\langle W_t\rangle_{G_{\infty}}$.
Since $\langle g_1,\ldots,g_m\rangle_{G_{\infty}}\subseteq H$,
we have that 
$\langle g_1,\ldots,g_m, W_t\rangle_{G_{\infty}}
=\langle g_1,\ldots,g_m\rangle_{G_{\infty}}*\langle W_t\rangle_{G_{\infty}}$.
\end{proof}

We are now in a position to prove Theorem \ref{ORG_TH}.

\begin{proof}[Proof of Theorem~\ref{ORG_TH}]
Let $G=\langle X\ | R\rangle$ be the one relator group with torsion,
where $|X|>1$, $R=W^n$, $n>1$ and $W$ is a cyclically reduced word
which is not a proper power.
If there exists a generator $x\in X$ which does not appear in $W$,
then $G$ is isomorphic to the free product
$\langle x\rangle*\langle X\setminus \{x\}\ |\ R\rangle$,
and so $KG$ is primitive for any field $K$
by Corollary \ref{PAFP_C1} or by the result of
Formanek \cite{For}.
Hence we may assume that $X$ is a finite set
and all generators in $X$ appear in $W$.
Let $X=\{t,a,b,\ldots,c\}$
and $W=W(t,a,b,\ldots,c)$.

In this case,
the cardinality of $G$ is countable,
and it is well-known that $G$ has a non-cyclic free subgroup.
Moreover, by Lemma \ref{L_S} (2), we see that $\Delta(G)=1$,
and therefore, combining Corollary \ref{LFP} 
with
Lemma \ref{PBR},
it suffices to show that there exists a normal
subgroup $L$ of $G$ such that $G/L$ is cyclic and $L$ satisfies 
the following condition:
\vskip3pt

\begin{tabularx}{13cm}{lX}
$(\star)$&
For any
$g_1,\ldots,g_m\in L$, 
there exists a free product
$A*B$ in the set of subgroups of $L$ such that $B\ne 1, a^2\ne 1$
for some $a\in A$, and $g_1,\ldots,g_m\in A*B$.\\
\end{tabularx}
\vskip3pt

There are now two cases to consider:
whether or not the exponent sum $\sigma_{x}(W)$ of 
$W$ on some generator $x$ is zero. If the exponent sum $\sigma_{x}(W)$ of 
$W$ on some generator $x$ is zero, say $\sigma_{t}(W)=0$,
then we set $N=\Nc_{F}(R)$ and $M=\Nc_{F}(a,b,\ldots,c)$.
In this case,
$G\simeq F/N$, where $F=\langle X\rangle$.
Since $\sigma_{t}(W)=0$,
we have that $N\subsetneq M$.
By making use of
a Reidemeister-Schreier rewriting process,
we get a presentation of $G_{\infty}=M/N$ as follows:
$$
G_{\infty}=\langle X_{\infty}\ |\ R_i,\ i\in \Zh\rangle
\mbox{ with } R_i=W_i^{n} (n>1),
$$
where $X_{\infty}=\{a_i,b_i,\ldots, c_i \ |\ i\in \Zh\}$,
$a_i=t^iat^{-i},b_i=t^ibt^{-i},\ldots,c_i=t^ict^{-i}$,
and $W_i=t^iWt^{-i}$ $(i\in \Zh)$.
Let $L=G_{\infty}$.
Then
$L$ is a normal subgroup of $G$ and $G/L$ is cyclic.
We can see that $L$ satisfies
the $(\star)$.
In fact, in (\ref{D0}),
if $\mu=0$ then the subgroup $G_1$ of $L(=G_{\infty})$
is the free product $Q_0*Q_1$.
If $\mu\ne 0$ then $G_1=Q_0*_{P_1}Q_1$.
In the former case,
$a_{\alpha_{*}}=a_{\alpha^{*}}$,
in the latter case,
$a_{\alpha_{*}}, a_{\alpha^{*}+1}\not\in P_1$,
and in either case,
$a_{\alpha_{*}}\in Q_0$ and $a_{\alpha^{*}+1}\in Q_1$.
Let $u=a_{\alpha_{*}}a_{\alpha_{*}+1}$.
Then
$\langle u\rangle$
is an infinite cyclic subgroup of $L$.
In particular, $u^2\ne 1$ in $L$.
By Lemma \ref{OG_LFP},
for any finite number of elements $g_1,\ldots,g_m$ and for $u$,
there exists $t>0$ such that
$\langle u, g_1,\ldots, g_m\rangle_{L}
*\langle W_t\rangle_{L}$,
and thus $L$ satisfies the $(\star)$
because $W_t\ne 1$ and $u^2\ne 1$,
as desired.

On the other hand,
if for each $x\in X$, $\sigma_{x}(W)\ne 0$,
say $\sigma_{t}(W)=\beta$ and $\sigma_{a}(W)=\gamma$,
then, replacing $t$ by $t^{\gamma}$ both in $X$ and in $R$,
we define $\widehat{G}$ by $\langle \widehat{X}\ |\ \widehat{R}\rangle$,
where $\widehat{X}=\{t^{\gamma}, a,b,\ldots, c\}$,
$\widehat{R}=(\widehat{W})^n$ 
and $\widehat{W}=\widehat{W}(t^{\gamma}, a,b,\ldots,c)$.
We set $\widehat{F}=\langle \widehat{X}\rangle$,
$N=\Nc_{F}(\widehat{R})$
and $\widehat{N}=\Nc_{\widehat{F}}(\widehat{R})$.
We note that $\widehat{F}$, $N$ and $\widehat{N}$
are subgroups of $F=\langle X\rangle$.
Clearly,
$\widehat{G}$ is isomorphic to $G$.
We may regard $\widehat{G}$
as a subgroup of $\langle X\ |\ \widehat{R}\rangle\simeq F/N$.
Since
$\widehat{F}/\widehat{N}\simeq \widehat{G}\simeq G$,
we identify $\widehat{G}$ with $\widehat{F}/\widehat{N}$
and it suffices to show that $\widehat{G}$ has a normal subgroup $\widehat{L}$
which satisfies the $(\star)$
and $\widehat{G}/\widehat{L}$ is cyclic.

Now,
let $M=\Nc_{F}(at^{\beta},b,\ldots,c)$.
Since $\sigma_{t}(W)=\beta$ and $\sigma_{a}(W)=\gamma$,
we see that
$$\sigma_{t}(\widehat{W})=\beta\gamma=\beta\sigma_{a}(W)
=\beta\sigma_{a}(\widehat{W}),$$
which implies $\widehat{W}\in M$ and thus $N\subsetneq M$.
Similarly to the previous case,
we have a presentation of 
$G_{\infty}=M/N$ as follows:
$$
G_{\infty}=\langle X_{\infty}\ |\ \widehat{R}_i,\ i\in \Zh\rangle
\mbox{ with } \widehat{R}_i=(\widehat{W}_i)^{n} (n>1),
$$
where $X_{\infty}=\{a_i,b_i,\ldots, c_i \ |\ i\in \Zh\}$,
$a_i=t^iat^{\beta-i},b_i=t^ibt^{-i},\ldots,c_i=t^ict^{-i}$,
and $\widehat{W}_i=t^i\widehat{W}t^{-i}$ $(i\in \Zh)$.
In this case,
$G_{\infty}$ is not a subgroup of $\widehat{G}$,
and therefore,
we let $\widehat{L}=(M\cap \widehat{F})/\widehat{N}$.
Then $\widehat{L}$ is a normal subgroup of $\widehat{G}$.
Since 
$\widehat{G}/\widehat{L}\simeq \widehat{F}/(M\cap \widehat{F})
\simeq \widehat{F}M/M\subsetneq F/M\simeq \langle t\rangle,$
$\widehat{G}/\widehat{L}$ is cyclic.
To finish the proof,
it remains to show that $\widehat{L}$ satisfies
the $(\star)$.

Since $\widehat{G}$
is isomorphically embedded into $F/N$, it is clear that $\widehat{F}\cap N=\widehat{N}$, so
$$\begin{array}{lll}
G_{\infty}=M/N &\supset (M\cap \widehat{F})N/N
&\simeq (M\cap \widehat{F})/(M\cap \widehat{F}\cap N)\\
&&= (M\cap \widehat{F})/(\widehat{F}\cap N)\\
&&=(M\cap \widehat{F})/\widehat{N}=\widehat{L}.\\
\end{array}$$
Hence we may assume that $\widehat{L}$ is a subgroup of $G_{\infty}$.
Let $g_1,\ldots,g_m$ $(m>0)$ be in $\widehat{L}$ with $g_i\ne 1$.
In case of $n>2$,
since $\widehat{L}\subsetneq G_{\infty}$,
by Lemma \ref{OG_LFP},
there exists $t>0$ such that
$\langle g_1,\ldots, g_m\rangle_{G_{\infty}}
*\langle \widehat{W}_t\rangle_{G_{\infty}}$.
We have then that 
$1\ne \widehat{W}_t\in \widehat{L}$ and $(\widehat{W}_t)^2\ne 0$
because $n>2$,
and so $\widehat{L}$ satisfies
the $(\star)$.
On the other hand, in case of $n=2$,
let $p>0$ be the maximum number such that either
$t^{p\gamma}$ or $t^{-p\gamma}$
is appeared in $\widehat{W}=\widehat{W}(t^{\gamma}, a,b,\ldots,c)$.
Set $v=t^{(p+1)\gamma}at^{-(p+1)\gamma}a^{-1}$
so that $v\in \widehat{F}$.
Moreover,
since $\sigma_t(v)=0$ and $\sigma_a(v)=0$,
we have $v\in M$.
That is, $v\in M\cap \widehat{F}$ and thus
the homomorphic image $\overline{v}$ of $v$ is contained in $\widehat{L}$.
Suppose that $\overline{v}^2=1$;
namely, $v^2\in \widehat{N}$.
In view of Lemma \ref{L_S} (1),
a reduced word $v^2$ 
contains a subword $S_0S_1S_0$
such that $S_0S_1$ is 
a cyclic shift of $\widehat{W}$
and $S_0$ contains all generators appeared in $\widehat{W}$.
Since only two letters $t$ and $a$ are appeared in $v^2$,
we have that  $\widehat{W}=\widehat{W}(t^{\gamma}, a)$.
Moreover, $S_0S_1S_0$
involves a subword of type $a^{\varepsilon_1}t^{q}a^{\varepsilon_2}$
with $|q|\leq |p\gamma|$, where $\varepsilon_i=\pm 1$.
However, since $|(p+1)\gamma|>|q|$,
there exists no such subword in $v^2$,
which implies a contradiction.
We have thus shown that $\overline{v}^2\ne 1$.
By Lemma \ref{OG_LFP},
for $g_1,\ldots,g_m$ and $\overline{v}$,
there exists $t>0$ such that
$\langle \overline{v}, g_1,\ldots, g_m\rangle_{G_{\infty}}
*\langle \widehat{W}_t\rangle_{G_{\infty}}$.
Since $1\ne \widehat{W}_t\in \widehat{L}$ and $\overline{v}^2\ne 1$,
we have thus proved that $\widehat{L}$ satisfies condition $(\star)$.
\end{proof}


\end{document}